\newcommand\scalemath[2]{\scalebox{#1}{\mbox{\ensuremath{\displaystyle #2}}}}
\theoremstyle{definition}
\newtheorem{Teo}{Theorem}[section]
\newtheorem{Coro}[Teo]{Corollary}
\newtheorem{Obs}[Teo]{Remark}
\newtheorem{Lema}[Teo]{Lemma}
\newtheorem{Def}[Teo]{Definition}
\newtheorem{Ej}[Teo]{Example}
\begin{document}

\title{Kripke-like models of Set Theory in Modal Residuated Logic.}

\author{Jose Moncayo\thanks{jrmoncayov@unal.edu.co} }
\author{Pedro H. Zambrano\thanks{phzambranor@unal.edu.co}}
\affil{Departamento de Matem\'aticas, Universidad Nacional de Colombia, AK 30 $\#$ 45-03 c\'odigo postal 111321, Bogota, Colombia.}
\date{\today}
\maketitle

\begin{abstract}
    We generalize Fitting's work on Intuitionistic Kripke models of Set Theory using Ono and Komori's Residuated Kripke models. Based on these models, we provide a generalization of the von Neumann hierarchy in the context of Modal Residuated Logic and prove a translation of formulas between it and a suited Heyting valued model. We also propose a notion of universe of constructible sets in Modal Residuated Logic and discuss some aspects of it.

\textbf{\small Keywords: Valued models, abstract logics, residuated lattices, Kripke models, constructible sets}\\

\textbf{\small MSC 2020 classification:} 03E70, 03B47, 03B60, 03B45, 06F07.

\end{abstract}

\section{Introduction}

When we tried to generalize the notion of a definable set (see~\cite{MoncayoZambrano2X2}) to the context of quantale-valued models, we found that the resulting classes of constructible sets are also \textbf{two valued}, and therefore are not suitable to study Residuated Logic. Hence, we instead focus on developing the notion of constructible sets in the realm of \textbf{Kripke models}, where these kind of problems are avoided. 

As it turns out, there is a precedent to the idea of considering constructible sets over Kripke models: Fitting \cite{Fitting1969} constructed several models of \textbf{Intuitionistic Set Theory} generalizing both the universes of von Neumann and of Gödel using \textbf{Intuitionistic Kripke models} and then he went on to show how these models can be used to obtain \textbf{classical} proofs of independence in Set Theory.

Thus, since we would like to generalize Fitting's Intuitionistic Kripke models of Set Theory, we would need first a notion of Kripke models for Residuated Set Theory. Ono and Komori \cite{OnoKomori} introduced the notion of semantics for substructural logics without contraction and exchange. These models, that we call \textbf{Residuated  Kripke models} (shortly, \textit{$R$-Kripke models}), generalize the notion of Intuitionistic Kripke models and then, following the ideas of Lano \cite{Lano1992a}, we further generalize these models to \textbf{Modal Residuated Kripke models} (shortly, \textit{$MR$-Kripke models}). The definition for the interpretation of the modality in our definition is original, and it allows a suitable translation between MR-Kripke models and Heyting-valued models. 

We define the model $\mathcal{V}^{\mathbb{P}^*}$, that generalizes the von Neumann hierarchy for Modal Residuated Logic, and we prove that there is a \textbf{Gödel–McKinsey–Tarski-like translation} between this model and a suited Heyting valued model $R^{\mathbb{H}}$. This translation is obtained by first constructing an ``isomorphism'' between $\mathcal{V}^{\mathbb{P}^*}$ and $R^{\mathbb{H}}$ and then proving how this result implies that if $\varphi$ is an $\mathcal{L}_{\in}$-sentence that is valid in $R^{\mathbb{H}}$, then $\Diamond \varphi$ is valid in $\mathcal{V}^{\mathbb{P}^*}$.


\subsection{Commutative integral Quantales}\label{completeresiduatedlattice}

Structures like Quantales (i.e. ordered monoids with a product that distributes over arbitrary supremums) have been studied at least since Ward and Dilworth's work on residuated lattices \cite{WardDilworth1938, Dilworth1939, Ward1938}, where their motivations were more algebraic, since they were studying the lattice of ideals in a ring: Given a ring $R$, the set of ideals of $R$, denoted as $Id(R)$, forms a complete lattice defining infimum and supremum as the intersection and sum of ideals, respectively. The monoid operation $\cdot$ on this lattice is given by multiplication of ideals, and the element $R$ in $Id(R)$ is the identity of this operation.

 But it was not until the work of Mulvey \cite{Mulvey1986}, where the term \textbf{Quantale} was coined as a combination of \textbf{``quantum''} and \textbf{``locale''} and proposed their use for studying \textbf{Quantum Logic} and non-commutative $C^*$-algebras.

 Our motivation for the study of Quantales is somewhat different. We are not interested in Quantales that are non-commutative - as was the case for Mulvey - but rather Quantales that are \textbf{not necessary idempotent}. We are interested in studying Quantales since they semantically capture both Intuitionistic and Fuzzy Logic, so we will focus on the study of \textbf{commutative integral Quantales}. This kind of structures is widely use in the field of \textbf{substructural logics} as semantical counterparts for those logics, particularly Residuated Logic.

\begin{Def}[Commutative integral Quantales]\label{residuatedlattice}

We say that $\mathbb{Q}=(\mathbb{Q},\land,\lor,\cdot,\rightarrow, 1, 0)$ is a \textit{commutative integral Quantale} (or equivalently a \textit{complete residuated lattice}) if:

\begin{enumerate}
    \item $(\mathbb{Q},\land,\lor, 1, 0)$ is a complete bounded lattice with 1 as maximum and 0 as minimum.
     \item $(\mathbb{Q}, \cdot, 1)$ is a commutative monoid.
     \item For all $x, y_i\in \mathbb{Q}$ with $i\in I$,
    \begin{center}
        $x\cdot\bigvee\limits_{i\in I}y_i=\bigvee\limits_{i\in I}(x\cdot y_i)$  
    \end{center}
    and $\rightarrow$ can be defined as $x\rightarrow y:=\bigvee\{z\in\mathbb{Q}: x\cdot z\leq y\}$ 
\end{enumerate}
\end{Def}

\begin{Def}
    We say that $\mathbb{Q}$ is \textit{idempotent} if $x\cdot x=x$ for all $x\in\mathbb{Q}$.
\end{Def}

\begin{Teo}[\cite{BuPi2014}, p. 2]\label{cuantales1}
    Let $\mathbb{Q}$ be a commutative integral Quantale and $x, y, z \in \mathbb{Q}$ with $i\in I$. Then:
    \begin{enumerate}
        \item $x\leq y$ if and only if $(x\rightarrow y)=1$.
        \item $x\cdot(x\rightarrow y)\leq y$.
        \item $(1\rightarrow y)=y$
        \item $0=x\cdot 0=0\cdot x$.
        \item $(0\rightarrow y)=1$ 
        \item If $x\leq y$, then $x\cdot z\leq y\cdot z$.
        \item  $x\cdot y\leq x\land y$.
        \item If $x\leq y$, then $y\rightarrow z\leq x\rightarrow z$.
        \item If $x\leq y$, then $z\rightarrow x\leq z\rightarrow y$.
        \item $(x\cdot y)\rightarrow z= x\rightarrow(y\rightarrow z)$.
    \end{enumerate}
\end{Teo}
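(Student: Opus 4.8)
The plan is to prove each of the ten items using only the defining properties of a commutative integral Quantale: the lattice structure, the commutative monoid with identity $1$, the infinite distributivity law, and the residuation adjunction $x\cdot z\leq y \iff z\leq (x\rightarrow y)$ that follows from the definition $x\rightarrow y:=\bigvee\{z:x\cdot z\leq y\}$. Let me think about which items require which tools.

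First I would establish the fundamental residuation adjunction as a lemma, since almost everything flows from it: from the definition, $x\cdot z\leq y$ for every $z$ in the set being joined, so by distributivity $x\cdot(x\rightarrow y)=x\cdot\bigvee\{z:x\cdot z\leq y\}=\bigvee\{x\cdot z:x\cdot z\leq y\}\leq y$, which immediately gives item (2). Conversely, if $x\cdot z\leq y$ then $z$ is in the set, so $z\leq(x\rightarrow y)$. This adjunction, together with the fact that $1$ is the top element (integrality) and the monoid identity, yields item (1): $x\leq y$ iff $1\cdot x\leq y$ iff $1\leq(x\rightarrow y)$ iff $(x\rightarrow y)=1$ since $1$ is the maximum. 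I would carry out (1) and (2) first as they are the backbone.

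Next I would dispatch the monotonicity and order facts in a natural order. Item (6) is immediate from distributivity: $x\leq y$ means $y=x\lor y$, so $y\cdot z=(x\lor y)\cdot z=(x\cdot z)\lor(y\cdot z)\geq x\cdot z$. Item (4) follows since $0$ is the minimum and $x\cdot 0\leq x\cdot 1=x$ is not enough—instead I use $x\cdot 0\leq x\cdot y$ trivially fails, so better: $0\leq x\cdot 0$ and $x\cdot 0=x\cdot\bigvee\emptyset=\bigvee\emptyset=0$ by distributivity over the empty join. Item (3) is $(1\rightarrow y)=y$, proved by the adjunction: $z\leq(1\rightarrow y)\iff 1\cdot z\leq y\iff z\leq y$, so the two elements have the same lower sets and are equal. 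Item (5) uses $0\cdot z=0\leq y$ for all $z$, so $(0\rightarrow y)=\bigvee\mathbb{Q}=1$. Item (7) combines integrality and item (6): $x\cdot y\leq x\cdot 1=x$ and $x\cdot y=y\cdot x\leq y\cdot 1=y$, hence $x\cdot y\leq x\land y$. Items (8) and (9) are the contravariant and covariant monotonicity of $\rightarrow$, both proved by the adjunction plus item (6): for (9), from $x\leq y$ and $z\cdot(z\rightarrow x)\leq x\leq y$ we get $(z\rightarrow x)\leq(z\rightarrow y)$; for (8), from $x\cdot(y\rightarrow z)\leq y\cdot(y\rightarrow z)\leq z$ we get $(y\rightarrow z)\leq(x\rightarrow z)$.

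Finally, item (10), the currying identity $(x\cdot y)\rightarrow z=x\rightarrow(y\rightarrow z)$, I would prove by showing both sides have the same lower set via the adjunction applied twice together with associativity and commutativity of the monoid: for any $w$, $w\leq(x\cdot y)\rightarrow z\iff (x\cdot y)\cdot w\leq z\iff x\cdot(y\cdot w)\leq z\iff (y\cdot w)\leq(x\rightarrow z)$—wait, I must be careful with the variable order, so more directly $w\leq x\rightarrow(y\rightarrow z)\iff x\cdot w\leq(y\rightarrow z)\iff y\cdot(x\cdot w)\leq z\iff (x\cdot y)\cdot w\leq z$ using associativity and commutativity, matching the first chain. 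Since the two elements dominate exactly the same $w$, they are equal. The main obstacle, such as it is, will be keeping the bookkeeping of the adjunction direction and the monoid commutativity straight in items (8), (9), and (10); none of the steps is deep, but item (10) requires invoking the adjunction at two nested levels and relies essentially on associativity, so that is where I would be most careful to write the equivalences in the correct order.
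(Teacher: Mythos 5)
Your proposal is correct. The paper itself offers no proof of this theorem --- it is quoted from Bu\c{s}neag and Piciu --- so there is nothing internal to compare against; your derivation via the residuation adjunction $x\cdot z\leq y \iff z\leq (x\rightarrow y)$ is the standard one and all ten items check out. One small point of bookkeeping: you only state explicitly the direction ``$x\cdot z\leq y$ implies $z\leq(x\rightarrow y)$,'' yet items (1), (3) and (10) also use the converse direction of the adjunction; that converse follows from your item (2) together with the monotonicity of the product in item (6), both of which you do prove from the distributivity axiom alone, so the argument is complete --- it would just read more cleanly if you recorded the full two-sided adjunction as your preliminary lemma before invoking it as an ``iff.''
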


\begin{Def}\label{otheroperationsonQ}
    Let $\mathbb{Q}$ be a commutative integral Quantale and $x, y\in \mathbb{Q}$. We define
    \begin{enumerate}
        \item $\sim x:=x\rightarrow 0$ (negation),
        \item $x\equiv y:= (x\rightarrow y)\cdot(y\rightarrow x)$ (equivalence),
        \item $x^0=1$ and $x^{n+1}=x\cdot x^n$ for $x\in\mathbb{N}$ (exponentiation).
    \end{enumerate}
\end{Def}

\begin{Obs}
    Throughout this document, we make a distinction between the negation (equivalence) in a quantale, denoted by $\sim$ ($\equiv$), and the negation (equivalence) in a Heyting algebra, denoted by $\lnot$ ($\leftrightarrow$). 
\end{Obs}

\begin{Teo}[\cite{Moncayo2023}, Theorem 1.2.36]\label{cuantales1.5}
    Let $\mathbb{Q}$ be a commutative integral Quantale and $x, y, y_i, x_i \in \mathbb{Q}$ with $i\in I$. Then:
    \begin{enumerate}
        \item $x\cdot \left(\bigwedge\limits_{i\in I} y_i\right)\leq \bigwedge\limits_{i\in I}(x\cdot y_i)$
        \item $x\rightarrow \left(\bigwedge\limits_{i\in I} y_i\right)= \bigwedge\limits_{i\in I}(x\rightarrow y_i)$.
        \item $\left(\bigvee\limits_{i\in I} x_i\right)\rightarrow y=\bigwedge\limits_{i\in I}(x_i\rightarrow y)$.
        \item $\sim\left(\bigvee\limits_{i\in I} x_i\right)=\bigwedge\limits_{i\in I}(\sim x_i)$.
    \end{enumerate}
\end{Teo}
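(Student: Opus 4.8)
The plan is to derive all four clauses from a single adjunction: for all $a,b,c\in\mathbb{Q}$ one has $a\cdot b\leq c$ if and only if $b\leq a\rightarrow c$. This is immediate from the definition $a\rightarrow c:=\bigvee\{z:a\cdot z\leq c\}$. Indeed, if $a\cdot b\leq c$ then $b$ belongs to that set, so $b\leq a\rightarrow c$; conversely, if $b\leq a\rightarrow c$ then by commutativity and monotonicity of the product (Theorem~\ref{cuantales1}(6)) together with Theorem~\ref{cuantales1}(2) we get $a\cdot b\leq a\cdot(a\rightarrow c)\leq c$. I would record this equivalence first, since the two non-trivial inequalities below both rest on it.

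For (1), fix $j\in I$. From $\bigwedge_{i\in I}y_i\leq y_j$ and monotonicity of the product (Theorem~\ref{cuantales1}(6)) I obtain $x\cdot\bigwedge_{i\in I}y_i\leq x\cdot y_j$; as $j$ is arbitrary, the left-hand side is a lower bound of $\{x\cdot y_j\}_{j\in I}$ and hence lies below their infimum. For (2) I prove both inequalities. The inequality $x\rightarrow\bigwedge_{i\in I}y_i\leq\bigwedge_{i\in I}(x\rightarrow y_i)$ follows from monotonicity of $\rightarrow$ in its second argument (Theorem~\ref{cuantales1}(9)) applied to each $\bigwedge_{i\in I}y_i\leq y_j$. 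For the reverse inequality I invoke the adjunction: it suffices to show $x\cdot\bigwedge_{i\in I}(x\rightarrow y_i)\leq\bigwedge_{i\in I}y_i$, and for each fixed index this follows from clause (1) together with $x\cdot(x\rightarrow y_i)\leq y_i$ (Theorem~\ref{cuantales1}(2)).

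For (3) the pattern is dual. The inequality $\left(\bigvee_{i\in I}x_i\right)\rightarrow y\leq\bigwedge_{i\in I}(x_i\rightarrow y)$ comes from antitonicity of $\rightarrow$ in its first argument (Theorem~\ref{cuantales1}(8)) applied to each $x_j\leq\bigvee_{i\in I}x_i$. For the reverse inequality I again use the adjunction, reducing to $\left(\bigvee_{i\in I}x_i\right)\cdot\bigwedge_{j\in I}(x_j\rightarrow y)\leq y$; here I expand the left-hand side using the quantale distributivity axiom (Definition~\ref{residuatedlattice}(3)) as $\bigvee_{i\in I}\bigl(x_i\cdot\bigwedge_{j\in I}(x_j\rightarrow y)\bigr)$ and bound each term by $x_i\cdot(x_i\rightarrow y)\leq y$. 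Finally, (4) is simply the instance $y=0$ of (3): by the definition $\sim a=a\rightarrow 0$ (Definition~\ref{otheroperationsonQ}(1)) one reads $\sim\left(\bigvee_{i\in I}x_i\right)=\left(\bigvee_{i\in I}x_i\right)\rightarrow 0=\bigwedge_{i\in I}(x_i\rightarrow 0)=\bigwedge_{i\in I}(\sim x_i)$.

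The only genuinely delicate point is the reverse inequality in (3), where I must interchange the product with the arbitrary supremum; this is exactly where the defining distributivity of the quantale (as opposed to mere monotonicity) is indispensable, and it is the step that would fail in a general residuated poset. Everything else is bookkeeping with the adjunction and the monotonicity/antitonicity clauses already collected in Theorem~\ref{cuantales1}.
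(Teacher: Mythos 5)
Your argument is correct. Note that the paper itself gives no proof of this theorem --- it is quoted from \cite{Moncayo2023} (Theorem 1.2.36) without an argument --- so there is nothing to compare against; your proposal simply supplies the standard proof. The organization around the residuation adjunction ($a\cdot b\leq c$ iff $b\leq a\rightarrow c$) is exactly the right skeleton: clause (1) is monotonicity of the product, the nontrivial halves of (2) and (3) reduce via the adjunction to termwise applications of $x\cdot(x\rightarrow y)\leq y$, the distributivity axiom of the quantale is correctly identified as the one indispensable ingredient in the reverse inequality of (3), and (4) is indeed the specialization $y=0$ of (3). No gaps.
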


\begin{Teo}[\cite{BuPi2014}, p. 2]\label{cuantales2}
    Let $\mathbb{Q}=(\mathbb{Q},\land,\lor,\cdot,\rightarrow, 1, 0)$ be a commutative integral Quantale and let $x, y, y_i, x_i \in \mathbb{Q}$ for $i\in I$. Then:
    \begin{enumerate}
        \item $x\cdot (\sim x)=0$, but in general it is not true that $x\lor \sim x=1$.
         \item $x\leq (\sim \sim x)$, but in general it is not true that $\sim \sim x\leq x$.
         \item $\sim (x\lor y)=(\sim x)\cdot (\sim y)$ (De Morgan's Law), but it is not generally true that $\sim (x\cdot y)=(\sim x)\lor (\sim y)$.
         \item If $x\leq y$, then $(\sim y)\leq (\sim x)$ and $(\sim\sim x)\leq (\sim\sim y)$.
         \item $\sim 0=1$ and $\sim 1=0$.
        \item $x=y$ if and only if $(x\equiv y)=1$.
        \item $(\sim\sim x)\cdot (\sim\sim y)\leq (\sim\sim(x\cdot y))$.
        \item $\sim\sim\sim x=\sim x$.
    \end{enumerate}
\end{Teo}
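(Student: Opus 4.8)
The common engine for every item is the residuation adjunction built into the definition of $\rightarrow$, namely $a\cdot b\leq c$ if and only if $a\leq b\rightarrow c$ (equivalently $b\leq a\rightarrow c$, by commutativity), together with the catalogue already recorded in Theorems~\ref{cuantales1} and~\ref{cuantales1.5}. The plan is to dispatch items (1)--(6) and (8) as short deductions from these, and to isolate item (7) as the one requiring a genuine auxiliary lemma. The three ``not in general'' clauses are handled separately by exhibiting witnesses.

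For the routine items: item (1) is immediate, since $x\cdot\sim x=x\cdot(x\rightarrow 0)\leq 0$ by Theorem~\ref{cuantales1}(2) and $0$ is the minimum, forcing equality. Item (5) reads $\sim 0=0\rightarrow 0=1$ and $\sim 1=1\rightarrow 0=0$ straight from Theorem~\ref{cuantales1}(5) and~(3). Item (2), $x\leq\sim\sim x=(x\rightarrow 0)\rightarrow 0$, is exactly the adjunction applied to the inequality $x\cdot(x\rightarrow 0)\leq 0$ of item~(1). Item (4) is Theorem~\ref{cuantales1}(8) specialised to $z=0$ (giving antitonicity of $\sim$), iterated once to get monotonicity of $\sim\sim$. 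Item (8), $\sim\sim\sim x=\sim x$, follows by combining item~(2) applied to $\sim x$ (for $\sim x\leq\sim\sim\sim x$) with item~(2) for $x$ pushed through the antitone $\sim$ (for $\sim\sim\sim x\leq\sim x$). Item (6) uses $x\rightarrow x=1$ from Theorem~\ref{cuantales1}(1) in the forward direction, and for the converse that $\cdot\leq\land$ (Theorem~\ref{cuantales1}(7)) forces both conjuncts of $(x\rightarrow y)\cdot(y\rightarrow x)=1$ to equal $1$, whence $x\leq y$ and $y\leq x$. The De Morgan identity in item~(3) is the two-element instance of Theorem~\ref{cuantales1.5}(4).

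The one substantive step is item (7), $\sim\sim x\cdot\sim\sim y\leq\sim\sim(x\cdot y)$. The plan is to first prove the auxiliary inequality
\[
a\cdot\sim\sim b\leq\sim\sim(a\cdot b)\qquad\text{for all }a,b.
\]
By the adjunction this is equivalent to $a\cdot\sim\sim b\cdot\sim(ab)\leq 0$; rewriting $\sim(ab)=a\rightarrow\sim b$ via Theorem~\ref{cuantales1}(10) and applying $a\cdot(a\rightarrow\sim b)\leq\sim b$ from Theorem~\ref{cuantales1}(2), monotonicity of $\cdot$ (Theorem~\ref{cuantales1}(6)) collapses the left side below $\sim\sim b\cdot\sim b$, which is $0$ by item~(1) applied to $\sim b$. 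With the lemma in hand I would apply it once as $\sim\sim x\cdot\sim\sim y\leq\sim\sim(\sim\sim x\cdot y)$ and once (using commutativity) as $\sim\sim x\cdot y\leq\sim\sim(xy)$; feeding the second through the monotone, idempotent operator $\sim\sim$ (idempotence being $\sim\sim\sim\sim=\sim\sim$, a consequence of item~(8)) yields $\sim\sim(\sim\sim x\cdot y)\leq\sim\sim(xy)$, and chaining the two inequalities closes the argument. I expect this lemma-plus-iteration to be the main obstacle, since it is the only place where the non-idempotence of $\cdot$ genuinely bites and a naive one-line manipulation fails.

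Finally, the three ``not in general'' clauses require explicit witnesses rather than proofs, and the plan is to evaluate them in a standard commutative integral Quantale on the real unit interval $[0,1]$. The G\"odel structure ($\cdot=\land$, with $\sim x=0$ for $x>0$) refutes $x\lor\sim x=1$ and $\sim\sim x\leq x$ at $x=\tfrac12$, where $x\lor\sim x=\tfrac12$ while $\sim\sim x=1$; the \L{}ukasiewicz structure ($x\cdot y=\max(0,x+y-1)$) refutes $\sim(x\cdot y)=\sim x\lor\sim y$ at $x=y=\tfrac12$, where the left side is $1$ and the right side is $\tfrac12$.
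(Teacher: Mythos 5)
The paper does not actually prove Theorem~\ref{cuantales2}: it is imported verbatim from the cited reference, so there is no in-paper argument to compare against. Judged on its own terms, your proposal is correct and complete for items (1), (2), (4), (5), (6), (8), for the auxiliary lemma $a\cdot\sim\sim b\leq\sim\sim(a\cdot b)$ and its iteration in item (7), and for the three counterexample clauses; item (7) in particular is handled exactly the way one must handle it, and your Gödel and \L{}ukasiewicz witnesses compute as claimed.

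There is, however, one genuine gap, in item (3). You justify the identity by citing Theorem~\ref{cuantales1.5}(4), but the two-element instance of that lemma is $\sim(x\lor y)=(\sim x)\wedge(\sim y)$, whereas the statement you are asked to prove reads $\sim(x\lor y)=(\sim x)\cdot(\sim y)$. Since $\cdot\leq\wedge$ with strict inequality possible, these are not the same claim, and the $\cdot$ form is in fact false in a general commutative integral Quantale: in your own \L{}ukasiewicz witness with $x=y=\tfrac12$ one gets $\sim(x\lor y)=\sim\tfrac12=\tfrac12$ while $(\sim x)\cdot(\sim y)=\max(0,\tfrac12+\tfrac12-1)=0$. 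So the step ``item (3) is the two-element instance of Theorem~\ref{cuantales1.5}(4)'' does not establish the literal statement; what it establishes is the (true) $\wedge$ version, and the $\cdot$ version as printed is presumably a typo in the theorem. You should either prove the $\wedge$ form and flag the discrepancy explicitly, or record the same \L{}ukasiewicz computation as a refutation of the $\cdot$ form, rather than silently citing a lemma that proves a different identity.
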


If we consider a commutative integral Quantale that is also \textbf{ idempotent}, then the structure  collapses to a Heyting algebra.
For this reason, we are interested in studying commutative integral Quantales that are not necessarily idempotent.

\section{Residuated Logic}\label{residuatedlogic}

This logic was introduced by Ulrich Höhle \cite{Hohle1994} under the name {\it Monoidal Logic} in order to present a general framework for the study of {\it Fuzzy Logics} based on t-norms, {\it Intuitionistic Logic} and {\it Girard’s Linear Logic}. In his article, Höhle considers residuated integral commutative 1-monoids (i.e. complete residuated lattices in our terms) as a set of truth values of his logic, presents a {\it completeness} and {\it soundness theorem}, and shows some interactions of it with the other logics mentioned.

Throughout this work, we will call Höhle’s {\it Monoidal Logic} as {\it Residuated Logic}, following Lano’s notation \cite{Lano1992a} in his study of Residuated Logic and Fuzzy sets, where this logic is studied in its modal variant and is applied in the context of set-theoretic models valued on Residuated lattices. \\
    
    \begin{Def}[Logical symbols, \cite{Lano1992a}]\label{StrongConjunction}
        The fundamental difference between Classical (or Intuitionistic) Logic and Fuzzy (or Residuated) Logic is that we consider additional logical symbols, namely, in Residuated Logic, we consider two types of \textbf{conjunctions}, a \textbf{weak conjunction} ($\land$) and a \textbf{strong conjunction} ($\&$). 
        
        The following symbols are definable, using the usual logical symbols:
        \begin{enumerate}
            \item Equivalence $\varphi \equiv \psi:=(\varphi \rightarrow \psi) \& (\psi \rightarrow \varphi)$,
            \item Negation $\sim \varphi := \varphi \rightarrow \bot$,
            \item Tautology $\top:=\sim \bot$.
        \end{enumerate}
    \end{Def}
    \begin{Def}[Propositional formulas, \cite{Lano1992a}]\label{residuatedformulas}
        The construction of formulas is done by recursion on a manner analogous to how it is done in Classical Propositional Logic. To differentiate these formulas from formulas in Classical (or Intuitionistic)  Logic, we call them \textit{Residuated (Propositional) formulas}, or $R$-formulas for short. 
    \end{Def}

The language of Modal Residuated Logic has the same symbols as the Residuated Logic together with an unary connective of \textit{possibility} $\Diamond$. The idea is to interpret some \textbf{quantic nucleus} as the \textbf{possibility} operator when we deal with the semantics. The formulas in a given first-order language $\mathcal{L}$ are constructed by recursion in the same way as for the residuated case and we  call them formulas \textit{Modal Residuated $\mathcal{L}$-formulas}, or \textit{$MR-\mathcal{L}$-formulas}, for short.

\subsection{Filters and nuclei on Quantales}    

Throughout this subsection, $\mathbb{Q}$ denotes a \textbf{commutative integral quantale}. The goal of this subsection is to generalize well known results for filters in Heyting algebras in the context of commutative integral quantale. That is, we use filters on Quantales to construct new Quantales (or even Heyting algebras or Boolean algebras) related to the original one in some crucial ways. These applications will be studied in subsection \ref{ResiduatedKripkemodelsofsettheory}

\begin{Def}[\cite{BuPi2014}, Definition 2]\index{filter! on a quantale}
    A nonempty subset $\mathcal{F}$ of $\mathbb{Q}$ is said to be a \textit{filter} if for every $x, y\in \mathbb{Q}$
    \begin{enumerate}
        \item If $x\leq y$ and $x\in \mathcal{F}$, then $y\in \mathcal{F}$.
        \item If $x, y\in \mathcal{F}$, then $x\cdot y\in \mathcal{F}$.
    \end{enumerate}
    A filter $\mathcal{F}$ is called \textit{proper} if $\mathcal{F}\not=\mathbb{Q}$, that is, if $0\not \in \mathcal{F}$.
\end{Def}

Consider a filter $\mathcal{F}$ on $\mathbb{Q}$. The relation $\approx_{\mathcal{F}}$ defined on $\mathbb{Q}$ by 
\begin{center}
    $x\approx_{\mathcal{F}} y$ if and only if both $x\rightarrow y$ and $y\rightarrow x\in \mathcal{F}$ 
\end{center}
is an equivalence relation on $\mathbb{Q}$.

\begin{Def}[\cite{BuPi2014}, pp. 2 and 3]\index{quotient! of a residuated lattice}\label{Q/F}
    The quotient algebra $\mathbb{Q}/_{\approx_{\mathcal{F}}}$, denoted by $\mathbb{Q}/\mathcal{F}$, becomes a complete residuated lattice in a natural way, with the operations induced from those of $\mathbb{Q}$ as follows: For $x\in \mathbb{Q}$, we denote by $|x|:=|x|_{\mathcal{F}}$ the congruence class of $x$ modulo $\approx_{\mathcal{F}}$ and we define:
    \begin{center}
        $|A|\land |B|:=|A\land B|$,\\
        $|A|\lor |B|:=|A\lor B|$,\\
        $|A|\rightarrow |B|:=|A\rightarrow B|$,\\
        $|\sim A|:=\sim|A|$.\\
    \end{center}
    The order relation on $\mathbb{Q}/\mathcal{F}$ is defined by
    \begin{center}
        $|x|\leq |y|$, if and only if, $x\rightarrow y\in \mathcal{F}$
    \end{center}
\end{Def}

But with respect to quantic nucleus, our goal is to capture, as best as we can, all the properties of the double negation $\lnot \lnot$ so that we can replicate Fitting's results \cite{Fitting1969} in the context of Residuated Logics.

\begin{Def}[\cite{Rosenthal1990}, Definition 1.1.2]\label{clousureoperatior}\index{closure operator}
    Let $(\mathbb{P}, \leq )$ be a poset. We say that a function $j:\mathbb{P}\rightarrow \mathbb{P}$ is a \textit{closure operator} if for every $x, y\in \mathbb{P}$, we have the following:
    \begin{enumerate}
        \item Expansivity: $x\leq j(x)$.
        \item Idempotency with respect to compositions: $j(j(x))=j(x)$.
        \item Monotonicity: If $x\leq y$, then $j(x)\leq j(y)$.
    \end{enumerate}
\end{Def}

\begin{Def}[\cite{Rosenthal1990}, Definition 3.1.1]\label{quanticnucleus}\index{quantic nucleus}
    We  say that a closure operator $\gamma:\mathbb{Q}\rightarrow \mathbb{Q}$ is a \textit{quantic nucleus} if for every $x\in\mathbb{Q}$
    \begin{center}
        $\gamma(x)\cdot \gamma(y)\leq \gamma(x\cdot y)$ 
    \end{center}
\end{Def}

\begin{Lema}[\cite{Rosenthal1990}, p. 29]\label{quanticnucleusproperty1}
    If $\gamma:\mathbb{Q}\rightarrow \mathbb{Q}$ is a quantic nucleus, then for every $x, y\in \mathbb{Q}$
    \begin{center}
        $\gamma(x\cdot y)=\gamma(\gamma (x)\cdot \gamma(y))=\gamma(\gamma (x)\cdot y)=\gamma(x\cdot \gamma(y))$
    \end{center}
\end{Lema}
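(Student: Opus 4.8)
The plan is to establish each of the three equalities by proving two inequalities, squeezing $\gamma(x\cdot y)$ between the candidate expressions: the ``upward'' inequality comes from expansivity, and the ``downward'' inequality comes from the defining inequality of a quantic nucleus combined with idempotency. Throughout I would freely use that the product is monotone in each argument (Theorem \ref{cuantales1}, item 6, together with commutativity of the monoid) and the three defining properties of a closure operator from Definition \ref{clousureoperatior}.

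First I would prove $\gamma(x\cdot y)=\gamma(\gamma(x)\cdot\gamma(y))$. For the inequality $\leq$, expansivity gives $x\leq\gamma(x)$ and $y\leq\gamma(y)$, so monotonicity of the product yields $x\cdot y\leq\gamma(x)\cdot\gamma(y)$; applying the monotone map $\gamma$ gives $\gamma(x\cdot y)\leq\gamma(\gamma(x)\cdot\gamma(y))$. For the reverse inequality, the quantic-nucleus condition gives $\gamma(x)\cdot\gamma(y)\leq\gamma(x\cdot y)$; applying $\gamma$ and then invoking idempotency, $\gamma(\gamma(x\cdot y))=\gamma(x\cdot y)$, yields $\gamma(\gamma(x)\cdot\gamma(y))\leq\gamma(x\cdot y)$.

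Next I would handle the two ``mixed'' terms. For $\gamma(x\cdot y)=\gamma(\gamma(x)\cdot y)$: expansivity $x\leq\gamma(x)$ gives $x\cdot y\leq\gamma(x)\cdot y$, hence $\gamma(x\cdot y)\leq\gamma(\gamma(x)\cdot y)$ by monotonicity; conversely $y\leq\gamma(y)$ gives $\gamma(x)\cdot y\leq\gamma(x)\cdot\gamma(y)$, and applying $\gamma$ together with the equality already established yields $\gamma(\gamma(x)\cdot y)\leq\gamma(\gamma(x)\cdot\gamma(y))=\gamma(x\cdot y)$. The remaining equality $\gamma(x\cdot y)=\gamma(x\cdot\gamma(y))$ then follows by the identical argument with the roles of $x$ and $y$ exchanged, using commutativity.

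There is no genuine obstacle here; the single point requiring attention is to invoke idempotency at exactly the step where $\gamma$ is applied to the quantic-nucleus inequality, since that is the only place where the closure structure beyond plain monotonicity is actually used — it is what collapses the doubled application of $\gamma$ back down to a single one.
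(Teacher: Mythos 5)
Your proof is correct and is the standard argument for this fact; each inequality is justified by exactly the right property (expansivity plus monotonicity of the product for one direction, the nucleus inequality followed by idempotency for the other). The paper itself gives no proof of this lemma — it is quoted from Rosenthal \cite{Rosenthal1990} — and your argument matches the standard one found there.
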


\begin{Lema}\label{charquanticnucleus}[\cite{Rosenthal1990}, Proposition 3.1.1]
    A function $\gamma:\mathbb{Q}\rightarrow \mathbb{Q}$ is a quantic nucleus, if and only if, for every $x, y\in\mathbb{Q}$, $\gamma(x)\rightarrow\gamma(y)=x\rightarrow \gamma(y)$.
\end{Lema}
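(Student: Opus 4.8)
The statement is a biconditional, so the plan is to establish the two implications separately, relying throughout on the residuation adjunction $a\cdot z\le b \iff z\le a\to b$ coming from Definition \ref{residuatedlattice}, together with the basic arithmetic collected in Theorem \ref{cuantales1}.

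For the forward direction (a quantic nucleus satisfies $\gamma(x)\to\gamma(y)=x\to\gamma(y)$), I would prove two inequalities. The inequality $\gamma(x)\to\gamma(y)\le x\to\gamma(y)$ is immediate from expansivity $x\le\gamma(x)$ together with antitonicity of $\to$ in its first argument (Theorem \ref{cuantales1}(8)). For the reverse inequality I set $z:=x\to\gamma(y)$, so that $x\cdot z\le\gamma(y)$ by Theorem \ref{cuantales1}(2), and then upgrade this to $\gamma(x)\cdot z\le\gamma(y)$ via the chain $\gamma(x)\cdot z\le\gamma(x)\cdot\gamma(z)\le\gamma(x\cdot z)\le\gamma(\gamma(y))=\gamma(y)$, which uses monotonicity of the product (Theorem \ref{cuantales1}(6)), the nucleus inequality, monotonicity of $\gamma$, and finally idempotency. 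Re-applying the adjunction gives $z\le\gamma(x)\to\gamma(y)$, i.e. $x\to\gamma(y)\le\gamma(x)\to\gamma(y)$, and the two inequalities yield equality.

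For the converse I would assume the identity $\gamma(x)\to\gamma(y)=x\to\gamma(y)$ for all $x,y$ and first recover that $\gamma$ is a closure operator, using Theorem \ref{cuantales1}(1) to convert each order assertion into an equation of the form ``$\cdots\to\cdots=1$''. Expansivity follows by taking $y=x$: $x\to\gamma(x)=\gamma(x)\to\gamma(x)=1$. Monotonicity of $\gamma$ follows because, for $x\le y$, we get $\gamma(x)\to\gamma(y)=x\to\gamma(y)=1$, the last equality holding since $x\le y\le\gamma(y)$ by expansivity. Idempotency follows by instantiating the identity at $\gamma(x)$ and $x$, giving $\gamma(\gamma(x))\to\gamma(x)=\gamma(x)\to\gamma(x)=1$, hence $\gamma(\gamma(x))\le\gamma(x)$, while expansivity supplies the reverse inclusion.

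It then remains to deduce the nucleus inequality $\gamma(x)\cdot\gamma(y)\le\gamma(x\cdot y)$, and I expect this to be the main obstacle, since it must be extracted from the single hypothesis by a two-step bootstrapping. First, from $x\cdot y\le\gamma(x\cdot y)$ and the adjunction I obtain $y\le x\to\gamma(x\cdot y)=\gamma(x)\to\gamma(x\cdot y)$ (the hypothesis applied with second argument $x\cdot y$), whence $\gamma(x)\cdot y\le\gamma(x\cdot y)$; by commutativity this ``half-strength'' fact holds symmetrically in both variables. Feeding it into itself then finishes the job: $\gamma(x)\cdot\gamma(y)\le\gamma(x\cdot\gamma(y))$ by one instance, while $x\cdot\gamma(y)=\gamma(y)\cdot x\le\gamma(x\cdot y)$ by another, so monotonicity and idempotency of $\gamma$ give $\gamma(x\cdot\gamma(y))\le\gamma(\gamma(x\cdot y))=\gamma(x\cdot y)$, and therefore $\gamma(x)\cdot\gamma(y)\le\gamma(x\cdot y)$. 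This completes the converse and hence the equivalence.
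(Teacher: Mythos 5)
Your proof is correct. Note that the paper itself gives no proof of this lemma; it is cited verbatim from Rosenthal's \emph{Quantales and their applications} (Proposition 3.1.1), so there is no in-text argument to compare against. Your forward direction is the standard one (antitonicity of $\rightarrow$ in its first argument for one inequality, and the adjunction plus the nucleus inequality and idempotency for the other), and your converse correctly recovers expansivity, monotonicity and idempotency from the identity via Theorem \ref{cuantales1}(1). The only genuinely delicate point is the recovery of $\gamma(x)\cdot\gamma(y)\leq\gamma(x\cdot y)$, and your two-step bootstrap --- first deriving the one-sided estimate $\gamma(x)\cdot y\leq\gamma(x\cdot y)$ from the hypothesis applied to $x\to\gamma(x\cdot y)$, then feeding it into itself using commutativity, monotonicity and idempotency --- is exactly the right move and is valid in the commutative setting the paper works in. The argument is a complete, self-contained proof of the cited result.
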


\begin{Teo}[\cite{Rosenthal1990}, Theorem 3.1.1 + Lemma 3.2.1 + Lemma 3.2.2 ]\label{fixedpointsonQ}
    Let $\gamma:\mathbb{Q}\rightarrow \mathbb{Q}$ be a quantic nucleus on $\mathbb{Q}$. The set of fixed points of $\gamma$
    \begin{center}
        $\mathbb{Q}_{\gamma}:=\{x\in\mathbb{Q}:\gamma(x)=x\}$
    \end{center}
    is a commutative integral Quantale with the order inherited from $\mathbb{Q}$ and the product 
    \begin{center}
        $x\cdot_{\gamma} y=\gamma(x\cdot y)$.        
    \end{center}
    Furthermore, if $\bigvee\limits^{\gamma}$ and $\bigwedge\limits^{\gamma}$ represent the supremums and infimums calculated in $\mathbb{Q}_{\gamma}$, respectively, then for every $x_i\in \mathbb{Q}_{\gamma}$ with $i\in I$
      \begin{center}
          $\bigvee\limits^{\gamma}_{i\in I}x_i=\gamma\left(\bigvee\limits_{i\in I}x_i\right)$\\
          $\bigwedge\limits^{\gamma}_{i\in I}x_i=\bigwedge\limits_{i\in I}x_i$
      \end{center}      
\end{Teo}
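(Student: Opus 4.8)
The plan is to verify directly the three conditions of Definition~\ref{residuatedlattice} for the triple $(\mathbb{Q}_{\gamma}, \cdot_{\gamma}, 1)$ equipped with the inherited order, handling the complete lattice structure, the commutative monoid structure, and the distributivity law in turn, and only afterwards recovering the residuation. I would begin with the lattice structure, proving the two displayed formulas along the way. For meets, given $x_i\in\mathbb{Q}_{\gamma}$, expansivity gives $\bigwedge_i x_i\leq\gamma(\bigwedge_i x_i)$, while applying monotonicity to $\bigwedge_i x_i\leq x_j$ yields $\gamma(\bigwedge_i x_i)\leq\gamma(x_j)=x_j$ for every $j$, so $\gamma(\bigwedge_i x_i)=\bigwedge_i x_i$; this shows $\mathbb{Q}_{\gamma}$ is closed under meets and that $\bigwedge^{\gamma}_i x_i=\bigwedge_i x_i$. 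For joins, I would check that $\gamma(\bigvee_i x_i)$ is the least upper bound of $\{x_i\}$ inside $\mathbb{Q}_{\gamma}$: it is a fixed point by idempotency, it dominates each $x_j$ since $x_j\leq\bigvee_i x_i\leq\gamma(\bigvee_i x_i)$, and any upper bound $y\in\mathbb{Q}_{\gamma}$ satisfies $\bigvee_i x_i\leq y$, hence $\gamma(\bigvee_i x_i)\leq\gamma(y)=y$; thus $\bigvee^{\gamma}_i x_i=\gamma(\bigvee_i x_i)$. Having all meets makes $\mathbb{Q}_{\gamma}$ a complete lattice, with top $1$ (a fixed point because $1\leq\gamma(1)\leq 1$) and bottom $\gamma(0)$, which serves as the zero of $\mathbb{Q}_{\gamma}$.

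Next I would verify that $(\mathbb{Q}_{\gamma}, \cdot_{\gamma}, 1)$ is a commutative monoid. Commutativity is inherited from $\cdot$, and $1$ is a unit since $x\cdot_{\gamma}1=\gamma(x\cdot 1)=\gamma(x)=x$ for $x\in\mathbb{Q}_{\gamma}$. Associativity is where Lemma~\ref{quanticnucleusproperty1} is essential: unfolding the definition gives $(x\cdot_{\gamma}y)\cdot_{\gamma}z=\gamma(\gamma(x\cdot y)\cdot z)$ and $x\cdot_{\gamma}(y\cdot_{\gamma}z)=\gamma(x\cdot\gamma(y\cdot z))$, and the identities $\gamma(\gamma(a)\cdot b)=\gamma(a\cdot b)=\gamma(a\cdot\gamma(b))$ collapse both sides to $\gamma((x\cdot y)\cdot z)=\gamma(x\cdot(y\cdot z))$, which coincide by associativity of $\cdot$ in $\mathbb{Q}$.

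The heart of the argument, and the step I expect to be the main obstacle, is the distributivity of $\cdot_{\gamma}$ over $\bigvee^{\gamma}$. The key auxiliary fact is that $\gamma$ commutes with joins up to closure, namely $\gamma(\bigvee_i a_i)=\gamma(\bigvee_i\gamma(a_i))$, which I would establish by a two-sided inequality: expansivity gives $\bigvee_i a_i\leq\bigvee_i\gamma(a_i)$, while monotonicity on $a_i\leq\bigvee_j a_j$ gives $\gamma(a_i)\leq\gamma(\bigvee_j a_j)$, hence $\bigvee_i\gamma(a_i)\leq\gamma(\bigvee_j a_j)$, and applying $\gamma$ with idempotency closes the loop. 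With this in hand, for $x, y_i\in\mathbb{Q}_{\gamma}$ the left-hand side becomes $x\cdot_{\gamma}\bigvee^{\gamma}_i y_i=\gamma(x\cdot\gamma(\bigvee_i y_i))=\gamma(x\cdot\bigvee_i y_i)=\gamma(\bigvee_i(x\cdot y_i))$ by Lemma~\ref{quanticnucleusproperty1} and distributivity in $\mathbb{Q}$, while the right-hand side is $\bigvee^{\gamma}_i(x\cdot_{\gamma}y_i)=\gamma(\bigvee_i\gamma(x\cdot y_i))$, and the two agree by the auxiliary fact.

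Finally I would recover the residuation. Since $\mathbb{Q}_{\gamma}$ is a complete lattice on which $\cdot_{\gamma}$ preserves all joins $\bigvee^{\gamma}$, the adjoint $\rightarrow_{\gamma}$ exists; concretely I would show it is the restriction of $\rightarrow$. For $x, y\in\mathbb{Q}_{\gamma}$ and $z\in\mathbb{Q}_{\gamma}$ one has $x\cdot_{\gamma}z\leq y\iff x\cdot z\leq y\iff z\leq x\rightarrow y$, the first equivalence because $x\cdot z\leq\gamma(x\cdot z)\leq y$ in one direction and $\gamma(x\cdot z)\leq\gamma(y)=y$ in the other. It remains to check $x\rightarrow y\in\mathbb{Q}_{\gamma}$: starting from $x\cdot(x\rightarrow y)\leq y$ (Theorem~\ref{cuantales1}) and using $x\leq\gamma(x)$ together with the nucleus inequality of Definition~\ref{quanticnucleus}, I get $x\cdot\gamma(x\rightarrow y)\leq\gamma(x)\cdot\gamma(x\rightarrow y)\leq\gamma(x\cdot(x\rightarrow y))\leq\gamma(y)=y$, so $\gamma(x\rightarrow y)\leq x\rightarrow y$ and hence equality. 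This confirms all the clauses of Definition~\ref{residuatedlattice} and completes the proof that $\mathbb{Q}_{\gamma}$ is a commutative integral Quantale.
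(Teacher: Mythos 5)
Your proof is correct. Note that the paper itself gives no proof of this statement --- it is quoted directly from Rosenthal's book (Theorem 3.1.1 together with Lemmas 3.2.1 and 3.2.2) --- so there is no in-paper argument to compare against; what you have written is essentially the standard verification one finds there. All the key points are handled properly: closure of $\mathbb{Q}_{\gamma}$ under meets and the identification of $\bigvee^{\gamma}$ with $\gamma\circ\bigvee$, associativity of $\cdot_{\gamma}$ via Lemma~\ref{quanticnucleusproperty1}, the auxiliary identity $\gamma(\bigvee_i a_i)=\gamma(\bigvee_i\gamma(a_i))$ for the distributivity law, and the observation that residuals $x\rightarrow y$ with $y\in\mathbb{Q}_{\gamma}$ are themselves fixed points (which is the content of Theorem~\ref{Qgammaisclosedunder}). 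You also correctly flag that the bottom of $\mathbb{Q}_{\gamma}$ is $\gamma(0)$ rather than $0$, and that $1$ remains both the top and the unit, so integrality is preserved.
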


\begin{Coro}[\cite{Rosenthal1990}, Chapter 3, Section 1, Corollary 1]\label{gammaimplication}
     If $\gamma:\mathbb{Q}\rightarrow \mathbb{Q}$ is a quantic nucleus, then for every $x, y\in \mathbb{Q}$
    \begin{center}
        $\gamma(x\rightarrow y)\leq x\rightarrow \gamma(y)=\gamma(x)\rightarrow \gamma(y)$
    \end{center}
\end{Coro}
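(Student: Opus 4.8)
The plan is to split the claim into its two constituent assertions, since the statement contains both an inequality $\gamma(x\rightarrow y)\leq x\rightarrow\gamma(y)$ and an equality $x\rightarrow\gamma(y)=\gamma(x)\rightarrow\gamma(y)$. The equality requires no real work: Lemma \ref{charquanticnucleus} states precisely that $\gamma$ is a quantic nucleus if and only if $\gamma(x)\rightarrow\gamma(y)=x\rightarrow\gamma(y)$ for all $x,y\in\mathbb{Q}$, so this half is immediate from the characterization. All of the content is therefore concentrated in the inequality.

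For the inequality I would argue through the residuation adjunction. From the definition $x\rightarrow y=\bigvee\{z:x\cdot z\leq y\}$ together with Theorem \ref{cuantales1}, one has $z\leq x\rightarrow c$ if and only if $x\cdot z\leq c$: the forward direction follows from $x\cdot(x\rightarrow c)\leq c$ (Theorem \ref{cuantales1}(2)) and monotonicity of the product (Theorem \ref{cuantales1}(6)), while the backward direction is just membership of $z$ in the defining supremum. Applying this adjunction with $z=\gamma(x\rightarrow y)$ and $c=\gamma(y)$ reduces the goal $\gamma(x\rightarrow y)\leq x\rightarrow\gamma(y)$ to establishing the single inequality $x\cdot\gamma(x\rightarrow y)\leq\gamma(y)$.

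To verify this reduced inequality I would chain three facts. First, expansivity of the closure operator $\gamma$ (Definition \ref{clousureoperatior}) gives $x\cdot\gamma(x\rightarrow y)\leq\gamma\bigl(x\cdot\gamma(x\rightarrow y)\bigr)$. Second, Lemma \ref{quanticnucleusproperty1}, in the form $\gamma(a\cdot\gamma(b))=\gamma(a\cdot b)$ with $a=x$ and $b=x\rightarrow y$, strips the inner $\gamma$ and yields $\gamma\bigl(x\cdot\gamma(x\rightarrow y)\bigr)=\gamma\bigl(x\cdot(x\rightarrow y)\bigr)$. Third, Theorem \ref{cuantales1}(2) gives $x\cdot(x\rightarrow y)\leq y$, whence monotonicity of $\gamma$ gives $\gamma\bigl(x\cdot(x\rightarrow y)\bigr)\leq\gamma(y)$. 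Concatenating the three yields $x\cdot\gamma(x\rightarrow y)\leq\gamma(y)$, which is exactly what the adjunction demands.

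I do not anticipate a serious obstacle: the only points needing care are getting the direction of the adjunction right and applying Lemma \ref{quanticnucleusproperty1} in the correct instance. The genuine content is already packaged inside Lemma \ref{quanticnucleusproperty1} and the characterization Lemma \ref{charquanticnucleus}, so the corollary amounts to a short bookkeeping assembly of results stated above.
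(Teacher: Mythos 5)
Your proof is correct. The paper does not prove this corollary at all — it is stated as a cited result from Rosenthal's book — so there is no in-paper argument to compare against; your derivation (the equality from Lemma \ref{charquanticnucleus}, the inequality by reducing via residuation to $x\cdot\gamma(x\rightarrow y)\leq\gamma(y)$ and chaining expansivity, Lemma \ref{quanticnucleusproperty1}, and monotonicity) is the standard one and all steps check out.
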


\begin{Teo}[\cite{Rosenthal1990}, Proposition 3.1.2]\label{Qgammaisclosedunder}
    If $A\subseteq \mathbb{Q}$, then $A=\mathbb{Q}_{\gamma}$ for some quantic nucleus $\gamma$ if and only if $A$ is closed under infimums and if $x\in \mathbb{Q}$ and $y\in A$, then $x\rightarrow y\in A$. 
\end{Teo}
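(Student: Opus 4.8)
The plan is to prove the two implications separately. The forward direction follows quickly from the results already established about the fixed-point quantale $\mathbb{Q}_\gamma$, while the reverse direction requires building a quantic nucleus out of $A$ by hand, with multiplicativity of that nucleus being the only nontrivial point.

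For the forward direction, suppose $A=\mathbb{Q}_\gamma$ for a quantic nucleus $\gamma$. Closure under infimums is essentially Theorem \ref{fixedpointsonQ}, which tells us $\bigwedge^{\gamma}_{i}x_i=\bigwedge_i x_i$ for $x_i\in\mathbb{Q}_\gamma$, so the infimum computed in $\mathbb{Q}$ of a family from $A$ is already a fixed point and hence lies in $A$. (If one prefers a self-contained argument: for $y_i\in A$, expansivity gives $\bigwedge_i y_i\le\gamma(\bigwedge_i y_i)$, while $\bigwedge_i y_i\le y_j$ together with monotonicity yields $\gamma(\bigwedge_i y_i)\le\gamma(y_j)=y_j$ for every $j$, hence $\gamma(\bigwedge_i y_i)\le\bigwedge_i y_i$.) For the implication clause, take $x\in\mathbb{Q}$ and $y\in A$, so $\gamma(y)=y$; then Corollary \ref{gammaimplication} gives $\gamma(x\rightarrow y)\le x\rightarrow\gamma(y)=x\rightarrow y$, and with expansivity this forces $\gamma(x\rightarrow y)=x\rightarrow y$, i.e. $x\rightarrow y\in A$.

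For the reverse direction, assume $A$ is closed under infimums and closed under $x\rightarrow(-)$ for arbitrary $x\in\mathbb{Q}$. I would define
\[
\gamma(x):=\bigwedge\{a\in A: x\le a\}.
\]
The empty infimum shows $1\in A$, so the defining set is always nonempty, and closure under infimums gives $\gamma(x)\in A$; thus $\gamma(x)$ is the least element of $A$ lying above $x$. The closure-operator axioms are then routine: expansivity holds because every $a$ in the defining set dominates $x$; monotonicity holds because enlarging $x$ shrinks the defining set and hence raises its infimum; and idempotency holds because $\gamma(x)\in A$ is itself a candidate in the infimum defining $\gamma(\gamma(x))$. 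That $A=\mathbb{Q}_\gamma$ is immediate: if $x\in A$ then $x$ is its own least dominating member of $A$, so $\gamma(x)=x$, and conversely any fixed point equals $\gamma(x)\in A$.

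The main obstacle is verifying the quantic-nucleus inequality $\gamma(x)\cdot\gamma(y)\le\gamma(x\cdot y)$, and here the hypothesis that $A$ absorbs implications is exactly what is needed. Writing $z:=\gamma(x\cdot y)\in A$, I would run residuation twice. From $x\cdot y\le z$ we get $x\le y\rightarrow z$; since $z\in A$ forces $y\rightarrow z\in A$, minimality of $\gamma(x)$ gives $\gamma(x)\le y\rightarrow z$, i.e. $\gamma(x)\cdot y\le z$, equivalently $y\le\gamma(x)\rightarrow z$. As $\gamma(x)\rightarrow z\in A$ too, minimality of $\gamma(y)$ gives $\gamma(y)\le\gamma(x)\rightarrow z$, which by residuation is precisely $\gamma(x)\cdot\gamma(y)\le z=\gamma(x\cdot y)$. (Alternatively one could verify the equivalent criterion $\gamma(x)\rightarrow\gamma(y)=x\rightarrow\gamma(y)$ of Lemma \ref{charquanticnucleus}.) I expect this multiplicativity step, and specifically the trick of feeding $y\rightarrow z$ and $\gamma(x)\rightarrow z$ back into $A$, to be the part demanding the most care.
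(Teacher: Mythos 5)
The paper does not prove this statement at all --- it is quoted from Rosenthal (Proposition 3.1.2) as a black box --- so there is no in-paper argument to compare against. Your proposal is correct and is essentially the standard proof: the forward direction follows from Theorem \ref{fixedpointsonQ} and Corollary \ref{gammaimplication} exactly as you say, and in the converse direction the definition $\gamma(x)=\bigwedge\{a\in A: x\le a\}$ together with the double-residuation trick (feeding $y\rightarrow z$ and then $\gamma(x)\rightarrow z$ back into $A$) is precisely how the multiplicativity inequality is obtained. The only point worth flagging is the reliance on the empty infimum to get $1\in A$ (and hence nonemptiness of the sets $\{a\in A:x\le a\}$); this is the standard reading of ``closed under infimums,'' and it is genuinely needed, since $1\in\mathbb{Q}_\gamma$ always holds, so the equivalence would fail for $A=\emptyset$ under a weaker reading.
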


\begin{Ej}\label{doublenegationisquanticnucleus}
    By Theorem \ref{cuantales2} items 2., 4., 7. and 8., it is clear that the operator $\sim \sim$ is a quantic nucleus on a $\mathbb{Q}$. Furthermore, by Lemma \ref{charquanticnucleus} and Theorem \ref{Qgammaisclosedunder}, we have that
    \begin{center}
        $x\rightarrow \sim\sim y=\sim\sim x\rightarrow \sim\sim y$ and $\sim\sim(\sim\sim x\rightarrow \sim\sim y)=\sim\sim x\rightarrow \sim\sim y$
    \end{center}
\end{Ej}

\begin{Def}[\cite{Moncayo2023}, Definition 1.2.50.]\index{quantic nucleus! respects implications}\label{quanticnucleusrespectsimplications}
    We  say that a quantic nucleus $\gamma:\mathbb{Q}\rightarrow \mathbb{Q}$ \textit{respects implications if}
    \begin{center}
        $\gamma(x\rightarrow y)=1$ if and only if $\gamma(x)\rightarrow \gamma(y)=1$
    \end{center}
    Notice that by Lemma \ref{charquanticnucleus}, this condition is equivalent to 
    \begin{center}
        $\gamma(x\rightarrow y)=1$ if and only if $x\rightarrow \gamma(y)=1$
    \end{center}
\end{Def}

\begin{Lema}[\cite{Moncayo2023}, Lemma 1.2.51, cf. \cite{Fitting1969} Lemma 5.3]\label{gammameetimplication}
    Let $\gamma:\mathbb{Q}\rightarrow \mathbb{Q}$ be a quantic nucleus that respects implications. For every $x_i, y\in \mathbb{Q}$, with $i\in I$,
    \begin{center}
        $\gamma(\bigwedge\limits_{i\in I}(x_i\rightarrow y))=1$ if and only if $\bigwedge\limits_{i\in I}\gamma(x_i\rightarrow y)=1$
    \end{center}
\end{Lema}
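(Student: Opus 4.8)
The plan is to reduce everything to a chain of equivalences driven by the hypothesis that $\gamma$ respects implications (Definition \ref{quanticnucleusrespectsimplications}), after first collapsing the indexed meet into a single implication. The forward direction is immediate and uses nothing beyond the closure structure: since $\bigwedge_{i\in I}(x_i\rightarrow y)\leq x_j\rightarrow y$ for each $j$, monotonicity of $\gamma$ gives $\gamma\left(\bigwedge_{i\in I}(x_i\rightarrow y)\right)\leq \gamma(x_j\rightarrow y)$, so if the left-hand side equals $1$ then $\gamma(x_j\rightarrow y)=1$ for every $j$, whence $\bigwedge_{i\in I}\gamma(x_i\rightarrow y)=1$.

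The backward direction is where the actual work lies. The key observation is that by item 3 of Theorem \ref{cuantales1.5} the meet of implications collapses into a single one, namely $\bigwedge_{i\in I}(x_i\rightarrow y)=\left(\bigvee_{i\in I}x_i\right)\rightarrow y$. Assuming $\bigwedge_{i\in I}\gamma(x_i\rightarrow y)=1$, each factor satisfies $\gamma(x_i\rightarrow y)=1$ because $1$ is the maximum; applying that $\gamma$ respects implications, this yields $x_i\rightarrow\gamma(y)=1$, i.e.\ $x_i\leq\gamma(y)$ by item 1 of Theorem \ref{cuantales1}, for every $i$. Since the supremum is the least upper bound, $\bigvee_{i\in I}x_i\leq\gamma(y)$, and therefore $\left(\bigvee_{i\in I}x_i\right)\rightarrow\gamma(y)=1$.

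To finish I would apply that $\gamma$ respects implications a second time, now in the other direction and with antecedent $\bigvee_{i\in I}x_i$, converting $\left(\bigvee_{i\in I}x_i\right)\rightarrow\gamma(y)=1$ into $\gamma\left(\left(\bigvee_{i\in I}x_i\right)\rightarrow y\right)=1$. By the collapse above this is precisely $\gamma\left(\bigwedge_{i\in I}(x_i\rightarrow y)\right)=1$, completing the equivalence.

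The only genuine obstacle is spotting that item 3 of Theorem \ref{cuantales1.5} lets us treat the whole indexed meet as a single implication $X\rightarrow y$ with $X=\bigvee_{i\in I}x_i$; once that reformulation is in place, the property ``respects implications'' is applied twice — once to strip $\gamma$ off each $x_i\rightarrow y$ and once to reattach it to $X\rightarrow y$ — while the supremum and maximum characterizations of $\leq$ handle the remaining bookkeeping. In fact both directions fit into one chain of equivalences, so the split into two cases is only a matter of exposition.
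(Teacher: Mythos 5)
Your proof is correct and follows essentially the same route as the paper's: both collapse $\bigwedge_{i\in I}(x_i\rightarrow y)$ into $\left(\bigvee_{i\in I}x_i\right)\rightarrow y$ via Theorem \ref{cuantales1.5} item 3 and then apply ``respects implications'' twice, once per factor and once to the collapsed implication. The paper runs the whole thing as a single chain of equivalences (passing through $\bigwedge_{i\in I}(x_i\rightarrow\gamma(y))=1$ where you use $x_i\leq\gamma(y)$ and the least-upper-bound property), which is the same bookkeeping in different clothing.
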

\begin{proof}

    Notice that, since $\gamma$ respects implications and by Theorem \ref{cuantales1.5} part 3.,
    \begin{center}
        $1=\gamma(\bigwedge\limits_{i\in I}(x_i\rightarrow y))=\gamma((\bigvee\limits_{i\in I}x_i)\rightarrow y)$
    \end{center}
    holds, if and only if
    \begin{center}
        $1=(\bigvee\limits_{i\in I}x_i)\rightarrow \gamma(y)$ holds.
    \end{center}
    But then, since $\gamma$ respects implications, the line given above is equivalent to
    \begin{center}
        $1=\gamma((\bigvee\limits_{i\in I}x_i)\rightarrow y)=\bigwedge\limits_{i\in I}(x_i\rightarrow \gamma(y))$.
    \end{center}
    Thus, by definition of infimum, the line given above holds, if and only if,
    \begin{center}
        $x_i\rightarrow \gamma(y)=1$ for every $i\in I$
    \end{center}
    and since $\gamma$ respects implications, that is equivalent to
    \begin{center}
        $\gamma(x_i\rightarrow y)=1$ for every $i\in I$.
    \end{center}
    which is just 
    \begin{center}
        $\bigwedge\limits_{i\in I}\gamma(x_i\rightarrow y)=1$
    \end{center}
\end{proof}

\begin{Def}[\cite{Rosenthal1990}, Definition 3.2.4.]\index{quantic nucleus! idempotent with respect to products }
    We  say that a quantic nucleus $\gamma:\mathbb{Q}\rightarrow \mathbb{Q}$ is \textit{idempotent with respect to products} if for every $x\in\mathbb{Q}$
    \begin{center}
        $\gamma(x^2)=\gamma(x)$
    \end{center}
\end{Def}

\begin{Teo}[\cite{Rosenthal1990}, Lemma 3.2.3]\label{standardisheyting}
    Let $\gamma:\mathbb{Q}\rightarrow \mathbb{Q}$ be a quantic nucleus idempotent with respect to products on $\mathbb{Q}$. Then, $\mathbb{Q}_{\gamma}=\{x\in\mathbb{Q}:\gamma(x)=x\}$ is a idempotent commutative integral quantale, that is, $\mathbb{Q}_{\gamma}$ is a Heyting algebra.
\end{Teo}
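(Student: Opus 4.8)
The plan is to build directly on Theorem~\ref{fixedpointsonQ}, which already guarantees that $\mathbb{Q}_{\gamma}$ is a commutative integral Quantale under the inherited product $x\cdot_{\gamma}y=\gamma(x\cdot y)$. The only thing left to establish is that this Quantale is \emph{idempotent}, since it is remarked just before Section~\ref{residuatedlogic} that an idempotent commutative integral Quantale collapses to a Heyting algebra. Thus the entire argument reduces to verifying one equation, namely $x\cdot_{\gamma}x=x$ for every $x\in\mathbb{Q}_{\gamma}$.

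First I would fix $x\in\mathbb{Q}_{\gamma}$ and unfold the inherited product: by the description of $\cdot_{\gamma}$ in Theorem~\ref{fixedpointsonQ} we have $x\cdot_{\gamma}x=\gamma(x\cdot x)=\gamma(x^{2})$. The hypothesis that $\gamma$ is idempotent with respect to products gives $\gamma(x^{2})=\gamma(x)$, and because $x$ is by definition a fixed point of $\gamma$ we have $\gamma(x)=x$. Chaining these identities yields $x\cdot_{\gamma}x=\gamma(x^{2})=\gamma(x)=x$, so $\cdot_{\gamma}$ is idempotent on $\mathbb{Q}_{\gamma}$.

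With idempotency in hand, I would invoke the collapse fact to conclude that $\mathbb{Q}_{\gamma}$ is a Heyting algebra; concretely, idempotency together with item~7 of Theorem~\ref{cuantales1} forces $\cdot_{\gamma}$ to coincide with $\land$ (for $z=x\land_{\gamma}y$ one gets $z=z\cdot_{\gamma}z\leq x\cdot_{\gamma}y\leq x\land_{\gamma}y=z$), and then the residuation becomes the Heyting implication. The only point requiring genuine care — and the sole potential pitfall rather than a hard obstacle — is keeping straight which product each expression refers to: the condition $\gamma(x^{2})=\gamma(x)$ is stated for the \emph{original} product $\cdot$ of $\mathbb{Q}$, whereas idempotency of $\mathbb{Q}_{\gamma}$ concerns $\cdot_{\gamma}$. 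Making the bridging identity $x\cdot_{\gamma}x=\gamma(x\cdot x)$ explicit is exactly what reconciles the two and completes the proof.
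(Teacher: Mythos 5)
Your proof is correct. The paper itself gives no proof of this statement (it is quoted from Rosenthal, Lemma 3.2.3), and your argument is exactly the expected one: by Theorem~\ref{fixedpointsonQ} only idempotency of $\cdot_{\gamma}$ needs checking, and the chain $x\cdot_{\gamma}x=\gamma(x\cdot x)=\gamma(x^{2})=\gamma(x)=x$ for $x\in\mathbb{Q}_{\gamma}$ settles it, after which the collapse of an idempotent commutative integral quantale to a Heyting algebra (noted at the end of Section~\ref{completeresiduatedlattice}) finishes the proof. Your care in distinguishing the product $\cdot$ of $\mathbb{Q}$ from $\cdot_{\gamma}$ is exactly the right point to flag.
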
    

\begin{Def}[\cite{Moncayo2023}, Definition 1.2.55.]\index{quantic nucleus! respects the bottom element }\label{quanticnucleusrespectsbottomelement}
    We  say that a quantic nucleus $\gamma:\mathbb{Q}\rightarrow \mathbb{Q}$  \textit{respects the bottom element} if 
    \begin{center}
        $\gamma(0)=0$.
    \end{center}
\end{Def}

\begin{Lema}[\cite{Moncayo2023}, Lemma 1.2.56]
\label{doublenegationgammaisidempotent}
    Let $\gamma:\mathbb{Q}\rightarrow \mathbb{Q}$ be a quantic nucleus that respects the bottom element. Then,
    \begin{center}
        $\sim \sim \gamma(x)=\sim \sim (\gamma ( \sim \sim \gamma(x))$,
    \end{center}
    that is, $\sim \sim\gamma$ is idempotent.
\end{Lema}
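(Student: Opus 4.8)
The plan is to reduce the claimed identity to a single observation: because $\gamma$ respects the bottom element, every element of the form $\sim z$ is already a fixed point of $\gamma$. Writing $a := \gamma(x)$, the right-hand side of the identity is $\sim\sim\bigl(\gamma(\sim\sim a)\bigr)$, and since $\sim\sim a = \sim(\sim a)$ is itself a negation, it suffices to show $\gamma(\sim\sim a) = \sim\sim a$. Once the inner occurrence of $\gamma$ collapses in this way, the equation follows immediately from the idempotency of $\sim\sim$ on its own image.

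First I would establish the key lemma: for every $z \in \mathbb{Q}$, $\gamma(\sim z) = \sim z$. Expansivity of the closure operator $\gamma$ gives $\sim z \leq \gamma(\sim z)$. For the reverse inequality I would invoke Corollary \ref{gammaimplication} applied to the implication $\sim z = z \rightarrow 0$: it yields $\gamma(z \rightarrow 0) \leq z \rightarrow \gamma(0)$, and since $\gamma(0) = 0$ by hypothesis, the right-hand side equals $z \rightarrow 0 = \sim z$. Combining the two inequalities gives $\gamma(\sim z) = \sim z$. (Alternatively, this is immediate from Theorem \ref{Qgammaisclosedunder}: since $\gamma(0)=0$ means $0 \in \mathbb{Q}_{\gamma}$, closure of $\mathbb{Q}_\gamma$ under $z \rightarrow (-)$ forces $z \rightarrow 0 \in \mathbb{Q}_{\gamma}$.)

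Then I would apply the lemma with $z = \sim a = \sim\gamma(x)$ to conclude $\gamma(\sim\sim\gamma(x)) = \sim\sim\gamma(x)$. Substituting this into the right-hand side and using that $\sim\sim$ is idempotent on its image — that is, $\sim\sim\sim\sim = \sim\sim$, which follows from $\sim\sim\sim{} = \sim$ (Theorem \ref{cuantales2}, item 8) — I obtain $\sim\sim\bigl(\gamma(\sim\sim\gamma(x))\bigr) = \sim\sim\bigl(\sim\sim\gamma(x)\bigr) = \sim\sim\gamma(x)$, which is exactly the desired equation, and hence $\sim\sim\gamma$ is idempotent.

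I do not anticipate a serious obstacle: the entire content sits in the key lemma that negations are $\gamma$-fixed, and the rest is bookkeeping. The only points requiring care are applying Corollary \ref{gammaimplication} in the correct direction (so that $\gamma(0)=0$ is what makes $z \rightarrow \gamma(0)$ collapse to $\sim z$) and tracking the number of negations so that the final reduction $\sim^{4} = \sim^{2}$ is invoked legitimately.
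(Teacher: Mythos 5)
Your proposal is correct and follows essentially the same route as the paper: both arguments hinge on the observation that $\gamma(0)=0$ forces every negation (hence every double negation) to be a fixed point of $\gamma$, after which the identity reduces to $\sim\sim\sim x=\sim x$ (Theorem \ref{cuantales2}, item 8). The paper gets the fixed-point fact directly from Theorem \ref{Qgammaisclosedunder}, which you also note as an alternative to your explicit two-inequality argument via Corollary \ref{gammaimplication}.
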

\begin{proof}
    Notice that $\gamma(0)=0\in\mathbb{Q}_{\gamma}$ and by Theorem \ref{Qgammaisclosedunder}, we can deduce that for every $z\in \mathbb{Q}$, $z\rightarrow 0=\sim z\in \mathbb{Q}_{\gamma}$ and thus $\sim\sim z\in \mathbb{Q}_{\gamma}$, that is, 
    \begin{center}
        $\gamma(\sim \sim z)=\sim \sim z$.
    \end{center}
    Therefore, by taking $z:=\gamma(x)$ and by Theorem~\ref{cuantales2} item 8.  we have that
    
    \begin{center}
        $\sim \sim (\gamma ( \sim \sim \gamma(x))=\sim \sim (\sim \sim \gamma(x))= \sim \sim \gamma(x)$.
    \end{center}
\end{proof}

\begin{Def}[\cite{Moncayo2023}, 1.2.57.]\index{quantic nucleus! standard}
    We say that a quantic nucleus $\gamma:\mathbb{Q}\rightarrow \mathbb{Q}$ is \textit{standard} if $\gamma$ is idempotent with respect to products, respects implications and the bottom element.
\end{Def}

We introduce the new notion of \textbf{standard quantic nucleus} since it captures all the necessary properties from the double negation operator that are
used in Section \ref{subsectionresiduatedkripkemodels} to generalize the results given in \cite{Fitting1969} that we need. 

\begin{Teo}[\cite{Moncayo2023}, Theorem 1.2.58.]\label{condensedisfilter}
    Let $\gamma:\mathbb{Q}\rightarrow \mathbb{Q}$ be a quantic nucleus on $\mathbb{Q}$. The set
    \begin{center}
        $\mathcal{F}_{\gamma}:=\{x\in \mathbb{Q}: \gamma(x)=1\}$
    \end{center}
    is a filter on $\mathbb{Q}$.
\end{Teo}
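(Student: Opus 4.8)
The plan is to verify directly that $\mathcal{F}_{\gamma}:=\{x\in\mathbb{Q}:\gamma(x)=1\}$ satisfies the two defining conditions of a filter. The set is nonempty since $\gamma(1)=1$ (the top element is a fixed point of any closure operator, because $1\leq\gamma(1)\leq 1$ by expansivity and the fact that $1$ is the maximum), so $1\in\mathcal{F}_{\gamma}$.

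First I would check upward closure. Suppose $x\leq y$ and $x\in\mathcal{F}_{\gamma}$, so $\gamma(x)=1$. By monotonicity of $\gamma$ (Definition \ref{clousureoperatior}, part 3), $x\leq y$ gives $\gamma(x)\leq\gamma(y)$, hence $1=\gamma(x)\leq\gamma(y)\leq 1$, so $\gamma(y)=1$ and $y\in\mathcal{F}_{\gamma}$.

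Next I would check closure under the monoid product. Suppose $x,y\in\mathcal{F}_{\gamma}$, so $\gamma(x)=\gamma(y)=1$. The key fact is the defining inequality of a quantic nucleus (Definition \ref{quanticnucleus}), namely $\gamma(x)\cdot\gamma(y)\leq\gamma(x\cdot y)$. Substituting the hypotheses gives $1\cdot 1\leq\gamma(x\cdot y)$, and since $1$ is the identity of the monoid this reads $1\leq\gamma(x\cdot y)$, so $\gamma(x\cdot y)=1$ and $x\cdot y\in\mathcal{F}_{\gamma}$. This completes both conditions, so $\mathcal{F}_{\gamma}$ is a filter.

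The argument is entirely routine, so I do not expect a genuine obstacle; the only point requiring slight care is making sure the nonemptiness is addressed (a filter is required to be a nonempty subset by the definition in the excerpt), which the observation $1\in\mathcal{F}_{\gamma}$ handles. The conceptual content is simply that the product condition defining a quantic nucleus is exactly what is needed to make the fibre over $1$ multiplicatively closed, while monotonicity delivers upward closure for free.
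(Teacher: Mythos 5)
Your proof is correct and follows essentially the same route as the paper: expansivity gives $1\in\mathcal{F}_{\gamma}$, monotonicity gives upward closure, and the defining inequality $\gamma(x)\cdot\gamma(y)\leq\gamma(x\cdot y)$ gives closure under the product. The only (harmless) difference is that you spell out why $\gamma(1)=1$ exactly, whereas the paper just notes $1\leq\gamma(1)$ and concludes $1\in\mathcal{F}_{\gamma}$ directly.
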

\begin{proof}
\begin{enumerate}    
    \item Since $\gamma$ is expansive, $1\leq \gamma(1)$, and then $1\in \mathcal{F}_{\gamma}$
    \item Take $x\in \mathcal{F}_{\gamma}$ and $y\in \mathbb{Q}$ such that $x\leq y$. Since $\gamma$ is monotone function, $\gamma(x)\leq \gamma(y)$ and since $x\in \mathcal{F}_{\gamma}$, $\gamma(x)=1$. Then, $\gamma(y)=1$ and $y\in \mathcal{F}_{\gamma}$.
    \item Take $x, y\in \mathcal{F}_{\gamma}$, that is, $\gamma(x)=\gamma(y)=1$. Then, since $\gamma$ is a quantic nucleus, $\gamma(x\cdot y)\geq \gamma(x)\cdot \gamma(y)=1\cdot 1=1$, that is, $x\cdot y\in \mathcal{F}_{\gamma}$.    
\end{enumerate}
\end{proof}

For the rest of the section, $|x|$ denotes the class of $x\in \mathbb{Q}$ modulo $\approx_{\mathcal{F}_{\gamma}}$.

\begin{Teo}[\cite{Moncayo2023}, Theorem 1.2.59.]\label{ABiffjAJB}
    Let $\gamma$ be a quantic nucleus that respects implications and take $x, y\in \mathbb{Q}$. Then, $|x|=|y|$ if and only if $\gamma(x)=\gamma(y)$.
\end{Teo}
\begin{proof}
    \begin{align*}
            |x|=|y|&\text{ iff } x\rightarrow y\in \mathcal{F}_{\gamma} \mbox{ and } y\rightarrow x\in \mathcal{F}_{\gamma} &&\text{(by definition of $\approx_{\mathcal{F}_{\gamma}}$)} \\
            &\text{ iff } \gamma(x\rightarrow y)=1=\gamma(y\rightarrow x)\hspace{0.1cm} &&\mbox{(by definition of $\mathcal{F}_{\gamma}$)}\\
            & \text{ iff } \gamma(x)\rightarrow \gamma(y)=1=\gamma(y)\rightarrow \gamma(x)\hspace{0.1cm} &&\mbox{($\gamma$ respects implications)} \\
            &\text{ iff }  \gamma(x)\leq \gamma(y) \text{ and } \gamma(y)\leq \gamma(x) \hspace{0.1cm} &&\mbox{(by  Theorem \ref{cuantales1} item 1.)}\\
            & \text{ iff } \gamma(x)=\gamma(y)
        \end{align*}
\end{proof}

\begin{Coro}[\cite{Moncayo2023}, Corollary 1.2.60.]\label{clasedegammaxesclasedex}
    If $\gamma$ is a quantic nucleus that respects implications and $x\in \mathbb{Q}$, 
    \begin{center}
        $|\gamma(x)|=|x|$,
    \end{center}
    since $\gamma(\gamma(x))=\gamma(x)$.
\end{Coro}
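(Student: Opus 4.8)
The plan is to obtain this directly from Theorem~\ref{ABiffjAJB}, which I regard as the substantive result; the corollary is then just an instantiation together with the defining idempotency of the closure operator. Recall that Theorem~\ref{ABiffjAJB} asserts, for a quantic nucleus $\gamma$ that respects implications and for arbitrary $u, v\in\mathbb{Q}$, the equivalence $|u|=|v|$ if and only if $\gamma(u)=\gamma(v)$. Since our hypotheses on $\gamma$ are exactly those required by that theorem, it is available to us here.

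First I would instantiate Theorem~\ref{ABiffjAJB} with $u:=\gamma(x)$ and $v:=x$. This reduces the goal $|\gamma(x)|=|x|$ to verifying the single identity $\gamma(\gamma(x))=\gamma(x)$. In other words, the congruence classes of $\gamma(x)$ and of $x$ coincide precisely when $\gamma$ collapses $\gamma(x)$ and $x$ to the same value under a second application.

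Next I would observe that the required identity $\gamma(\gamma(x))=\gamma(x)$ is immediate: it is exactly the idempotency-with-respect-to-composition clause in the definition of a closure operator (Definition~\ref{clousureoperatior}, item~2), and every quantic nucleus is in particular a closure operator by Definition~\ref{quanticnucleus}. Hence the hypothesis of the instantiated equivalence is satisfied with no further work, and Theorem~\ref{ABiffjAJB} yields $|\gamma(x)|=|x|$, as desired.

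There is essentially no obstacle to overcome in this argument: all of the analytic content (the use of ``$\gamma$ respects implications'' and of Theorem~\ref{cuantales1} item~1) has already been absorbed into Theorem~\ref{ABiffjAJB}, while the only fact needed beyond that theorem, namely $\gamma(\gamma(x))=\gamma(x)$, is a definitional property of closure operators. The corollary is therefore a one-line consequence, and the parenthetical ``since $\gamma(\gamma(x))=\gamma(x)$'' in the statement already signals precisely this reduction.
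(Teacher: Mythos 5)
Your proposal is correct and matches the paper's intended argument exactly: the corollary is the instantiation of Theorem~\ref{ABiffjAJB} at the pair $(\gamma(x),x)$, with the hypothesis $\gamma(\gamma(x))=\gamma(x)$ discharged by the idempotency clause of Definition~\ref{clousureoperatior}, which is precisely the justification the paper itself signals with the phrase ``since $\gamma(\gamma(x))=\gamma(x)$.''
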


    \begin{Teo}[\cite{Moncayo2023}, Theorem 1.2.61. ]\label{FgammaisHeyting}
        Let $\gamma$ be a quantic nucleus idempotent with respects to products and that respects implications. Then, $\mathbb{Q}/\mathcal{F}_{\gamma}$ is a Heyting algebra.     
    \end{Teo}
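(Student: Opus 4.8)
The plan is to reduce the claim to the idempotency of the quotient quantale. By Theorem~\ref{condensedisfilter}, $\mathcal{F}_{\gamma}$ is a filter, so by Definition~\ref{Q/F} the quotient $\mathbb{Q}/\mathcal{F}_{\gamma}$ is already a complete residuated lattice (a commutative integral Quantale) whose operations are induced from those of $\mathbb{Q}$; in particular the monoid product is given by $|x|\cdot|y|=|x\cdot y|$. As observed in the paragraph following Theorem~\ref{cuantales2}, a commutative integral Quantale is a Heyting algebra exactly when it is idempotent. Hence it suffices to show that $|x|\cdot|x|=|x|$ for every $x\in\mathbb{Q}$.

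First I would rewrite the left-hand side using the induced product, obtaining $|x|\cdot|x|=|x\cdot x|=|x^{2}|$, so that the goal becomes $|x^{2}|=|x|$. Since $\gamma$ respects implications, Theorem~\ref{ABiffjAJB} applies and tells us that $|x^{2}|=|x|$ holds if and only if $\gamma(x^{2})=\gamma(x)$. Now the hypothesis that $\gamma$ is idempotent with respect to products is precisely the statement $\gamma(x^{2})=\gamma(x)$ for all $x\in\mathbb{Q}$, so the equivalence immediately yields $|x^{2}|=|x|$, whence $\mathbb{Q}/\mathcal{F}_{\gamma}$ is idempotent and therefore a Heyting algebra.

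The only delicate point is the well-definedness of the induced product and the identity $|x|\cdot|y|=|x\cdot y|$; rather than re-verifying that $\approx_{\mathcal{F}_{\gamma}}$ is a congruence for $\cdot$, I would simply invoke the assertion of Definition~\ref{Q/F} that the quotient is a complete residuated lattice with operations induced from $\mathbb{Q}$. I expect no genuine obstacle here: the entire content lies in chaining together the reduction of Heyting-ness to idempotency, the translation $|x^{2}|=|x|\Leftrightarrow\gamma(x^{2})=\gamma(x)$ from Theorem~\ref{ABiffjAJB}, and the defining property of idempotency with respect to products.
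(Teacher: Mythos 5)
Your proposal is correct and follows essentially the same route as the paper: both reduce the claim to idempotency of the induced product on $\mathbb{Q}/\mathcal{F}_{\gamma}$ and then deduce $|x\cdot x|=|x|$ from $\gamma(x\cdot x)=\gamma(x)$, the paper doing so via Corollary~\ref{clasedegammaxesclasedex} and you via its parent result, Theorem~\ref{ABiffjAJB}, which is the same fact in equivalent form.
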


    \begin{proof}
        By Theorems \ref{Q/F} and \ref{condensedisfilter}, we know that $\mathbb{Q}/\mathcal{F}_{\gamma}$ is a complete residuated lattice. 
        
        It is enough to prove that the product in $\mathbb{Q}/\mathcal{F}_{\gamma}$ is idempotent. If $|x|\in \mathbb{Q}/\mathcal{F}_{\gamma}$, then
            \begin{align*}
                |x|\cdot |x|&= |x\cdot x|\hspace{0.1cm}\small &&\mbox{(by definition of $\cdot$ on } \mathbb{Q}/\mathcal{F}_{\gamma}) \\
                &= |\gamma(x\cdot x)|\hspace{0.1cm}\small &&\mbox{(by Corollary }\ref{clasedegammaxesclasedex}) \\
                &= |\gamma(x)|\hspace{0.1cm}\small &&\mbox{(since $\gamma$ is idempotent with respect to products} )\\
                &= |x|\hspace{0.1cm}\small &&\mbox{(by Corollary }\ref{clasedegammaxesclasedex} )
            \end{align*}         
    \end{proof}

    \begin{Teo}[\cite{Moncayo2023}, Theorem 1.2.62, cf. \cite{Fitting1969}, Theorem 5.4]\label{unionclass}
        Let $\gamma$ be a quantic nucleus on $\mathbb{Q}$ that respects implications. For every $x_i\in \mathbb{Q}$ with $i\in I$, we have that
        \begin{center}
            $|\bigvee\limits_{i\in I}x_i|=\bigvee\limits_{i\in I}|x_i|$
        \end{center}
    \end{Teo}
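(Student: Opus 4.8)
The plan is to show that $\left|\bigvee_{i\in I} x_i\right|$ is precisely the least upper bound of $\{|x_i| : i\in I\}$ in the complete residuated lattice $\mathbb{Q}/\mathcal{F}_\gamma$, which exists by Definition~\ref{Q/F} together with Theorem~\ref{condensedisfilter}. Rather than splitting the argument into a separate ``is an upper bound'' part and a separate ``is least'' part, I would establish a single chain of equivalences showing that, for an arbitrary class $|y|\in\mathbb{Q}/\mathcal{F}_\gamma$, the class $|y|$ dominates every $|x_i|$ if and only if it dominates $\left|\bigvee_{i\in I} x_i\right|$. This says exactly that $\left|\bigvee_{i\in I} x_i\right|$ and $\bigvee_{i\in I}|x_i|$ have the same set of upper bounds in the quotient, and hence that they coincide.

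To carry this out, recall that by the order on $\mathbb{Q}/\mathcal{F}_\gamma$ and the definition of $\mathcal{F}_\gamma$, the condition $|x_i|\leq |y|$ for all $i$ is equivalent to $\gamma(x_i\rightarrow y)=1$ for all $i$, which is in turn equivalent to $\bigwedge_{i\in I}\gamma(x_i\rightarrow y)=1$ by the definition of infimum. Since $\gamma$ respects implications, Lemma~\ref{gammameetimplication} allows me to replace this condition with $\gamma\left(\bigwedge_{i\in I}(x_i\rightarrow y)\right)=1$, and Theorem~\ref{cuantales1.5} item~3 rewrites the inner infimum as $\left(\bigvee_{i\in I} x_i\right)\rightarrow y$. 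Reading the definition of $\mathcal{F}_\gamma$ and the quotient order backwards then yields $\left|\bigvee_{i\in I} x_i\right|\leq |y|$, which closes the chain of equivalences.

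The genuinely load-bearing step is the passage from $\bigwedge_{i\in I}\gamma(x_i\rightarrow y)=1$ to $\gamma\left(\bigwedge_{i\in I}(x_i\rightarrow y)\right)=1$, that is, commuting $\gamma$ past an arbitrary infimum (at the level of equality with $1$). In a general quantale a nucleus need not commute with infima in this way, so this is where the hypothesis that $\gamma$ respects implications is indispensable; it is exactly the content of Lemma~\ref{gammameetimplication}. Everything else is routine bookkeeping with the definitions of the quotient order and of $\mathcal{F}_\gamma$. I would also note that no appeal to idempotency with respect to products is needed, so the conclusion holds for any implication-respecting quantic nucleus, not merely the standard ones.
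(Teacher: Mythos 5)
Your proof is correct and follows essentially the same route as the paper's: the load-bearing steps are identical (Lemma~\ref{gammameetimplication} to commute $\gamma$ past the infimum at the level of equality with $1$, and Theorem~\ref{cuantales1.5} item~3 to rewrite $\bigwedge_{i}(x_i\rightarrow y)$ as $\left(\bigvee_{i}x_i\right)\rightarrow y$). The only difference is presentational: the paper proves the upper-bound and least-upper-bound directions separately, whereas you obtain both at once from a single chain of equivalences over all $|y|$, which is a harmless repackaging.
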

    \begin{proof}
        For every $i\in I$, $x_i\leq \bigvee\limits_{i\in I}x_i$. Thus, 
        \begin{center}
             $x_i\rightarrow \bigvee\limits_{i\in I}x_i=1$,
        \end{center}
        therefore,
        \begin{center}
            $\gamma(x_i\rightarrow \bigvee\limits_{i\in I}x_i)=1$,
        \end{center}
        and by definition of $\leq$ we deduce that $|x_i|\leq |\bigvee\limits_{i\in I}x_i|$. Thus, $|\bigvee\limits_{i\in I}x_i|$ is an upper bound of $\{|x_i|:i\in I\}$.
        
        To see that it is the smallest upper bound, take $|b|\in \mathbb{Q}/\mathcal{F}_{\gamma}$ an upper bound of $\{|x_i|:i\in I\}$, that is,
        
        \begin{center}
            $|x_i|\leq |b|$ for every $i\in I$,
        \end{center}
        that implies, by definition of $\leq$ on $\mathbb{Q}/\mathcal{F}_{\gamma}$, 
        \begin{center}
            $\gamma(x_i\rightarrow b)=1$ for every $i\in I$.
        \end{center}
        Therefore, 
        \begin{center}
            $\bigwedge\limits_{i\in I}(\gamma(x_i\rightarrow b))=1$,
        \end{center}
        then, by Lemma \ref{gammameetimplication}
        \begin{center}
             $\gamma(\bigwedge\limits_{i\in I}(x_i\rightarrow b))=1$,
        \end{center}
        hence, by Theorem \ref{cuantales1.5} item 3,
        \begin{center}
             $\gamma((\bigvee\limits_{i\in I}x_i)\rightarrow b)=1$,
        \end{center}        
        thus, by definition of $\leq$,        
        \begin{center}
            $|\bigvee\limits_{i\in I}x_i|\leq  |b|$
        \end{center}
    \end{proof}

\section{Residuated Kripke models}\label{subsectionresiduatedkripkemodels}

In \cite{OnoKomori}, Ono and Komori generalize the notion of Intuitionistic Kripke models for (propositional) logics without contraction, that is, substructural logics without the idempotency of the conjunction. Then, in \cite{Ono1985}, Ono defines the notion of Kripke models for the predicate case. For the most part, we follow their notation and conventions, but with small variations, since we are not interested in working with Gentzen-style sequences. Even though these models were initially used for the study of substructural logics, MacCaull \cite{MacCaull1996} showed how the models in \cite{OnoKomori} can be seen as models for Residuated Logic.

Since we are working in the context of Residuated Logic, we need a more robust structure than just a poset $(\mathbb{P}, \leq)$ in order to properly interpret the operation of strong conjunction $\&$ and to capture the subtleties of the Residuated Logic, so it would be natural to work with some kind of \textbf{ordered monoid}, just as Quantales are defined. But there is a small issue: in the Kripke semantics convention, we would expect the forcing relation to preserve truth \textbf{upwards}, that is:

\begin{center}
    if a sentence $\varphi$ is forced at $p$ and $p\leq q$, then the sentence is forced at $q$, where $p, q\in \mathbb{P}$.
\end{center}

So if we were to consider the order as we have been doing it with Quantales, this property would hold \textbf{backwards} (i.e. there would be truth preservation \textbf{downwards}). That is why some of the properties that we require for our orders in this section are the dual ones of the properties of Quantales.

\begin{Def}[\cite{OnoKomori}, Section 3]
    We say that $(\mathbb{P}, \leq, \cdot, 1)$ is a \textit{partially ordered commutative monoid} if
    \begin{enumerate}
        \item $(\mathbb{P}, \leq, 1)$ is a partial order with $1$ as the bottom element.
        \item $(\mathbb{P}, \cdot, 1)$ is a commutative monoid.
        \item For all $a, b, c \in \mathbb{P}$, if $a\leq b$, then $a\cdot c\leq b\cdot c$. 
    \end{enumerate}
\end{Def}

\begin{Def}[\cite{OnoKomori}, Section 3]
    We say that $(\mathbb{P}, \leq, \land , \cdot, 1)$ is an \textit{SO-commutative monoid} if
    \begin{enumerate}
        \item $(\mathbb{P},\land, \leq)$ is a meet-semilattice.
        \item $(\mathbb{P}, \leq, \cdot, 1)$ is a partially ordered commutative monoid.
    \end{enumerate}
\end{Def}

\begin{Def}[\cite{Ono1985}, p. 189]\label{completeSO-commutativemonoid}
    We say that an SO-commutative monoid $(\mathbb{P}, \leq, \land , \cdot, 1)$ is \textit{complete} if
    \begin{enumerate}
        \item $(\mathbb{P},\land)$ is a complete meet-semilattice (thus, $\mathbb{P}$ has a top element, denoted by $\infty$)
        \item For every $a, b_i\in M$ with $i\in I$, $a\cdot \bigwedge\limits_{i\in I} b_i= \bigwedge\limits_{i\in I} (a\cdot b_i)$
    \end{enumerate}
\end{Def}

\begin{Obs}\label{implicationonP}
    Notice that since $a\_\cdot$ preserves arbitrary meets, by the Adjoint Functor Theorem for preorders, $a\cdot\_$ is a right adjoint, that is, there exists a function $a\rightarrow\_:\mathbb{P}\rightarrow \mathbb{P}$ such that for all $b, c\in \mathbb{P}$
    \begin{center}
        $a\rightarrow c\leq b$, if and only if, $c\leq a\cdot b$
    \end{center}
    Furthermore, this implication allow us to define a negation on $\mathbb{P}$ by taking
    \begin{center}
        $\sim a:= a\rightarrow \infty$
    \end{center}
    Notice that we take $a\rightarrow \infty$ as the definition of $\sim a$ since $\infty$ is the top element of $\mathbb{P}$, and the definitions and properties on this order are the duals of those in the context of Quantales. 
\end{Obs}

\begin{Teo}[\cite{Moncayo2023}]\label{propertiesSOmonoidP}
    Take an SO-commutative complete monoid $\mathbb{P}=(\mathbb{P}, \leq, \land , \cdot, 1, \infty)$. Then, $\mathbb{P}$ satisfies (for the most part) the dual properties of $\mathbb{Q}$. More specifically, if $a, b, c \in \mathbb{P}$ with $i\in I$, then:
    \begin{enumerate}
        \item $a\leq b$, if and only if, $(b\rightarrow a)=1$.
        \item $b\leq a\cdot(a\rightarrow b)$.
        \item If $a\leq b$, then $b\rightarrow c\leq a\rightarrow c$.
        \item If $a\leq b$, then $c\rightarrow a\leq c\rightarrow b$.
        \item $(a\cdot b)\rightarrow c= a\rightarrow(b\rightarrow c)$.
        \item $a\cdot (\sim a)=\infty$.
    \end{enumerate}
\end{Teo}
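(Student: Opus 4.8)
The plan is to derive every item from the single adjunction recorded in Remark~\ref{implicationonP}, namely
\[
a\rightarrow c\leq b \quad\text{if and only if}\quad c\leq a\cdot b,
\]
together with the monotonicity of the product (part of the definition of a partially ordered commutative monoid), the fact that $1$ is the bottom element, and that $\infty$ is the top element. Each step is the exact order-dual of the corresponding argument for commutative integral Quantales, so the only thing to watch is that the inequalities point the opposite way from the $\mathbb{Q}$-case. I would first isolate the two basic facts on which everything else rests. Item 2 is the counit of the adjunction: applying the equivalence to the trivially true inequality $a\rightarrow b\leq a\rightarrow b$ gives $b\leq a\cdot(a\rightarrow b)$. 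Item 1 follows by instantiating the adjunction at the upper bound $1$: since $1$ is the bottom element, $b\rightarrow a\leq 1$ is the same as $(b\rightarrow a)=1$, and the adjunction turns this into $a\leq b\cdot 1=b$.

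With these in hand, items 3, 4 and 6 are short. For item 3, assume $a\leq b$; by item 2, $c\leq a\cdot(a\rightarrow c)$, and monotonicity of the product gives $c\leq a\cdot(a\rightarrow c)\leq b\cdot(a\rightarrow c)$, whence the adjunction (with antecedent $b$ and upper bound $a\rightarrow c$) yields $b\rightarrow c\leq a\rightarrow c$. For item 4, assume $a\leq b$; item 2 gives $b\leq c\cdot(c\rightarrow b)$, so $a\leq b\leq c\cdot(c\rightarrow b)$, and the adjunction yields $c\rightarrow a\leq c\rightarrow b$. Item 6 is immediate: item 2 with upper argument $\infty$ gives $\infty\leq a\cdot(a\rightarrow\infty)=a\cdot(\sim a)$, and since $\infty$ is the top element the reverse inequality is automatic, so $a\cdot(\sim a)=\infty$.

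The one step that needs genuine care is item 5, the currying identity $(a\cdot b)\rightarrow c=a\rightarrow(b\rightarrow c)$, and I expect this to be the main obstacle, since it requires using the adjunction twice and folding in associativity and commutativity of the monoid. I would prove it by showing that both sides have the same upper bounds. For an arbitrary $d$, the adjunction gives $(a\cdot b)\rightarrow c\leq d$ iff $c\leq(a\cdot b)\cdot d$. On the other side, two applications of the adjunction give $a\rightarrow(b\rightarrow c)\leq d$ iff $b\rightarrow c\leq a\cdot d$ iff $c\leq b\cdot(a\cdot d)$. Since $b\cdot(a\cdot d)=(a\cdot b)\cdot d$ by commutativity and associativity, the two conditions on $d$ coincide, so $(a\cdot b)\rightarrow c\leq d$ if and only if $a\rightarrow(b\rightarrow c)\leq d$ for every $d$; taking $d$ to be each side in turn and using antisymmetry gives the equality. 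Throughout, the only subtlety is bookkeeping the dual orientation of $\leq$, so no step should present a real difficulty beyond this.
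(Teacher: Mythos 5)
Your proof is correct: every item does follow from the adjunction $a\rightarrow c\leq b$ iff $c\leq a\cdot b$ of Remark~\ref{implicationonP}, together with monotonicity of $\cdot$, the fact that $1$ is the bottom and $\infty$ the top, and your handling of item 5 via ``same upper bounds'' plus antisymmetry is sound. The paper itself states this theorem without proof (deferring to the cited thesis), and your argument is exactly the intended order-dual of the quantale case, so there is nothing further to compare.
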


We focus on Residuated Kripke models with constant universe (i.e. $\mathcal{D}(p)=D$ for all $p\in \mathbb{P}$). We follow the presentation of \cite{Ono1985}, where he calls this kind of models \textit{total strong frame with constant domain} or simply \textit{total CD-frame}. Since all the Residuated Kripke models that we consider have constant universe, we will not call attention on this fact from now on.

\begin{Def}[Residuated Kripke Model, \cite{Ono1985}, p. 189]\index{residuated! Kripke model}\label{ResiduatedKripkeModel}
    We say that $\mathcal{A}=(\mathbb{P}, \leq, \Vdash, D)$ is a \textit{Residuated Kripke $\mathcal{L}$-model} (or \textit{$R$-Kripke $\mathcal{L}$-model} for short) if
    \begin{enumerate}
        \item $\mathbb{P}=(\mathbb{P}, \leq, \land , \cdot, 1, \infty)$ is a complete SO-commutative monoid.
        \item $\Vdash$ is a relation between elements of $\mathbb{P}$ and atomic sentences in the language
        \begin{center}
            $\mathcal{L}_{\mathcal{A}}=\mathcal{L}\cup D$,
        \end{center}
         where each element of $D$ is considered as a constant symbol. We denote $(p, \varphi) \in \ \Vdash$ by  $\mathcal{A}\Vdash_p \varphi$ and we say that \textit{$\varphi$ is forced in $\mathcal{A}$ at $p$}.
        \item  Given $p_i, q\in \mathbb{P}$, with $i\in I$ and $\varphi$ an atomic $\mathcal{L}_{\mathcal{A}}$-sentence, we require that $\Vdash$ satisfies the following conditions:
        \begin{enumerate}[label=\alph*.]
            \item If $\bigwedge\limits_{i\in I}{p_i} \leq q$ and for each $i\in I$ $\mathcal{A}\Vdash_{p_i} \varphi$, then $\mathcal{A}\Vdash_q \varphi$.
            \item $\mathcal{A}\Vdash_{\infty}\varphi$ for every atomic $R-\mathcal{L}_{\mathcal{A}}$-sentence $\varphi$.
            \item $\mathcal{A}\Vdash_{p} \bot$ if and only if $p=\infty$. Recall that $\bot$ is the symbol of contradiction.
        \end{enumerate}
    \end{enumerate}
\end{Def}

\begin{Def}[Residuated Kripke forcing \cite{Ono1985}, p. 189]\index{residuated! Kripke forcing}\label{ResiduatedKripkeForcing}
    Given  $\mathcal{A}=(\mathbb{P}, \leq, \Vdash, D)$ a Residuated Kripke $\mathcal{L}$-model, $p\in \mathbb{P}$ and an $R-\mathcal{L}_{\mathcal{A}}$-sentence $\varphi$, we can extend the forcing relation $\mathcal{A}\Vdash_p \varphi$ to all $R-\mathcal{L}_{\mathcal{A}}$-sentences by recursion on the complexity of $\varphi$

    \begin{enumerate}
        \item $\mathcal{A}\Vdash_p (\varphi \&\psi)$, if and only if, there are $q, r\in\mathbb{P}$ such that $p\geq q\cdot r$, $\mathcal{A}\Vdash_q \varphi$ and $\mathcal{A}\Vdash_r \psi$.
        
        \item $\mathcal{A}\Vdash_p (\varphi \lor \psi)$, if and only if, there are $q, r\in\mathbb{P}$ such that $p\geq q\land r$, and both ($\mathcal{A}\Vdash_q \varphi$ or $\mathcal{A}\Vdash_q \psi$) and ($\mathcal{A}\Vdash_r \varphi$ or $\mathcal{A}\Vdash_r \psi$) hold.
        
        \item $\mathcal{A}\Vdash_p (\varphi \land \psi)$, if and only if, $\mathcal{A}\Vdash_p \varphi$ and $\mathcal{A}\Vdash_p \psi$.
        
        \item $\mathcal{A}\Vdash_p (\varphi\rightarrow \psi)$, if and only if, for all $q, r\in\mathbb{P}$ if $\mathcal{A}\Vdash_q \varphi$ and $p\cdot q\leq r$, then $\mathcal{A}\Vdash_{r} \psi$.
        
        \item $\mathcal{A}\Vdash_p \exists x\varphi(x)$ if and only there exists an index set $I$ such that for every $i\in I$, and there exist $d_i\in D$ and $q_i\in \mathbb{P}$ such that $\bigwedge\limits_{i\in I}q_i\leq p$ and $\mathcal{A}\Vdash_{q_i}\varphi(d_i)$
        
        \item $\mathcal{A}\Vdash_p \forall x\varphi(x)$, if and only if,  for all $b\in D$,   $\mathcal{A}\Vdash_p \varphi(b)$.
    \end{enumerate}
\end{Def}
\begin{Obs}
    By definition of $\sim \varphi:=\varphi\rightarrow \bot$, it is straightforward to prove that
    \begin{center}
        $\mathcal{A}\Vdash_p \sim\varphi$, if and only if, for all $q, r\in\mathbb{P}$ if $\mathcal{A}\Vdash_q \varphi$ and $p\cdot q\leq r$, then $r=\infty$.
    \end{center}
\end{Obs}
\begin{Obs}
    Ono and Komori (see \cite{OnoKomori} Section 6) proved that the propositional version of these models generalizes the usual Intuitionistic Propositional Kripke Models.
\end{Obs}

\begin{Def}[\cite{Ono1985}, p. 189]
    Given $\mathcal{A}=(\mathbb{P}, \leq, \Vdash, D)$ an $R$-Kripke $\mathcal{L}$-model and an $R-\mathcal{L}_{\mathcal{A}}$-sentence $\varphi$, we say that $\varphi$ is \textit{true} in the structure $\mathcal{A}$ (denoted by $\mathcal{A}\models \varphi$) if for all $p\in \mathbb{P}$, $\mathcal{A}\Vdash_p \varphi$. 
\end{Def}

\begin{Teo}[Completeness and Soundness Theorem for the Kripke semantics, \cite{Ono1985} Lemma 2.2 and Theorem 2.3]
     Given an $R-\mathcal{L}$-theory $T$ and $\varphi$ an $R-\mathcal{L}$-sentence, we have that $T\vdash_r \varphi$, if and only if, for every $R$-Kripke $\mathcal{L}$-model $\mathcal{A}$, if $\mathcal{A}\models T$, then $\mathcal{A}\models \varphi$.
\end{Teo}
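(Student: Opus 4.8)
The statement splits into soundness (the left-to-right direction) and completeness (right-to-left), and the plan is to treat them by the two standard techniques: induction on derivations for soundness, and a canonical (term) model construction for completeness. Throughout, the convention that truth is preserved \emph{upward} and that $1$ is the bottom while $\infty$ is the top must be kept in mind, since it reverses the usual inequalities.

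For soundness I would first isolate a \emph{heredity} (persistence) lemma: for every $R$-$\mathcal{L}_{\mathcal{A}}$-sentence $\varphi$ and all $p\leq q$, if $\mathcal{A}\Vdash_p\varphi$ then $\mathcal{A}\Vdash_q\varphi$. This is proved by induction on the complexity of $\varphi$; the atomic case is exactly condition 3(a) of Definition~\ref{ResiduatedKripkeModel}, and the steps for $\&,\lor,\land,\rightarrow,\exists,\forall$ follow from the monotonicity of $\cdot$ and $\land$ together with the residuation properties collected in Theorem~\ref{propertiesSOmonoidP}. With heredity in hand the remaining work is routine: one checks that every axiom schema of Residuated (Monoidal) Logic is forced at \emph{every} point of every $R$-Kripke model, hence is true in the sense of $\models$, and that each inference rule (modus ponens, adjunction for $\&$, and the quantifier rules) preserves truth. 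Induction on the length of a derivation witnessing $T\vdash_r\varphi$ then gives $\mathcal{A}\models\varphi$ whenever $\mathcal{A}\models T$.

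For completeness I would argue contrapositively: assuming $T\not\vdash_r\varphi$, I build an $R$-Kripke model $\mathcal{A}$ with $\mathcal{A}\models T$ but $\mathcal{A}\not\models\varphi$. The carrier $\mathbb{P}$ is taken to be a suitable family of \emph{theories} (deductively closed sets of sentences extending $T$) ordered by inclusion, so that $\geq$ is $\supseteq$, arbitrary meets are intersections, the top $\infty$ is the inconsistent theory, and the identity/bottom $1$ is the set of $T$-theorems. The monoid product is the \emph{fusion} $\Gamma\cdot\Delta$, generated by $\{\alpha\,\&\,\beta:\alpha\in\Gamma,\ \beta\in\Delta\}$: commutativity comes from exchange, the identity law from the provable behaviour of $\&$ and $\top$, and the residual of this product must reproduce the implication $\rightarrow$ of Theorem~\ref{propertiesSOmonoidP}. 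Declaring atomic forcing by membership, the heart of the argument is the \emph{Truth Lemma}, namely $\mathcal{A}\Vdash_\Gamma\varphi$ iff $\varphi\in\Gamma$, proved by induction against the clauses of Definition~\ref{ResiduatedKripkeForcing}. A Lindenbaum-style extension then supplies a point that contains $T$ but excludes $\varphi$, yielding the desired countermodel.

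The main obstacle is matching the canonical construction to the delicate forcing clauses, not the soundness bookkeeping. The $\lor$-clause forces the relevant points to be \emph{prime} (if $\alpha\lor\beta\in\Gamma$ then $\alpha\in\Gamma$ or $\beta\in\Gamma$); the $\exists$-clause of Definition~\ref{ResiduatedKripkeForcing} forces a \emph{Henkin/saturation} condition guaranteeing witnesses $d_i$, which in the predicate case requires expanding the language with fresh constants; and the $\&$- and $\rightarrow$-clauses require that fusion distribute over arbitrary meets and genuinely be the residual of $\rightarrow$, which is exactly where the absence of contraction must be respected so that one does not inadvertently validate idempotency of $\&$. Reconciling primeness and saturation with the requirement that $\mathbb{P}$ still be a complete SO-commutative monoid in the sense of Definition~\ref{completeSO-commutativemonoid}, while ensuring that every point extends $T$ (so that $\mathcal{A}\models T$) and at least one point omits $\varphi$, is the technically sensitive step that must be carried out with care.
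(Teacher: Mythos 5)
First, a point of reference: the paper does not prove this theorem at all --- it is imported verbatim from Ono's 1985 paper (his Lemma 2.2 for soundness and Theorem 2.3 for completeness), so there is no in-paper proof to compare against. Your overall architecture (soundness by induction on derivations after a persistence lemma; completeness by a canonical model plus a Truth Lemma) is the standard one and is the right shape. One refinement on the soundness half: the persistence lemma you state is the single-premise version, but what is actually needed (and what the paper records as Lemma~\ref{sentencesstronglyhereditary}, i.e.\ Ono's Lemma 2.1) is the \emph{strong} form --- if $\mathcal{A}\Vdash_{a_i}\varphi$ for all $i\in I$ and $\bigwedge_{i\in I}a_i\leq b$ then $\mathcal{A}\Vdash_b\varphi$ --- since the strong form is required as the induction hypothesis even to push the $\rightarrow$ and $\exists$ cases through, and it is what makes the forcing sets land in $\mathbb{P}^*$.

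The genuine gap is in your completeness half. You propose canonical worlds that are deductively closed, \emph{prime}, and \emph{Henkin-saturated} theories, and you correctly sense at the end that reconciling this with the SO-commutative monoid structure is problematic --- but the resolution is not to carry it out ``with care''; it is that primeness and saturation are not needed at all, and demanding them is what breaks the construction (the fusion of two prime theories is not prime, and without contraction one cannot freely re-close). The nonstandard clauses of Definition~\ref{ResiduatedKripkeForcing} are designed precisely to make the Truth Lemma go through at arbitrary worlds: for $\lor$, if $\Gamma\vdash\alpha\lor\beta$ one takes $q=\Gamma\cup\{\alpha\}$ and $r=\Gamma\cup\{\beta\}$, each of which forces a disjunct by the inductive hypothesis, and $\lor$-elimination gives exactly $q\land r\leq\Gamma$ in the provability order, which is what the clause asks for; for $\exists$, one takes the family $q_d=\Gamma\cup\{\varphi(d)\}$ over all constants of a suitably extended language, and $\exists$-elimination with a fresh constant gives $\bigwedge_d q_d\leq\Gamma$. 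This is why the clauses quantify over meets of witnesses rather than demanding a single witness at the world itself. In Ono's actual construction the carrier is (essentially) finite multisets of formulas with concatenation as the monoid product and provability defining both the order and atomic forcing, which sidesteps the issues with closing theories under fusion; the completeness of the meet-semilattice and the distribution of the product over arbitrary meets are then obtained from this concrete presentation rather than postulated of an ad hoc family of saturated theories. As written, your plan would stall exactly at the step you flag as sensitive.
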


Now we proceed to analyze the behavior of the sets of conditions that force a given formula in the context of these models.

\begin{Def}[\cite{OnoKomori}, p. 194]\label{capclosed}\index{$\cap$-closed}
    We say that $A\subseteq \mathbb{P}$ is \textit{$\cap$-closed} $A$ if $A$ is hereditary and closed under (finite) meets, that is:
    \begin{enumerate}
        \item For all $a\in A$ and $b\in \mathbb{P}$, if $a\leq b$, then $b\in A$.
        \item For all $a, b\in A$, $a\land b\in A$.
    \end{enumerate}
    We denote
    \begin{center}
         $D(\mathbb{P}):=\{A\subseteq \mathbb{P}: A \text{ is }\cap \text{-closed}\}$
    \end{center}
\end{Def}

Ono and Komori work with the notion of $\cap$-closed in \cite{OnoKomori} for the propositional case. Since we work in the predicate version of this kind of logics, we have to change the notion of $\cap$-closed to a new notion that we called \textbf{strongly hereditary} set. This had to be done, since in the case of the Predicate Logic the set of elements of $\mathbb{P}$ that force a formula are characterized by a stronger assumption.

\begin{Def}\label{stronglyhereditary}
    We say that a \textbf{non-empty} set $A\subseteq \mathbb{P}$ is \textit{strongly hereditary} if for all $c_i\in A$ and $d\in \mathbb{P}$ for $i\in I$, if $\bigwedge\limits_{i\in I} c_i\leq d$, then $d\in A$. Notice that from the definition it is clear that if $A$ is strongly hereditary, then $A$ is hereditary. We denote
    \begin{center}
         $\mathbb{P}^*:=\{A\subseteq \mathbb{P}: A \text{ is strongly hereditary}\}$
    \end{center}
\end{Def}

As we mentioned before, we need to work with the notion of strongly hereditary and not with Ono's notion of $\cap$-closed since the set

\begin{center}
    $\{p\in \mathbb{P}: \mathcal{A}\Vdash_p \varphi\}$
\end{center}

is strongly hereditary and not just $\cap$-closed for a given $R-\mathcal{L}$-sentence $\varphi$.

\begin{Def}[\cite{Moncayo2023}, Definition 3.1.25, cf. \cite{OnoKomori}, p. 194]\label{operationsstronglyhereditaty}
    Let $(\mathbb{P}, \leq, \land , \cdot, 1, \infty)$ be a complete SO-commutative monoid. Take $A, B, A_i\subseteq \mathbb{P}$, with $i\in I$ and $x\in \mathbb{P}$. We define
    
     \begin{center}
     $A\cdot B:=\{c\in \mathbb{P}: \text{ there exist }a\in A, b\in B \text{ such that } c\geq a\cdot b \}$\\
     
     $x\cdot A:=\{x\}\cdot A=\{c\in \mathbb{P}: \text{ there exists }a\in A \text{ such that }c\geq a\cdot x \}$\\
     
     $A\rightarrow B:=\{c\in \mathbb{P}: c\cdot A\subseteq B\}$\\

     $0_{\mathbb{P}^*}:=\{\infty\}$.

     $1_{\mathbb{P}^*}:=\mathbb{P}$.
         
     $A\lor B:=\{c\in \mathbb{P}: \text{ there exist }a, b\in A\cup B \text{ such that } c\geq a\land b \}$\\
     
     $A\land B:=A\cap B$.
     
     $\bigvee\limits_{i\in I} A_i:=\{c\in \mathbb{P}: \text{ there exists an index set }J\text{ such that for all }  j\in J, \text{ there are } a_j\in \bigcup\limits_{i\in I} A_i \text{ such that } \bigwedge\limits_{j\in J} a_j\leq c \}$\\
     
     $\bigwedge\limits_{i\in I} A_i:=\bigcap\limits_{i\in I} A_i$
     \end{center}
\end{Def}

Our definition of the arbitrary join differs from the one given in \cite{OnoKomori}. Ono and Komori defined the arbitrary join operation as follows:

\begin{center}
    $\bigvee\limits_{i\in I} A_i:=\{c\in \mathbb{P}: \text{ there exists a \textbf{finite} index set }J\text{ such that for all }  j\in J, \text{ there are } a_j\in \bigcup\limits_{i\in I} A_i \text{ such that } \bigwedge\limits_{j\in J} a_j\leq c \}.$\\
\end{center}

We had to change this, since we are working on the set $\mathbb{P}^*$ and not on the set $D(\mathbb{P})$, which happens to be larger, and therefore the definition of joins might not coincide.\\

On the other hand, Ono and Komori proved that $D(\mathbb{P})$ is a complete full BCK-algebra (see \cite{OnoKomori} for a definition of this algebra) with the operations they defined. Since a complete full BCK-algebra happens to be a complete Residuated Lattice, we use their ideas to prove that the set $\mathbb{P}^*$ is a complete Residuated Lattice with the operations that we defined above.

\begin{Teo}[\cite{Moncayo2023}, Theorem 3.1.26, cf. \cite{OnoKomori}, Lemma 8.3]\label{P*isaresiduatedlattice}
    Let $(\mathbb{P}, \leq, \land , \cdot, 1, \infty)$ be a complete SO-commutative monoid. Then, $\mathbb{P}^*$ endowed with the operations of Definition \ref{operationsstronglyhereditaty} and the order $\subseteq$ forms a complete Residuated Lattice.
\end{Teo}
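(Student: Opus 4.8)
The plan is to exploit a clean structural description of $\mathbb{P}^*$: \emph{every strongly hereditary set is a principal up-set}. First I would prove the lemma that $A\in\mathbb{P}^*$ if and only if $A={\uparrow}m:=\{d\in\mathbb{P}:m\leq d\}$ for $m:=\bigwedge A$ (which exists since $\mathbb{P}$ is a complete meet-semilattice by Definition \ref{completeSO-commutativemonoid}). For the forward direction, applying the defining condition of Definition \ref{stronglyhereditary} to the family consisting of all elements of $A$ and to $d=\bigwedge A$ shows $\bigwedge A\in A$, and heredity then gives ${\uparrow}(\bigwedge A)\subseteq A$; the reverse inclusion is immediate. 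Conversely each ${\uparrow}m$ is strongly hereditary, because $\bigwedge_{i}c_i\leq d$ with every $c_i\geq m$ forces $d\geq m$. Consequently $\Phi\colon\mathbb{P}\to\mathbb{P}^*$, $\Phi(m)={\uparrow}m$, is an \emph{order-reversing} bijection with inverse $A\mapsto\bigwedge A$, so $\mathbb{P}^*$ (ordered by $\subseteq$) is order-isomorphic to the dual order $\mathbb{P}^{\mathrm{op}}$.

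Next I would observe that $\mathbb{P}^{\mathrm{op}}$ is itself a commutative integral Quantale: reversing the order turns the bottom $1$ into the top and the top $\infty$ into the bottom, so the monoid identity $1$ becomes the top (integrality), the complete meets of $\mathbb{P}$ become complete joins, and the distributivity $a\cdot\bigwedge_i b_i=\bigwedge_i(a\cdot b_i)$ of Definition \ref{completeSO-commutativemonoid} becomes distributivity of $\cdot$ over arbitrary joins. Thus, by Definition \ref{residuatedlattice}, it suffices to check that $\Phi$ carries each operation of Definition \ref{operationsstronglyhereditaty} to the corresponding operation of $\mathbb{P}^{\mathrm{op}}$. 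The easy correspondences are routine up-set computations using monotonicity of $\cdot$: one gets ${\uparrow}m\cdot{\uparrow}n={\uparrow}(m\cdot n)$; the constants match since $1_{\mathbb{P}^*}=\mathbb{P}={\uparrow}1$ and $0_{\mathbb{P}^*}=\{\infty\}={\uparrow}\infty$; and $\bigcap_i{\uparrow}m_i={\uparrow}(\bigvee_i m_i)$ while $\bigvee^{*}_i{\uparrow}m_i={\uparrow}(\bigwedge_i m_i)$. The last identity is exactly why Definition \ref{operationsstronglyhereditaty} must use an \emph{arbitrary} index set $J$ rather than the finite one of Ono--Komori: a finite $J$ would only produce finite meets in $\mathbb{P}$ and fail to realize $\bigwedge_i m_i$.

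The main obstacle is the residuation, i.e. showing that $A\rightarrow B=\{c:c\cdot A\subseteq B\}$ is the genuine residual of $\cdot$ on $\mathbb{P}^*$; here I would argue directly rather than through up-sets. First, $A\rightarrow B$ is strongly hereditary: if each $c_i\cdot A\subseteq B$ and $\bigwedge_i c_i\leq d$, then for any $a\in A$ we have $a\cdot d\geq a\cdot\bigwedge_i c_i=\bigwedge_i(a\cdot c_i)$ with each $a\cdot c_i\in B$, so strong heredity of $B$ places $a\cdot d$, and hence every element of $d\cdot A$, in $B$; non-emptiness follows from $a\cdot\infty=\infty$ (the empty-index instance of the distributivity law), which gives $\infty\in A\rightarrow B$. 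Then I would verify the adjunction $A\cdot C\subseteq B$ iff $C\subseteq A\rightarrow B$ for all $A,B,C\in\mathbb{P}^*$: both directions unwind the definitions of $A\cdot C$ and $c\cdot A$ and use only heredity of $B$. By uniqueness of adjoints this identifies $A\rightarrow B$ with the residual, so $\Phi$ is an isomorphism of residuated lattices and the distributivity law $A\cdot\bigvee^{*}_i B_i=\bigvee^{*}_i(A\cdot B_i)$ is inherited from $\mathbb{P}^{\mathrm{op}}$. Therefore $\mathbb{P}^*$ is a complete residuated lattice.
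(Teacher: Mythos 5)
Your proof is correct, but it takes a genuinely different route from the one the paper relies on. The paper gives no in-text argument for Theorem \ref{P*isaresiduatedlattice}: it points to Ono and Komori's Lemma 8.3, which establishes the analogous claim for the algebra $D(\mathbb{P})$ of $\cap$-closed sets by verifying the residuated-lattice axioms for each defined operation directly, and says that those computations are adapted to $\mathbb{P}^*$ with the modified (arbitrary-index) join. You instead start from the structural observation that, since the condition of Definition \ref{stronglyhereditary} may be applied to the family of \emph{all} elements of $A$, every strongly hereditary set contains its own infimum and hence equals the principal up-set ${\uparrow}(\bigwedge A)$; this makes $A\mapsto\bigwedge A$ an order-reversing bijection exhibiting $\mathbb{P}^*\cong\mathbb{P}^{\mathrm{op}}$, after which everything is transport of structure from the quantale $\mathbb{P}^{\mathrm{op}}$. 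Your computations (${\uparrow}m\cdot{\uparrow}n={\uparrow}(m\cdot n)$, $\bigcap_i{\uparrow}m_i={\uparrow}(\bigvee_i m_i)$, the join formula, and the residuation adjunction) all check out against Definition \ref{operationsstronglyhereditaty} and Remark \ref{implicationonP}. What each approach buys: yours is shorter, it explains precisely why the arbitrary index set $J$ is required in the join (a finite $J$ yields unions of up-sets of finite meets, which need not be strongly hereditary), and it makes transparent that each forcing set $\{p\in\mathbb{P}:\mathcal{A}\Vdash_p\varphi\}$ is determined by a single element of $\mathbb{P}$; the Ono--Komori-style direct verification, by contrast, is the argument that survives for $D(\mathbb{P})$, where the principal-up-set description fails and no such collapse is available. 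One cosmetic point: for non-emptiness of $A\rightarrow B$ you do not need the empty-index instance of distributivity, since $1\leq a$ and monotonicity of the product already give $\infty=1\cdot\infty\leq a\cdot\infty$, hence $a\cdot\infty=\infty$.
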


\begin{Lema}[\cite{Ono1985}, Lemma 2.1]\label{sentencesstronglyhereditary}
    For every $R-\mathcal{L}_{\mathcal{A}}$-sentence $\varphi$ and $a_i, b\in \mathbb{P}$, with $i\in I$. If $\mathcal{A}\Vdash_{a_i}\varphi$ for every $i\in I$ and $\bigwedge\limits_{i\in I}a_i\leq b$, then $\mathcal{A}\Vdash_{b}\varphi$. Notice that this lemma implies that the set  $\{p\in\mathbb{P}:\mathcal{A}\Vdash_p \varphi\}$ is strongly hereditary.
\end{Lema}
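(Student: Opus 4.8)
The plan is to proceed by induction on the complexity of the $R$-$\mathcal{L}_{\mathcal{A}}$-sentence $\varphi$, showing in each case that the forcing set $S_\varphi := \{p\in\mathbb{P} : \mathcal{A}\Vdash_p\varphi\}$ absorbs meet-lower-bounds in the required way. For the base cases, when $\varphi$ is atomic the claim is exactly condition (a) of Definition \ref{ResiduatedKripkeModel}; when $\varphi$ is $\bot$ we have $S_\bot=\{\infty\}$ by condition (c), which is strongly hereditary since any meet of a family drawn from $\{\infty\}$ equals $\infty$ and $\bigwedge_{i} a_i\leq b$ then forces $b=\infty$.

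For the inductive step I would observe that each clause of Definition \ref{ResiduatedKripkeForcing} expresses $S_\varphi$ as one of the operations of Definition \ref{operationsstronglyhereditaty} applied to the forcing sets of the immediate subformulas: $S_{\psi\land\chi}=S_\psi\cap S_\chi$, $S_{\psi\,\&\,\chi}=S_\psi\cdot S_\chi$, $S_{\psi\lor\chi}=S_\psi\lor S_\chi$, $S_{\psi\to\chi}=S_\psi\to S_\chi$, $S_{\forall x\psi(x)}=\bigwedge_{d\in D}S_{\psi(d)}$ and $S_{\exists x\psi(x)}=\bigvee_{d\in D}S_{\psi(d)}$. By the induction hypothesis the sets $S_\psi,S_\chi,S_{\psi(d)}$ are strongly hereditary, i.e. members of $\mathbb{P}^*$, and by Theorem \ref{P*isaresiduatedlattice} the family $\mathbb{P}^*$ is closed under all of these operations; hence $S_\varphi\in\mathbb{P}^*$, which is precisely the conclusion (non-emptiness being part of membership in $\mathbb{P}^*$).

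I expect the only genuinely delicate points to be the matches for the clauses that quantify over auxiliary conditions, namely implication, strong conjunction, disjunction and the existential quantifier; and if one prefers to avoid appealing to Theorem \ref{P*isaresiduatedlattice}, to argue these directly. The implication case is representative: assuming $\mathcal{A}\Vdash_{a_i}(\psi\to\chi)$ for all $i$ and $\bigwedge_{i}a_i\leq b$, to show $\mathcal{A}\Vdash_b(\psi\to\chi)$ one fixes $q,r$ with $\mathcal{A}\Vdash_q\psi$ and $b\cdot q\leq r$, deduces $\mathcal{A}\Vdash_{a_i\cdot q}\chi$ for each $i$ (taking $r':=a_i\cdot q$ in the forcing clause at $a_i$), and then uses the distributivity $\bigwedge_{i}(a_i\cdot q)=(\bigwedge_{i}a_i)\cdot q$ from Definition \ref{completeSO-commutativemonoid} together with monotonicity of the product to get $\bigwedge_{i}(a_i\cdot q)\leq b\cdot q\leq r$; the induction hypothesis applied to $\chi$ then yields $\mathcal{A}\Vdash_r\chi$. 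The strong-conjunction case is handled analogously by taking $q:=\bigwedge_i q_i$ and $r:=\bigwedge_i r_i$ for the witnessing conditions and invoking distributivity to bound $q\cdot r=\bigwedge_{i,j}(q_i\cdot r_j)\leq \bigwedge_i(q_i\cdot r_i)\leq\bigwedge_i a_i\leq b$. Thus the crux throughout is the interplay between arbitrary meets and the monoid product, which is exactly the content packaged into the closure of $\mathbb{P}^*$.
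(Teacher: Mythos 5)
Your proof is correct. Note that the paper does not actually prove this lemma --- it is imported verbatim as Lemma 2.1 of Ono's paper --- so there is nothing internal to compare against; your structural induction is the standard argument and the one Ono gives. Two small remarks. First, the identities you list for the forcing sets of compound sentences are exactly the content of Theorem \ref{kripkecongruence}, which the paper states \emph{after} this lemma; since they are definitional unfoldings of Definitions \ref{ResiduatedKripkeForcing} and \ref{operationsstronglyhereditaty} there is no circularity, but in a written-up version you should either cite that theorem explicitly or observe that the equalities are immediate from the definitions, rather than leaving the identification tacit. Second, your two hand-checked cases are sound: for implication, instantiating the clause at $a_i$ with $r':=a_i\cdot q$ and then using $\bigwedge_i(a_i\cdot q)=(\bigwedge_i a_i)\cdot q\leq b\cdot q\leq r$ is exactly right, and for strong conjunction the chain $q\cdot r=\bigwedge_{i,j}(q_i\cdot r_j)\leq\bigwedge_i(q_i\cdot r_i)\leq\bigwedge_i a_i\leq b$ is valid because the diagonal is a subfamily of the full product family. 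The only point worth making explicit is the empty-family case $I=\emptyset$, where $\bigwedge_{i\in\emptyset}a_i=\infty$ and the claim reduces to $\mathcal{A}\Vdash_\infty\varphi$ for every sentence; this follows from clause b.\ of Definition \ref{ResiduatedKripkeModel} together with the observation (which you make) that every strongly hereditary set contains $\infty$.
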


\begin{Teo}[\cite{Moncayo2023}, Theorem 3.1.28, cf. \cite{Fitting1969}]\label{kripkecongruence}
    Take a $R$-Kripke $\mathcal{L}$-model $\mathcal{A}=(\mathbb{P}, \leq, \Vdash, D)$. If $\varphi$ and $\psi$ are $R-\mathcal{L}_{\mathcal{A}}$-sentences and $\theta(x)$ is an $R-\mathcal{L}_{\mathcal{A}}$-formula, then
    \begin{center}
        $\{p\in \mathbb{P}:\mathcal{A}\Vdash_p \varphi\}\cdot \{p\in \mathbb{P}:\mathcal{A}\Vdash_p \psi\}=\{p\in \mathbb{P}:\mathcal{A}\Vdash_p \varphi\& \psi\}$\\

        $\{p\in \mathbb{P}:\mathcal{A}\Vdash_p \varphi\}\lor \{p\in \mathbb{P}:\mathcal{A}\Vdash_p \psi\}=\{p\in \mathbb{P}:\mathcal{A}\Vdash_p \varphi\lor \psi\}$\\
        
        $\{p\in \mathbb{P}:\mathcal{A}\Vdash_p \varphi\}\cap \{p\in \mathbb{P}:\mathcal{A}\Vdash_p \psi\}=\{p\in \mathbb{P}:\mathcal{A}\Vdash_p \varphi\land \psi\}$

        $\{p\in \mathbb{P}:\mathcal{A}\Vdash_p \varphi\}\rightarrow \{p\in \mathbb{P}:\mathcal{A}\Vdash_p \psi\}=\{p\in \mathbb{P}:\mathcal{A}\Vdash_p \varphi\rightarrow \psi\}$
        
       $\sim\{p\in \mathbb{P}:\mathcal{A}\Vdash_p \varphi\}=\{p\in \mathbb{P}:\mathcal{A}\Vdash_p \sim\varphi\}$
        
        $\bigcap\limits_{d\in D}\{p\in \mathbb{P}:\mathcal{A}\Vdash_p \theta(d)\}=\{p\in \mathbb{P}:\mathcal{A}\Vdash_p \forall x\theta(x)\}$
        
        $\bigvee\limits_{d\in D}\{p\in \mathbb{P}:\mathcal{A}\Vdash_p \theta(d)\}=\{p\in \mathbb{P}:\mathcal{A}\Vdash_p \exists x\theta(x)\}$
    \end{center}
\end{Teo}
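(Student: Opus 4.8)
The plan is to prove the seven identities one at a time, in each case unwinding the forcing clauses of Definition~\ref{ResiduatedKripkeForcing} on the right-hand side and the operations of Definition~\ref{operationsstronglyhereditaty} on the left-hand side, and then checking that the two resulting descriptions of a subset of $\mathbb{P}$ coincide. Throughout I abbreviate $\|\varphi\|:=\{p\in\mathbb{P}:\mathcal{A}\Vdash_p\varphi\}$; by Lemma~\ref{sentencesstronglyhereditary} this set is strongly hereditary, hence an element of $\mathbb{P}^*$, so that all the operations below are applied to legitimate arguments. Four of the identities are literal coincidences after unfolding: $\|\varphi\|\cap\|\psi\|=\|\varphi\land\psi\|$ is forcing clause (3) together with $A\land B=A\cap B$; $\bigcap_{d\in D}\|\theta(d)\|=\|\forall x\theta(x)\|$ is forcing clause (6); $\|\varphi\|\cdot\|\psi\|=\|\varphi\&\psi\|$ follows by comparing $\{c:\exists a,b,\ c\geq a\cdot b,\ \mathcal{A}\Vdash_a\varphi,\ \mathcal{A}\Vdash_b\psi\}$ with forcing clause (1) after renaming the witnesses $q,r$ to $a,b$; and for $\lor$, the condition $a,b\in\|\varphi\|\cup\|\psi\|$ unfolds to the disjunctive side-conditions of forcing clause (2) while the constraint $c\geq a\land b$ matches $p\geq q\land r$.

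For the implication identity I would insert one extra unfolding step. By definition $\|\varphi\|\rightarrow\|\psi\|=\{c:c\cdot\|\varphi\|\subseteq\|\psi\|\}$ and $c\cdot\|\varphi\|=\{d:\exists a\in\|\varphi\|,\ d\geq a\cdot c\}$, so the condition $c\cdot\|\varphi\|\subseteq\|\psi\|$ says exactly that for every $a$ with $\mathcal{A}\Vdash_a\varphi$ and every $d\geq a\cdot c$ one has $\mathcal{A}\Vdash_d\psi$. This is precisely forcing clause (4) read with $p=c$, $q=a$ and $r=d$, since $d\geq a\cdot c$ is the same as $c\cdot a\leq d$. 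The negation identity then comes for free: as $\sim\varphi=\varphi\rightarrow\bot$, the implication case gives $\|\sim\varphi\|=\|\varphi\|\rightarrow\|\bot\|$, and since $\mathcal{A}\Vdash_p\bot$ holds iff $p=\infty$ (Definition~\ref{ResiduatedKripkeModel}, clause 3.c) we have $\|\bot\|=\{\infty\}=0_{\mathbb{P}^*}$, whence $\|\varphi\|\rightarrow\{\infty\}=\sim\|\varphi\|$.

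The existential/join identity is the only one that needs a genuine selection of witnesses, and I expect it to be the main (though still routine) source of bookkeeping. Applying the join of Definition~\ref{operationsstronglyhereditaty} to the family $\{\|\theta(d)\|\}_{d\in D}$, an element $p$ lies in $\bigvee_{d\in D}\|\theta(d)\|$ iff there is an index set $J$ and elements $a_j\in\bigcup_{d\in D}\|\theta(d)\|$ with $\bigwedge_{j\in J}a_j\leq p$; for the inclusion into $\|\exists x\theta(x)\|$ I would, for each $j$, choose $d_j\in D$ with $\mathcal{A}\Vdash_{a_j}\theta(d_j)$ and feed $I:=J$, $q_j:=a_j$ and these $d_j$ into forcing clause (5), whereas the reverse inclusion reads the data $I,(q_i),(d_i)$ produced by clause (5) directly as such a family with $a_i:=q_i\in\|\theta(d_i)\|$. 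The main obstacle will therefore be organizational rather than conceptual: because the Kripke convention propagates truth \emph{upward}, every order relation in Definition~\ref{operationsstronglyhereditaty} is the dual of its quantale counterpart, so one must consistently read each $\bigwedge$ as the operation that builds joins in the truth order and keep the directions of $\leq$ and $\geq$ straight. Once this is done, every identity collapses to a definitional matching, with only the existential case requiring the (choice-based) extraction of realizing domain elements.
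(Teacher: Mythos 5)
Your proposal is correct: the paper states this theorem without proof (citing \cite{Moncayo2023}), and the argument it implicitly relies on is exactly the definitional unfolding you describe, since the operations of Definition~\ref{operationsstronglyhereditaty} were designed to mirror the forcing clauses of Definition~\ref{ResiduatedKripkeForcing} verbatim. Your handling of the two non-literal cases — reading $c\cdot\|\varphi\|\subseteq\|\psi\|$ as clause (4) and deriving the negation identity from $\|\bot\|=\{\infty\}=0_{\mathbb{P}^*}$, plus the witness selection $j\mapsto d_j$ in the existential case — is sound and complete.
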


\section{Residuated Kripke models with modal operators}\label{ResiduatedKripkemodelswithmodaloperators}

Throughout this subsection, $\mathbb{Q}$ denotes a \textbf{complete Residuated Lattice}.

This section is inspired in the work of Lano \cite{Lano1992a}, where he considers a special kind of \textbf{interior operator} $I$ on a residuated lattice (structure that he calls \textbf{Topological Residuated Lattice}) and constructs models of \textit{Residuated Modal Set Theory}. He introduces an operator of \textbf{necessity} (denoted $\Box$) to be interpreted by an interior operator $I$ and then provides an axiomatization of this logic and proves some results for the model of Set Theory that he constructs.

Unlike Lano, we consider a special kind of \textbf{closure operator} (a \textbf{standard quantic nucleus}) on our residuated lattice and develop a similar kind of model where we augment the logic with a \textbf{possibility} operator (denoted $\Diamond$). This is done since the order that Lano considers for his models preserves truth downwards (following Cohen's convention) and not upwards (following Kripke's).

Thus, we would like to extend the definition of Kripke model so that we can interpret formulas of the form $\Diamond \varphi$. To get an idea of how we should define them, we start from the more natural definition of $\Diamond \varphi$ in a valued model. So consider a $\mathbb{Q}$-valued modal model $\mathcal{M}$ and a quantic nucleus $\gamma$ on $\mathbb{Q}$ (see Definition \ref{quanticnucleus}).

Then, by definition, we have that

\begin{center}
$\ldbrack\Diamond \varphi \rdbrack:= \gamma(\ldbrack\varphi \rdbrack)$.
\end{center}

Let us assume that the formula $\varphi$ satisfies the relation

\begin{center}
    $\mathcal{A}\Vdash_p \varphi$ iff $p\leq \ldbrack\varphi \rdbrack$ (induction hypothesis)
\end{center}

and let us try to define $\mathcal{A}\Vdash_p \Diamond\varphi$ using only the forcing relation $\Vdash$ and the quantic nucleus on $\gamma$. Therefore, we want to prove the following equivalences:

\begin{align*}
            \mathcal{A}\Vdash_p \Diamond\varphi &\text{ iff } p\leq \ldbrack\Diamond\varphi \rdbrack  \\
            &\text{ iff } p\leq \gamma(\ldbrack \varphi \rdbrack) &&\mbox{(by definition of }\ldbrack\Diamond\varphi \rdbrack)\\
            & \text{ iff } \text{there exists $q\in \mathbb{P}$ such that $p\leq \gamma(q)$ and $q\leq \ldbrack\varphi \rdbrack$} &&\mbox{(since $\gamma$ is monotone)} \\
            & \text{ iff } \text{there exists $q\in \mathbb{P}$ such that $p\leq \gamma(q)$ and $\mathcal{A}\Vdash_q \varphi$} &&\mbox{(by induction hypothesis)}
\end{align*}

Therefore, we can define $\mathcal{A}\Vdash_p \Diamond\varphi$ only in terms of forcing and some nucleus $\gamma$ on $\mathbb{P}$. But since the order of the underlining order of the Kripke model is reversed, we need to reverse the order in last line of equivalences above. Also, we have to reverse the order in the conditions defining a quantic nucleus. That lead us to the following definition.

\begin{Def}[\cite{Moncayo2023}, Definition 3.1.29.]\label{conucleusonorder}\index{conucleus}
    We say that a function $\delta:\mathbb{P}\rightarrow\mathbb{P}$ is a \textit{conucleus} on a complete SO-monoid $(\mathbb{P}, \leq, \cdot)$ if for all $p, q, p_i\in \mathbb{P}$ with $i\in I$
    \begin{enumerate}
        \item $\delta(p)\leq p$.
        \item If $p\leq q$, then $\delta(p)\leq \delta(q)$.
        \item $\delta(\delta(p))=\delta(p)$.
        \item $\delta(p\cdot q)\leq \delta(p)\cdot \delta(q)$.
        \item $\delta(\bigwedge\limits_{i\in I} p_i)=\bigwedge\limits_{i\in I} \delta(p_i)$
    \end{enumerate}
\end{Def}

\begin{Obs}
    Notice that the conditions $1-4$ are the dual ones of the conditions in the definition of a quantic nucleus (see Definition \ref{quanticnucleus}) and therefore $\delta$ is a \textbf{interior operator} rather than a closure one. The condition $5$ was added to been able to prove that:
    \begin{enumerate}
        \item The set $\{p\in \mathbb{P}: \mathcal{A}\Vdash_p \varphi\}$ is strongly hereditary for every $MR$-formula $\varphi$,
        \item The quantic nucleus $\gamma_{\delta}$ (we will define this operation in Theorem \ref{gamma*}) is well defined on the set $\mathbb{P}^*$.
    \end{enumerate}    
\end{Obs}

\begin{Obs}
    Notice that since $\delta$ distributes over arbitrary meets, by the Adjoint Functor Theorem for preorders, there exists an operator $\rho:\mathbb{P}\rightarrow \mathbb{P}$ that is the left adjoint of $\delta$. As it is usual in Modal Logic (for example in $S_5$), the necessity and the possibility form an adjoint pair. Therefore, we may use the operator $\rho$ to define a notion of \textbf{necessity} in our Kripke models. We will not do so in our work, since there is no real need for this operator for our results. 
\end{Obs}

\begin{Obs}
    This notion of conucleus is different the notion defined by Rosenthal in \cite{Rosenthal1990}. A conucleus in \cite{Rosenthal1990} is an interior operator that satisfies the condition
    \begin{center}
        $\delta(p)\cdot \delta(q)\leq \delta(p\cdot q)$.
    \end{center}
    which is the opposite of what we require. 
\end{Obs}

\begin{Def}[\cite{Moncayo2023}, Definition 3.1.33.]
    We say that $\mathbb{P}=(\mathbb{P}, \leq, \land , \cdot, 1, \infty, \delta)$ is a \textit{complete modal SO-commutative monoid} if
    \begin{enumerate}
        \item $(\mathbb{P}, \leq, \land , \cdot, 1, \infty)$ is a complete SO-commutative monoid.
        \item $\delta$ is a conucleus on $(\mathbb{P}, \leq, \cdot)$.
    \end{enumerate}
\end{Def}

\begin{Def}[Modal Residuated Kripke model, \cite{Moncayo2023} Definition 3.1.34 cf. \cite{Ono1985} p. 189]\index{modal! residuated Kripke model}\label{ModalResiduatedKripkeModel}
    We say that $\mathcal{A}=(\mathbb{P}, \leq, \delta, \Vdash, D)$ is a \textit{Modal Residuated Kripke $\mathcal{L}$-model} (or $MR$-Kripke $\mathcal{L}$-model, for short) if
    \begin{enumerate}
        \item $\mathbb{P}=(\mathbb{P}, \leq, \land , \cdot, 1, \infty, \delta)$ is a complete \textbf{modal} SO-commutative monoid.
        \item $\Vdash$ is a relation between elements of $\mathbb{P}$ and atomic sentences in the language
        \begin{center}
            $\mathcal{L}_{\mathcal{A}}=\mathcal{L}\cup D$,
        \end{center}
         where each element of $D$ is considered as a constant symbol. We denote $(p, \varphi) \in \ \Vdash$ as $\mathcal{A}\Vdash_p \varphi$.
        \item  Given $p_i, q\in \mathbb{P}$, with $i\in I$ and $\varphi$ an atomic $\mathcal{L}_{\mathcal{A}}$-sentence, we require that $\Vdash$ satisfies the following conditions:
        \begin{enumerate}[label=\alph*.]
            \item If $\bigwedge\limits_{i\in I}{p_i} \leq q$ and for each $i\in I$ $\mathcal{A}\Vdash_{p_i} \varphi$, then $\mathcal{A}\Vdash_q \varphi$.
            \item $\mathcal{A}\Vdash_{\infty}\varphi$ for every atomic $\mathcal{L}_{\mathcal{A}}$-sentence.
            \item $\mathcal{A}\Vdash_{p} \bot$, if and only if, $p=\infty$. Recall that $\bot$ is the symbol of contradiction.
        \end{enumerate}
    \end{enumerate}
\end{Def}

We propose now an extension of the forcing relation based on the ideas we exposed in the introduction of this subsection.

\begin{Def}[Modal Residuated Kripke forcing, \cite{Moncayo2023} Definition 3.1.35]
    Given $\mathcal{A}=(\mathbb{P}, \leq, \delta, \Vdash, D)$ a $MR$-Kripke $\mathcal{L}$-model, $p\in \mathbb{P}$ and an $MR-\mathcal{L}_{\mathcal{A}}$-sentence $\varphi$, we can extend the forcing relation $\mathcal{A}\Vdash_p \varphi$ to all $MR-\mathcal{L}_{\mathcal{A}}$-sentences by recursion on the complexity of $\varphi$. The definition of $\Vdash$ for the usual symbols is the same as in Definition \ref{ResiduatedKripkeForcing}, so we only define $\Diamond$.

    \begin{enumerate}
        \item $\mathcal{A}\Vdash_p \Diamond \varphi$, if and only if, there exists $q\in \mathbb{P}$ such that  $\mathcal{A}\Vdash_q \varphi$ and $\delta(q)\leq p$. 
    \end{enumerate}
    Notice that the definition of $\Diamond$ depends on $\delta$, so if we change the conucleus $\delta$, we would obtain different notions of possibility $\Diamond$.
\end{Def}

\begin{Lema}[\cite{Moncayo2023}, Lemma 3.1.36, cf. \cite{Ono1985} Lemma 2.1]
    Let $\varphi$ be an $MR-\mathcal{L}_{\mathcal{A}}$-sentence and $a_i, b\in \mathbb{P}$, with $i\in I$. If $\mathcal{A}\Vdash_{a_i}\varphi$ for every $i\in I$ and $\bigwedge\limits_{i\in I}a_i\leq b$, then $\mathcal{A}\Vdash_{b}\varphi$. Notice that this lemma implies that the set 
    \begin{center}
        $\{p\in\mathbb{P}: \mathcal{A}\Vdash_p \varphi\}$ 
    \end{center}
    is strongly hereditary.
\end{Lema}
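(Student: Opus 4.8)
The plan is to argue by induction on the complexity of $\varphi$, following the proof of the non-modal version (Lemma \ref{sentencesstronglyhereditary}) verbatim for the old connectives and adding a single new inductive step for the modality $\Diamond$. For the base case, when $\varphi$ is atomic, the statement is literally condition 3(a) of Definition \ref{ModalResiduatedKripkeModel}, so there is nothing to prove. For the Boolean, implicational, and quantifier connectives the forcing clauses coincide with those of Definition \ref{ResiduatedKripkeForcing}, so these steps are carried out exactly as in the $R$-Kripke case; the common device is to form the meet of the family of witnesses, invoke the induction hypothesis in its strongly hereditary form, and use that $\cdot$ distributes over arbitrary meets (part 2 of Definition \ref{completeSO-commutativemonoid}).

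I would first dispatch two representative non-modal cases. For $\varphi=\psi\,\&\,\chi$, from $\mathcal{A}\Vdash_{a_i}\psi\,\&\,\chi$ pick witnesses $q_i,r_i$ with $a_i\geq q_i\cdot r_i$, $\mathcal{A}\Vdash_{q_i}\psi$, and $\mathcal{A}\Vdash_{r_i}\chi$; setting $q:=\bigwedge_{i\in I} q_i$ and $r:=\bigwedge_{i\in I} r_i$, the induction hypothesis gives $\mathcal{A}\Vdash_q\psi$ and $\mathcal{A}\Vdash_r\chi$, while monotonicity of $\cdot$ yields $q\cdot r\leq\bigwedge_{i\in I}(q_i\cdot r_i)\leq\bigwedge_{i\in I} a_i\leq b$, so $\mathcal{A}\Vdash_b\psi\,\&\,\chi$. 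For $\varphi=\psi\rightarrow\chi$, fix $u,v$ with $\mathcal{A}\Vdash_u\psi$ and $b\cdot u\leq v$; distributivity gives $\bigwedge_{i\in I}(a_i\cdot u)=(\bigwedge_{i\in I} a_i)\cdot u\leq b\cdot u\leq v$, and applying each hypothesis $\mathcal{A}\Vdash_{a_i}(\psi\rightarrow\chi)$ at $u$ (with $r:=a_i\cdot u$) yields $\mathcal{A}\Vdash_{a_i\cdot u}\chi$, whence $\mathcal{A}\Vdash_v\chi$ by the induction hypothesis. The $\land$, $\lor$, $\forall$, and $\exists$ cases are analogous; for $\exists$ one simply amalgamates the witnessing index sets.

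The genuinely new step is $\varphi=\Diamond\psi$, and here the conucleus enters. Assume $\mathcal{A}\Vdash_{a_i}\Diamond\psi$ for every $i\in I$ with $\bigwedge_{i\in I} a_i\leq b$. By the forcing clause for $\Diamond$, for each $i$ choose $q_i$ with $\mathcal{A}\Vdash_{q_i}\psi$ and $\delta(q_i)\leq a_i$, and put $q:=\bigwedge_{i\in I} q_i$. The induction hypothesis applied to $\psi$ gives $\mathcal{A}\Vdash_q\psi$, and pushing $\delta$ through the meet we obtain
\[
\delta(q)=\delta\!\left(\bigwedge_{i\in I} q_i\right)=\bigwedge_{i\in I}\delta(q_i)\leq\bigwedge_{i\in I} a_i\leq b ,
\]
so $q$ witnesses $\mathcal{A}\Vdash_b\Diamond\psi$, as required. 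The concluding remark about strong heredity of $\{p\in\mathbb{P}:\mathcal{A}\Vdash_p\varphi\}$ is then immediate from Definition \ref{stronglyhereditary}.

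The point requiring the most care is not any single computation but the bookkeeping that forces the lemma to be stated for an arbitrary family $\{a_i\}$ (strong heredity) rather than for a single $a\leq b$ (plain heredity): in the $\&$, $\rightarrow$, $\lor$, and $\exists$ clauses the witness for $b$ must be assembled as a meet (or union) of the witnesses attached to the whole family, so the induction hypothesis has to be available in that stronger form. In the modal clause the conucleus must be pushed through this meet, which is the role of axiom 5 of Definition \ref{conucleusonorder}; I note, however, that to close the argument one needs only the inequality $\delta(\bigwedge_{i\in I} q_i)\leq\bigwedge_{i\in I}\delta(q_i)$, which already follows from monotonicity of $\delta$, so the full preservation of meets is more than strictly necessary for this lemma alone.
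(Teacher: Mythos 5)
Your proposal is correct and follows essentially the same route as the paper: the paper likewise reduces everything to the non-modal Lemma \ref{sentencesstronglyhereditary} and treats only the $\Diamond$ case, choosing witnesses $q_i$, forming $q=\bigwedge_{i\in I}q_i$, applying the induction hypothesis, and pushing $\delta$ through the meet exactly as you do. Your closing observation that mere monotonicity of $\delta$ (giving $\delta(\bigwedge_{i\in I}q_i)\leq\bigwedge_{i\in I}\delta(q_i)$) already suffices for this step is accurate, but does not change the argument.
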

\begin{proof}
    By Lemma \ref{sentencesstronglyhereditary}, we just need to see what happens with $\Diamond \varphi$ for a given $\varphi$ such that
    \begin{center}
        for all $p, p_i\in \mathbb{P}$ with $i\in I$, if $\mathcal{A}\Vdash_{p_i} \varphi$ and $\bigwedge\limits_{i\in I}p_i\leq p$, then $\mathcal{A}\Vdash_{p} \varphi$ (induction hypothesis).
    \end{center}
    
    Take $p, p_i\in \mathbb{P}$ with $i\in I$ such that $\mathcal{A}\Vdash_{p_i} \Diamond\varphi$
    and $\bigwedge\limits_{i\in I}p_i\leq p$. By definition of $\Vdash$, there exists $q_i\in \mathbb{P}$ such that
    \begin{center}
        $\delta(q_i)\leq p_i$ and $\mathcal{A}\Vdash_{q_i} \varphi$
    \end{center}
    By the induction hypothesis, we have $\mathcal{A}\Vdash_q \varphi$, where $q=\bigwedge\limits_{i\in I}q_i$. Notice that 
    \begin{align*}
            \delta(q)=\delta\left(\bigwedge\limits_{i\in I}q_i\right)&=\bigwedge\limits_{i\in I}\delta(q_i) \hspace{0.5cm}\mbox{(by Definition \ref{conucleusonorder} item 5.)} \\
            &\leq \bigwedge\limits_{i\in I}p_i  \hspace{1cm}\mbox{(since $\delta(q_i)\leq p_i$ for all }i\in I) \\
            & \leq p 
    \end{align*}
    Then, by definition of $\Vdash$, we conclude that $\mathcal{A}\Vdash_{p} \Diamond\varphi$
\end{proof}

\begin{Teo}[\cite{Moncayo2023}, Theorem 3.1.37.]\label{gamma*}
    The operation $\gamma_{\delta}:\mathbb{P}^*\rightarrow \mathbb{P}^*$ defined as 
    \begin{center}
        $\gamma_{\delta}(A):=\{p\in \mathbb{P}: \text{there is } q\in A \text{ such that } \delta(q)\leq p\}$
    \end{center}
    is a quantic nucleus on $(\mathbb{P}^*, \subseteq, \cdot)$.
    If there is no ambiguity, we denote $\gamma:=\gamma_{\delta}$.

\end{Teo}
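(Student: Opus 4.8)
The plan is to verify, in order, the conditions packaged into Definitions~\ref{clousureoperatior} and~\ref{quanticnucleus}, read off on the lattice $(\mathbb{P}^*,\subseteq,\cdot)$ of Theorem~\ref{P*isaresiduatedlattice}: namely that $\gamma_\delta$ sends $\mathbb{P}^*$ into itself, that it is expansive, monotone and idempotent for $\subseteq$, and finally that it satisfies $\gamma_\delta(A)\cdot\gamma_\delta(B)\subseteq\gamma_\delta(A\cdot B)$. Throughout I would exploit that each condition in the definition of a conucleus (Definition~\ref{conucleusonorder}) is the exact ingredient dualizing one step of the classical argument for nuclei.

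The first and most delicate step is \textbf{well-definedness}: given $A\in\mathbb{P}^*$, I must check $\gamma_\delta(A)\in\mathbb{P}^*$. Nonemptiness is immediate, since $\delta(q)\leq\infty$ for any $q\in A$ gives $\infty\in\gamma_\delta(A)$. For strong hereditariness, suppose $p_i\in\gamma_\delta(A)$ for $i\in I$ and $\bigwedge_{i\in I}p_i\leq d$, and choose witnesses $q_i\in A$ with $\delta(q_i)\leq p_i$. The crucial observation is that $\bigwedge_{i\in I}q_i$ already lies in $A$: because $A$ is strongly hereditary and $\bigwedge_{i\in I}q_i\leq\bigwedge_{i\in I}q_i$, Definition~\ref{stronglyhereditary} forces it in. Then condition~5 of Definition~\ref{conucleusonorder} gives $\delta\bigl(\bigwedge_{i\in I}q_i\bigr)=\bigwedge_{i\in I}\delta(q_i)\leq\bigwedge_{i\in I}p_i\leq d$, so $d\in\gamma_\delta(A)$ with witness $\bigwedge_{i\in I}q_i$. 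This is precisely where condition~5 (distribution of $\delta$ over arbitrary meets) is indispensable.

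The closure-operator axioms are then short. \emph{Expansivity} $A\subseteq\gamma_\delta(A)$ follows from $\delta(p)\leq p$ (condition~1), taking $q:=p$ as witness. \emph{Monotonicity} is immediate from the definition, since a witness for $p\in\gamma_\delta(A)$ remains a witness once $A\subseteq B$. For \emph{idempotency}, expansivity and monotonicity give $\gamma_\delta(A)\subseteq\gamma_\delta(\gamma_\delta(A))$; conversely, if $p\in\gamma_\delta(\gamma_\delta(A))$ there are $q\in\gamma_\delta(A)$ and $r\in A$ with $\delta(q)\leq p$ and $\delta(r)\leq q$, whence $\delta(r)=\delta(\delta(r))\leq\delta(q)\leq p$ using monotonicity and idempotency of $\delta$ (conditions~2 and~3), so $p\in\gamma_\delta(A)$.

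Finally, for the \textbf{quantic-nucleus inequality}, take $c\in\gamma_\delta(A)\cdot\gamma_\delta(B)$; by Definition~\ref{operationsstronglyhereditaty} there are $a\in\gamma_\delta(A)$, $b\in\gamma_\delta(B)$ with $a\cdot b\leq c$, and witnesses $q_a\in A$, $q_b\in B$ with $\delta(q_a)\leq a$ and $\delta(q_b)\leq b$. Then $q_a\cdot q_b\in A\cdot B$, and monotonicity of the monoid product together with condition~4 of the conucleus yields $\delta(q_a\cdot q_b)\leq\delta(q_a)\cdot\delta(q_b)\leq a\cdot b\leq c$, so $c\in\gamma_\delta(A\cdot B)$. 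The single genuine obstacle is the well-definedness step: one must recognize that strong hereditariness lets the meet of the chosen witnesses re-enter $A$, and that condition~5 is exactly what pushes $\delta$ through that meet — the remaining verifications are mechanical dualizations of the standard closure-operator bookkeeping.
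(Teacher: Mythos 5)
Your proof is correct: the well-definedness step (using strong hereditariness of $A$ to pull the meet of the witnesses back into $A$, then condition~5 of Definition~\ref{conucleusonorder} to push $\delta$ through that meet) is exactly the delicate point, and the remaining verifications of expansivity, monotonicity, idempotency and $\gamma_\delta(A)\cdot\gamma_\delta(B)\subseteq\gamma_\delta(A\cdot B)$ all check out against Definitions~\ref{operationsstronglyhereditaty} and~\ref{conucleusonorder}. The paper itself omits the proof of Theorem~\ref{gamma*} (citing the thesis), but your direct verification is the standard argument one would expect there, so there is nothing to flag.
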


\begin{Lema}[\cite{Moncayo2023}, Lemma 3.1.38]
    Let $A_i\in \mathbb{P}^*$ for $i\in I$. The quantic nucleus $\gamma$ satisfies
    \begin{center}
        $\gamma(\bigvee\limits_{i\in I}A_i)=\bigvee\limits_{i\in I}\gamma(A_i)$
    \end{center}
\end{Lema}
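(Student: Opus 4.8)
The plan is to establish the equality $\gamma(\bigvee_{i\in I}A_i)=\bigvee_{i\in I}\gamma(A_i)$ as two inclusions with respect to the order $\subseteq$ on $\mathbb{P}^*$. The inclusion $\bigvee_{i\in I}\gamma(A_i)\subseteq \gamma(\bigvee_{i\in I}A_i)$ is purely formal: since each $A_i\subseteq\bigvee_{i\in I}A_i$ and $\gamma$ is a quantic nucleus on $\mathbb{P}^*$ by Theorem \ref{gamma*} (hence a monotone closure operator), we get $\gamma(A_i)\subseteq\gamma(\bigvee_{i\in I}A_i)$ for every $i$. Thus the right-hand side is an upper bound of $\{\gamma(A_i):i\in I\}$ and therefore dominates their join. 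This direction uses no special property of $\delta$ beyond the monotonicity already guaranteed by Theorem \ref{gamma*}.

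The substance lies in the reverse inclusion $\gamma(\bigvee_{i\in I}A_i)\subseteq\bigvee_{i\in I}\gamma(A_i)$, which I would prove by chasing the definitions. Fix $p\in\gamma(\bigvee_{i\in I}A_i)$. By the definition of $\gamma_{\delta}$ in Theorem \ref{gamma*}, there is $q\in\bigvee_{i\in I}A_i$ with $\delta(q)\leq p$, and by the definition of the join in Definition \ref{operationsstronglyhereditaty} there is an index set $J$ together with elements $a_j\in\bigcup_{i\in I}A_i$ (for $j\in J$) such that $\bigwedge_{j\in J}a_j\leq q$. The goal is then to exhibit $p$ as an element of $\bigvee_{i\in I}\gamma(A_i)$, i.e. to produce a family drawn from $\bigcup_{i\in I}\gamma(A_i)$ whose meet lies below $p$.

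The decisive ingredient here is condition 5 of the conucleus (Definition \ref{conucleusonorder}), namely that $\delta$ preserves arbitrary meets. Applying $\delta$ and using its monotonicity together with that condition gives $\bigwedge_{j\in J}\delta(a_j)=\delta(\bigwedge_{j\in J}a_j)\leq\delta(q)\leq p$. For each $j$ the witness $a_j$ lies in some $A_{i(j)}$, and since $\delta(a_j)\leq\delta(a_j)$ we read off directly from the definition of $\gamma_{\delta}$ that $\delta(a_j)\in\gamma(A_{i(j)})\subseteq\bigcup_{i\in I}\gamma(A_i)$. Hence the family $\{\delta(a_j)\}_{j\in J}$ is drawn from $\bigcup_{i\in I}\gamma(A_i)$ and has meet $\leq p$, so $p\in\bigvee_{i\in I}\gamma(A_i)$ by the definition of the join with witnessing index set $J$.

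I expect the only real obstacle to be bookkeeping: checking that the same index set $J$ can be reused on the target side — which it can, since one simply replaces each $a_j$ by $\delta(a_j)$ — and confirming that condition 5 applies to the possibly infinite meet $\bigwedge_{j\in J}a_j$, which it does because that condition is stated for arbitrary index sets. Notably, no use of idempotency with respect to products, nor of any property of $\delta$ beyond monotonicity and preservation of arbitrary meets, is required; this is exactly the role that condition 5 was designed to play according to the remark following Definition \ref{conucleusonorder}.
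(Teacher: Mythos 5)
Your proof is correct. The paper states this lemma without an accompanying proof (it defers to \cite{Moncayo2023}), so there is nothing to compare line by line; your two-inclusion definition chase, with condition 5 of Definition \ref{conucleusonorder} supplying the key step $\bigwedge_{j\in J}\delta(a_j)=\delta\bigl(\bigwedge_{j\in J}a_j\bigr)\leq\delta(q)\leq p$ and the witnesses $\delta(a_j)\in\gamma(A_{i(j)})$, is exactly the natural argument and matches the role that the paper's remark after Definition \ref{conucleusonorder} assigns to that condition.
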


\begin{Obs}
        Notice that the previous lemma implies, by the Adjoint Functor Theorem for preorders, that $\gamma$ has a right adjoint. This adjoint may be used to define a notion of necessity just as we mentioned before.
\end{Obs}

\begin{Def}[\cite{Moncayo2023}, Definition 3.1.40.]
    Let $\delta:\mathbb{P}\rightarrow\mathbb{P}$ be a conucleus. 
    \begin{enumerate}
        \item $\delta$ is said to be \textit{idempotent} if $\delta(p^2):=\delta(p\cdot p)=\delta(p)$, for every $p\in \mathbb{P}$.\index{conucleus! idempotent}
        \item $\delta$ is said to \textit{respects the top element} if $\delta(\infty)=\infty$.\index{conucleus! respects the top element}
        \item $\delta$ is said to \textit{respect implications} if $\delta(p\rightarrow q)=1$, if and only if, $p\rightarrow \delta(q)=1$ for every $p, q\in \mathbb{P}$ (see Remark \ref{implicationonP} for the definition of $\rightarrow$).\index{conucleus! respect implications}        
    \end{enumerate}
\end{Def}

\begin{Teo}[\cite{Moncayo2023}, Theorem 3.1.41.]\label{gamma*isstandard}
    Let $\delta:\mathbb{P}\rightarrow\mathbb{P}$ be a conucleus. 
    \begin{enumerate}
        \item If $\delta$ is idempotent, then $\gamma$ is idempotent.        
        \item If $\delta$ respects the top element, then $\gamma$ respects the bottom element.
        \item If $\delta$ respects implications, then $\gamma$ respects implications.              
    \end{enumerate}
\end{Teo}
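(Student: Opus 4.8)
The plan is to prove the three items independently, in each case unwinding the definition of $\gamma=\gamma_{\delta}$ from Theorem~\ref{gamma*} together with the quantale operations on $\mathbb{P}^*$ recorded in Definition~\ref{operationsstronglyhereditaty}. Items 1 and 2 are short computations, while item 3 is the substantial one.

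For item 1 I would show the two inclusions $\gamma(A\cdot A)\subseteq\gamma(A)$ and $\gamma(A)\subseteq\gamma(A\cdot A)$. The first is automatic: since $1$ is the bottom of $\mathbb{P}$, any $a,a'\in A$ satisfy $a\cdot a'\geq a\cdot 1=a$, so every generator of $A\cdot A$ lies above an element of $A$; heredity gives $A\cdot A\subseteq A$, and monotonicity of the closure operator $\gamma$ yields $\gamma(A\cdot A)\subseteq\gamma(A)$. For the reverse inclusion, take $p\in\gamma(A)$ and $a\in A$ with $\delta(a)\leq p$; then $a^2\in A\cdot A$ and $\delta(a^2)=\delta(a)\leq p$ by idempotency of $\delta$, so $p\in\gamma(A\cdot A)$. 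For item 2 I would simply compute, recalling $0_{\mathbb{P}^*}=\{\infty\}$, that $\gamma(\{\infty\})=\{p\in\mathbb{P}:\delta(\infty)\leq p\}=\{p:\infty\leq p\}=\{\infty\}$, where the middle step uses $\delta(\infty)=\infty$ and the last uses that $\infty$ is the top of $\mathbb{P}$.

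Item 3 is the main obstacle. By Definition~\ref{quanticnucleusrespectsimplications} it suffices to show, for all $A,B\in\mathbb{P}^*$, that $\gamma(A\rightarrow B)=1_{\mathbb{P}^*}$ if and only if $A\rightarrow\gamma(B)=1_{\mathbb{P}^*}$, where $1_{\mathbb{P}^*}=\mathbb{P}$. One direction is free from Corollary~\ref{gammaimplication}, which gives $\gamma(A\rightarrow B)\subseteq A\rightarrow\gamma(B)$, so $\gamma(A\rightarrow B)=\mathbb{P}$ forces $A\rightarrow\gamma(B)=\mathbb{P}$. For the converse I would first record two reformulations: $A\rightarrow\gamma(B)=\mathbb{P}$ holds iff $A\subseteq\gamma(B)$, and, since $\gamma(X)$ is upward closed with bottom $1$, the equality $\gamma(X)=\mathbb{P}$ holds iff $1\in\gamma(X)$, i.e. iff there is $q\in X$ with $\delta(q)=1$. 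The task thus becomes: assuming $A\subseteq\gamma(B)$, produce a single $r\in A\rightarrow B$ with $\delta(r)=1$.

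The crux — and the step I expect to be delicate — is the observation that a strongly hereditary set (Definition~\ref{stronglyhereditary}) is closed under arbitrary meets: applying the defining condition to the family $\{a\}_{a\in A}$ with $d:=\bigwedge_{a\in A}a$ shows $\alpha:=\bigwedge_{a\in A}a\in A$, so $A$ has a least element $\alpha$. Since $\alpha\in A\subseteq\gamma(B)$, there is $b\in B$ with $\delta(b)\leq\alpha$, and I would set $r:=\alpha\rightarrow b$. The counit inequality $b\leq\alpha\cdot(\alpha\rightarrow b)=r\cdot\alpha$ (Theorem~\ref{propertiesSOmonoidP} item 2) gives $r\cdot\alpha\in B$ by heredity, and since every $a\in A$ satisfies $a\geq\alpha$ we get $r\cdot a\geq r\cdot\alpha\in B$, hence $r\cdot A\subseteq B$, that is $r\in A\rightarrow B$. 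Finally $\delta(r)=\delta(\alpha\rightarrow b)=1$ follows from $\delta(b)\leq\alpha$ precisely because $\delta$ respects implications (together with Theorem~\ref{propertiesSOmonoidP} item 1). This exhibits the required witness, so $1\in\gamma(A\rightarrow B)$ and $\gamma(A\rightarrow B)=\mathbb{P}$, which completes the equivalence and the theorem. The reason the naive witness $\bigvee_{a\in A}(a\rightarrow b_a)$ fails is that joins are not controlled by the meet-preserving $\delta$; reducing to the least element $\alpha$ collapses the whole family to one instance of the implication and is what makes the hypothesis on $\delta$ usable.
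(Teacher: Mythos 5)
Your proposal is correct and follows essentially the same route as the paper: items 1 and 2 are the same computations, and for item 3 both arguments hinge on the same key observation that strong heredity gives $A$ a least element $\alpha=\bigwedge A$, with the witness being an implication $\alpha\rightarrow b$ whose $\delta$-image is controlled because $\delta$ respects implications. Your only deviation is a mild streamlining: you reduce the converse direction to checking $1\in\gamma(A\rightarrow B)$ (legitimate, since $\gamma(A\rightarrow B)$ is hereditary and $1$ is the bottom of $\mathbb{P}$), whereas the paper verifies membership for an arbitrary $c\in\mathbb{P}$ using a witness $b_a$ with $\delta(b_a)\leq c\cdot a$.
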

\begin{proof}

    Take $A, B\in \mathbb{P}^*$.
    
    \begin{enumerate}
        \item Assume that $\delta$ is idempotent. We want to see that $\gamma(A\cdot A)=\gamma(A)$. Since $(\mathbb{P}^*, \subseteq, \cdot)$ is a commutative integral quantale, by Theorem \ref{cuantales1} item 6, $A\cdot A\subseteq A$ and then, since $\gamma$ is monotone,
        \begin{center}
            $\gamma(A\cdot A)\subseteq \gamma(A)$.
        \end{center}
        Now take $p\in \gamma(A)$. Then, there exists $q\in A$ such that $\delta(q)\leq p$. Since $\delta$ is idempotent, $\delta(q\cdot q)=\delta(q)\leq p$. Therefore, since $q\cdot q\in A\cdot A$, we have that $p\in \gamma(A\cdot A)$, that is, 
         \begin{center}
             $\gamma(A)\subseteq \gamma(A\cdot A)$.
         \end{center}

        \item Recall that $0_{\mathbb{P}^*}=\{\infty\}$, therefore

        \begin{align*}
            \gamma(\{\infty\})&=\{p\in \mathbb{P}: \text{ there exists } q\in \{\infty\} (\delta(q)\leq p)\}=\{p\in \mathbb{P}: \delta(\infty)\leq p\} \\
            &=\{p\in \mathbb{P}: \infty\leq p\} \hspace{1cm} \mbox{(since $\delta$ respects the top element)}\\
            & =\{\infty\}
        \end{align*}
        
        \item Assume now that $\delta$ respects implications. Since $1_{\mathbb{P}^*}=\mathbb{P}$, we want to prove that 
        \begin{center}
             $\gamma(A\rightarrow B)=\mathbb{P}$, if and only if, $A\rightarrow \gamma(B)=\mathbb{P}$.
        \end{center}
        
        Since $\gamma$ is a quantic nucleus on $\mathbb{P}^*$, by Corollary \ref{gammaimplication}, we have that 
        \begin{center}
            $\gamma(A\rightarrow B)\subseteq A\rightarrow \gamma(B)$.
        \end{center}
        And therefore $\gamma(A\rightarrow B)=\mathbb{P}$ implies that $A\rightarrow \gamma(B)=\mathbb{P}$.

        Let us see converse. Let us assume that $A\rightarrow \gamma(B)=\mathbb{P}$.
        
        Take $c\in \mathbb{P}=A\rightarrow \gamma(B)$, that is, $c\cdot A\subseteq \gamma(B)$. This means that for every $a\in A$, there exists $b_a\in B$ such that $\delta(b_a)\leq c\cdot a$. Notice now that
        
        \begin{align*}
            \delta(b_a)\leq (c\cdot a)\cdot 1&\text{ iff } (c\cdot a)\rightarrow \delta(b_a)\leq 1 \hspace{0.5cm} &&\mbox{(by Theorem \ref{propertiesSOmonoidP} item 1.) } \\
            &\text{ iff } \delta((c\cdot a)\rightarrow b_a)=1\hspace{0.5cm} &&\mbox{(since $\delta$ respects implications)}\\
            & \text{ iff } \delta(c \rightarrow (a\rightarrow b_a))=1\hspace{0.5cm} &&\mbox{(by Theorem \ref{propertiesSOmonoidP} item 5.)} \\
            & \text{ iff } c\rightarrow \delta(a\rightarrow b_a)=1 \hspace{0.5cm} &&\mbox{(since $\delta$ respects implications)}\\
            & \text{ iff } \delta(a\rightarrow b_a)\leq c \hspace{0.5cm} &&\mbox{(by Theorem \ref{propertiesSOmonoidP} item 1.)}
    \end{align*}
    If we find some $a\in A$ such that $a\rightarrow b_a\in A\rightarrow B$, we get that $c\in \gamma(A\rightarrow B)$. Thus, take an enumeration $A=\{a_i:i\in I\}$ and define
    \begin{center}
        $a=\bigwedge\limits_{i\in I}a_i$.
    \end{center}
    Since $A$ is strongly hereditary, $a\in A$. Let us see that $a\rightarrow b_a\in A\rightarrow B$, that is, $(a\rightarrow b_a)\cdot A\subseteq B$. So take any $j\in I$ and let us show that $(a\rightarrow b_a)\cdot a_j\in B$.
    
    Since $a=\bigwedge\limits_{i\in I}a_i\leq a_j$, by Theorem \ref{propertiesSOmonoidP} item 3., we deduce that $a_j\rightarrow b_a\leq a\rightarrow b_a$.
    
    Therefore,
    
    \begin{align*}
            b_a&\leq (a_j\rightarrow b_a)\cdot a_j\hspace{0.5cm} \mbox{(by Theorem \ref{propertiesSOmonoidP} item 2.) }\\
	    &\leq (a\rightarrow b_a)\cdot a_j \hspace{0.7cm}\mbox{($\cdot$ is a monotone function)} 
    \end{align*}
    and since $B$ is strongly hereditary and $b_a\in B$, we deduce $(a\rightarrow b_a)\cdot a_j\in B$. Then, since $a\rightarrow b_a\in A\rightarrow B$, we conclude that $c\in \gamma(A\rightarrow B)$.

    \end{enumerate}
\end{proof}

\begin{Teo}\label{gammadiamond}
    Take $\mathcal{A}=(\mathbb{P}, \leq, \delta, \Vdash, D)$ a Modal Residuated Kripke $\mathcal{L}$-model, and an $MR-\mathcal{L}_{\mathcal{A}}$-sentence $\varphi$. Then,
    \begin{center}
        $\gamma(\{p\in \mathbb{P}:\mathcal{A}\Vdash_p \varphi\})=\{p\in \mathbb{P}:\mathcal{A}\Vdash_p \Diamond\varphi\}$
    \end{center}
\end{Teo}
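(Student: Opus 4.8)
The plan is to recognize that this identity is, at bottom, a direct unfolding of two definitions, so the bulk of the work has already been done in the preceding lemmas. First I would set $A := \{p \in \mathbb{P} : \mathcal{A}\Vdash_p \varphi\}$ and invoke Lemma 3.1.36 (the strongly-hereditary lemma for $MR$-sentences) to conclude that $A \in \mathbb{P}^*$. This is the crucial preliminary point: it guarantees that $\gamma = \gamma_{\delta}$ is actually applicable to $A$, since $\gamma_{\delta}$ was only shown to be a quantic nucleus on $\mathbb{P}^*$ in Theorem \ref{gamma*}.

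Next I would simply apply the defining formula for $\gamma_{\delta}$ from Theorem \ref{gamma*} to the set $A$, obtaining
\[
\gamma(A) = \{p \in \mathbb{P} : \text{there is } q \in A \text{ such that } \delta(q)\leq p\}.
\]
The key observation is then purely notational: the membership condition ``$q \in A$'' is by definition of $A$ precisely the statement ``$\mathcal{A}\Vdash_q \varphi$''. Substituting this equivalence into the display above yields
\[
\gamma(A) = \{p \in \mathbb{P} : \text{there exists } q\in\mathbb{P} \text{ such that } \mathcal{A}\Vdash_q \varphi \text{ and } \delta(q)\leq p\}.
\]

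Finally I would compare this with the forcing clause for $\Diamond$, which states that $\mathcal{A}\Vdash_p \Diamond\varphi$ holds if and only if there is some $q\in\mathbb{P}$ with $\mathcal{A}\Vdash_q \varphi$ and $\delta(q)\leq p$. The condition defining $\gamma(A)$ and the condition defining $\{p : \mathcal{A}\Vdash_p \Diamond\varphi\}$ are therefore literally identical, so the two sets coincide and the theorem follows. I do not expect any genuine obstacle here; the proof is a definition chase, and the only substantive ingredient is the appeal to Lemma 3.1.36 to ensure $A$ lies in the domain $\mathbb{P}^*$ on which $\gamma_{\delta}$ is defined. Indeed, this identity is exactly the reason the forcing clause for $\Diamond$ was engineered the way it was in the motivating computation preceding Definition \ref{conucleusonorder}.
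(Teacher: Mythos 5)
Your proof is correct and is essentially identical to the paper's own argument: both are a direct definition chase unfolding the forcing clause for $\Diamond$ and the defining formula for $\gamma_{\delta}$. Your explicit preliminary appeal to the strongly-hereditary lemma to ensure the set lies in $\mathbb{P}^*$ is a point the paper leaves implicit, but it changes nothing of substance.
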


\begin{proof}
    Take $p\in \mathbb{P}$, thus
    \begin{align*}
            r\in \{p\in \mathbb{P}:\mathcal{A}\Vdash_p \Diamond\varphi\}&\text{ iff } \text{ there is } q\in \mathbb{P} \mbox{ such that } \delta(q)\leq r \mbox{ and } \mathcal{A}\Vdash_q \varphi \hspace{1cm} \mbox{(by definition of $\Vdash$)} \\
            &\text{ iff } \text{ there is }  q\in \mathbb{P} \mbox{ such that } \delta(q)\leq r \mbox{ and } q\in \{p\in \mathbb{P}:\mathcal{A}\Vdash_p \varphi\}\\
            & \text{ iff } r\in\gamma(\{p\in \mathbb{P}:\mathcal{A}\Vdash_p \varphi\})\hspace{5.1cm} \mbox{(by definition of $\gamma$)} \\
    \end{align*}
\end{proof}

\section{Modal Residuated Kripke models of Set Theory}\label{ResiduatedKripkemodelsofsettheory}

The goal of this section is to generalize Fitting's translation results on Intuitionistic Kripke models from \cite{Fitting1969}, Chapter 14. Fitting constructs a sequence of Intuitionistic Kripke models $\mathcal{V}^{\mathbb{P}^+}_{\alpha}=(\mathbb{P}, \leq, \Vdash,  R_{\alpha}^{\mathbb{P}^+})$  that is ``isomorphic'' to a sequence of Boolean valued models $(R_{\alpha}^{\mathbb{B}}, \ldbrack \cdot\rdbrack_{\alpha}^\mathbb{B})$. In particular, we want to generalize the following double negation translation result:

    \begin{Coro}[\cite{Fitting1969}, Chapter 15, Corollary 5.6] \label{phiiffnotnotphi}
        If $\varphi$ is an $\mathcal{L}_{\in}$-sentence with no universal quantifiers, then $\varphi$ is valid in the model $R_{\alpha}^{\mathbb{B}}$ (that is $\ldbrack \varphi\rdbrack_{\alpha}^\mathbb{B}=1$), if and only if, $\lnot \lnot \varphi$ is valid in $(\mathbb{P}, \leq, \Vdash,  R_{\alpha}^{\mathbb{P}^+})$ (that is $R_{\alpha}^{\mathbb{P}^+}\Vdash_p \varphi$ for every $p\in \mathbb{P}$.)
    \end{Coro}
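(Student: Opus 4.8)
The plan is to derive the corollary from a single \emph{translation lemma}, proved by induction on the complexity of $\varphi$, which pins down the Boolean truth value of a formula as the double-negation closure of its Kripke forcing set. Write $\|\varphi\|:=\{p\in\mathbb{P}: R_\alpha^{\mathbb{P}^+}\Vdash_p\varphi\}$; by Lemma \ref{sentencesstronglyhereditary} this set is strongly hereditary, hence an element of $\mathbb{P}^*$. Since $R_\alpha^{\mathbb{P}^+}$ is an intuitionistic Kripke model, the induced product on $\mathbb{P}^*$ is idempotent, so $\mathbb{P}^*$ is a complete Heyting algebra, and the double negation $\gamma:=\lnot\lnot$ is a standard quantic nucleus on it by Example \ref{doublenegationisquanticnucleus} together with Theorem \ref{cuantales2}. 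The $\gamma$-fixed points form a complete Boolean algebra (each is $\gamma$-regular, and one applies Theorems \ref{fixedpointsonQ} and \ref{FgammaisHeyting}), which I identify with the algebra $\mathbb{B}$ underlying $R_\alpha^{\mathbb{B}}$. The lemma to prove is then
\[
\ldbrack\varphi\rdbrack_\alpha^{\mathbb{B}}=\gamma\bigl(\|\varphi\|\bigr)\qquad\text{for every }\mathcal{L}_\in\text{-sentence }\varphi\text{ with no universal quantifier.}
\]

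First I would dispatch the base case. For the atomic formulas $d\in e$ and $d=e$ the required identity is exactly the statement that the hierarchies $R_\alpha^{\mathbb{P}^+}$ and $R_\alpha^{\mathbb{B}}$ are matched stage by stage, so that membership and equality values computed in $\mathbb{B}$ coincide with the $\gamma$-closures of the corresponding forcing sets. I would prove this by transfinite recursion on $\alpha$, run simultaneously with the inductive definitions of the two universes; this is the structural core of Fitting's construction and the place where the ``isomorphism'' between the two models is actually built.

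For the inductive step each connective is handled with the nucleus identities already in hand. Conjunction is immediate, since $\gamma$ preserves finite meets and $\|\varphi\wedge\psi\|=\|\varphi\|\cap\|\psi\|$ by Theorem \ref{kripkecongruence}. Disjunction and the existential quantifier both rest on the fact that $\gamma$ commutes with arbitrary joins (Theorem \ref{unionclass}, transported through the isomorphism between the quotient $\mathbb{P}^*/\mathcal{F}_\gamma$ and the fixed-point algebra): combined with the join clauses of Theorem \ref{kripkecongruence} and the induction hypothesis, $\gamma(\|\varphi\vee\psi\|)$ and $\gamma(\|\exists x\,\theta\|)$ reproduce the Boolean join and the Boolean supremum over the domain. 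For implication — and hence negation, which is $\varphi\rightarrow\bot$ — I would use that the value $\gamma(\|\psi\|)$ of the consequent is already a fixed point, so $\|\varphi\|\rightarrow\gamma(\|\psi\|)$ is fixed too by Theorem \ref{Qgammaisclosedunder}; together with Lemma \ref{charquanticnucleus} and Corollary \ref{gammaimplication} this yields $\gamma(\|\varphi\rightarrow\psi\|)=\gamma(\|\varphi\|)\rightarrow\gamma(\|\psi\|)$, which is precisely the Boolean implication of the two values furnished by the induction hypothesis.

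The step that cannot be carried through is the universal quantifier, and this is the main obstacle as well as the reason for the hypothesis. Here $\|\forall x\,\theta\|=\bigcap_{d}\|\theta(d)\|$ by Theorem \ref{kripkecongruence}, and I would need $\gamma\bigl(\bigcap_{d}\|\theta(d)\|\bigr)=\bigcap_{d}\gamma(\|\theta(d)\|)$; but monotonicity of $\gamma$ gives only the inclusion $\gamma\bigl(\bigcap_{d}\|\theta(d)\|\bigr)\subseteq\bigcap_{d}\gamma(\|\theta(d)\|)$, and double negation fails to commute with infinite meets, so the inductive identity collapses over an infinite domain. Granting the lemma for the fragment without universal quantifiers, the corollary is immediate: $\varphi$ is valid in $R_\alpha^{\mathbb{B}}$ iff $\ldbrack\varphi\rdbrack_\alpha^{\mathbb{B}}$ is the top of $\mathbb{B}$, i.e. $\gamma(\|\varphi\|)=\mathbb{P}=1_{\mathbb{P}^*}$; and two applications of the negation clause of Theorem \ref{kripkecongruence} give $\gamma(\|\varphi\|)=\|\lnot\lnot\varphi\|$ (the double-negation instance of Theorem \ref{gammadiamond}), so this says exactly that $R_\alpha^{\mathbb{P}^+}\Vdash_p\lnot\lnot\varphi$ for every $p\in\mathbb{P}$.
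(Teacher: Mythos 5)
Your proposal is correct and follows essentially the same route the paper takes for its generalization of this result (Theorem \ref{isomorphismresiduatedclassic} followed by Corollary \ref{phiiffdiamondphionmodels}): a translation lemma proved by simultaneous transfinite recursion on the two hierarchies and induction on formula complexity, with universal quantifiers excluded precisely because the nucleus fails to commute with infinite meets, and the corollary then obtained by evaluating at the top element and converting $\gamma$ into a doubled connective via the analogue of Theorem \ref{gammadiamond}. The only differences are cosmetic or a matter of emphasis: you state the lemma in the fixed-point algebra as $\ldbrack\varphi\rdbrack_{\alpha}^{\mathbb{B}}=\gamma\bigl(\{p\in\mathbb{P}:R_{\alpha}^{\mathbb{P}^+}\Vdash_p\varphi\}\bigr)$ whereas the paper works in the quotient $\mathbb{P}^*/\mathcal{F}_{\gamma}$ (interchangeable by Theorem \ref{ABiffjAJB} and Corollary \ref{clasedegammaxesclasedex}), and the stage-by-stage bijection between the hierarchies that you defer as ``the structural core'' is exactly where the paper's proof of the analogous theorem spends almost all of its effort.
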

 
 In this section we will build a suitable generalization of Fitting model in the context of Residuated Logic (see Definition \ref{definitionresiduatedkripkehierarchy}) such that there exists an ``isomorphism'' (see Theorem \ref{isomorphismresiduatedclassic}) between it and an appropriate Heyting valued model (see Definition \ref{Heytingmodelsalafitting}). Furthermore,  we show in Corollary \ref{phiiffdiamondphionmodels} that if $\varphi$ is an $MR-\mathcal{L}_{\in}$-sentence without universal quantifiers that is valid in the Heyting model given in Definition \ref{Heytingmodelsalafitting}, then $\Diamond \varphi$ is valid in the Residuated Kripke model given in Definition \ref{definitionresiduatedkripkehierarchy}.

We show now the general structure of this subsection and how it relates to Fitting's original construction. We start by noticing that the double negation operator $\lnot \lnot$ is a modal (closure) operator on a Heyting algebra, and it is used to translate sentences (Corollary \ref{phiiffnotnotphi}) from the Boolean valued model $R^{\mathbb{B}}$ into the Intuitionistic Kripke model $\mathcal{V}^{\mathbb{P}^+}$. Fitting starts with an Intuitionistic Kripke model with constant universe $\mathcal{A}=(\mathbb{P}, \leq, \Vdash, D)$. The key points to notice are as follows: 
\begin{enumerate}
    \item The set $\mathbb{P}^+=\{A\subseteq \mathbb{P}: \text{A is hereditary}\}$ is a Heyting algebra.
    \item The operator double negation $\lnot \lnot$ is a modal operator on a Heyting algebra.
    \item The set $\mathcal{F}_{\lnot \lnot}:=\{x\in \mathbb{P}^+:\lnot \lnot x=1\}$ is a filter on $\mathbb{P}^+$ such that $\mathbb{P}^+/\mathcal{F}_{\lnot \lnot}$ is a Boolean algebra.
    \item $\lnot \lnot$ is used to translate sentences from the Boolean valued model into the Intuitionistic Kripke models (see Corollary \ref{phiiffnotnotphi} ).    
\end{enumerate}

Therefore, one could wonder if this kind of results can be obtain by starting with some $MR$-Kripke $\mathcal{L}$-model $\mathcal{A}=(\mathbb{P}, \leq, \delta, \Vdash, D)$ and then finding a suitable valued model such that an analogous of Corollary \ref{phiiffnotnotphi} holds. \\

Throughout this subsection, fix a Modal Residuated Kripke $\mathcal{L}$-model $\mathcal{A}=(\mathbb{P}, \leq, \delta, \Vdash, D)$ with $\delta:\mathbb{P}\rightarrow \mathbb{P}$ an idempotent conucleus that respects implications and the top element.

\begin{Def}[\cite{Moncayo2023}, Definition 3.2.15, cf. \cite{Fitting1969}, p. 166]\index{$\mathbb{P}^*$!-subset}\label{residuatedPsubset}
    We say that a function $f$ is a \textit{$\mathbb{P}^*$-subset of $\mathcal{A}$} if
    \begin{enumerate}
        \item $Dom(f)\subseteq D$
        \item $Ran(f)\subseteq \mathbb{P}^*=\{A\subseteq \mathbb{P}: A \text{ is strongly hereditary}\}$
    \end{enumerate}
    Recall Theorem \ref{P*isaresiduatedlattice} says that $\mathbb{P}^*$ is a Residuated Lattice with the order $\subseteq$ and the product
    \begin{center}
     $A\cdot B=\{c\in \mathbb{P}: \text{ there exist }a\in A, b\in B \text{ such that } c\geq a\cdot b \}$, where $A, B\in \mathbb{P}^*$  
    \end{center}
     and that Theorems \ref{gamma*} and \ref{gamma*isstandard} state that $\gamma:\mathbb{P}^*\rightarrow \mathbb{P}^*$ defined by 
    \begin{center}
         $\gamma(A):=\{p\in \mathbb{P}: \exists q\in A (\delta(q)\leq p)\}$
    \end{center}
    is an idempotent quantic nucleus that respects implications and the bottom element, that is, $\gamma$ is a standard quantic nucleus on $\mathbb{P^*}$.
     
\end{Def}

\begin{Def}[\cite{Moncayo2023}, Definition 3.2.16,  cf. \cite{Fitting1969}, p. 166]\index{$\gamma$!-regular}
    Let $\mathbb{Q}$ be any complete Residuated Lattice and $\gamma$ be any standard quantic nucleus on $\mathbb{Q}$. We call an element $x\in\mathbb{Q}$ \textit{$\gamma$-regular} if $\sim\sim\gamma(x)=x$. This definition generalizes the notion of \textbf{regular} sets in a Heyting algebra. We call a function with range $\mathbb{Q}$ \textit{$\gamma$-regular}, if every member of its range is $\gamma$-regular. 
\end{Def}

\begin{Def}[\cite{Moncayo2023}, Definition 3.2.17, cf. \cite{Fitting1969}, p. 166]\index{extensional function}
    We say that a function from $D$ to $\mathbb{P}^*$ is \textit{extensional} if, for each $g, h\in D$
    \begin{center}
        $f(g)\cdot \{p\in \mathbb{P}: \mathcal{A}\Vdash_p (g=h)\}\subseteq f(h)$
    \end{center}
    where $(g=h)$ is an abbreviation defined by:
    \begin{center}
        $(g=h):=\Diamond\sim(\exists x)\sim (x\in g\rightarrow x\in h))\& (\Diamond\sim(\exists x)\sim(x\in h\leftarrow x\in g))$.
    \end{center}    
    And we denote 
    \begin{center}
        $\mathcal{P}^{\mathbb{P}^*}(D):=\{f: f \text{ is a $\gamma$-regular and extensional $\mathbb{P}^*$-subset of }\mathcal{A}\}$
    \end{center}
\end{Def}

\begin{Def}[\cite{Moncayo2023}, Definition 3.2.18, cf. \cite{Fitting1969}, p. 166]\label{definitionresiduatedkripkehierarchy}
    We now define on induction on ordinals a class of $MR$-Kripke $\mathcal{L}_{\in}$-models $\mathcal{V}^{\mathbb{P}^*}_{\alpha}:=(\mathbb{P}, \leq,  \delta, \Vdash,R^{\mathbb{P}^*}_{\alpha})$ all with the same underlying order $(\mathbb{P},\leq)$ but changing the universe for each ordinal $\alpha$ as follows:
    \begin{enumerate}
    
        \item $\mathcal{V}^{\mathbb{P}^*}_0:=(\mathbb{P}, \leq,  \delta, \Vdash,R^{\mathbb{P}^*}_0)$ where $R^{\mathbb{P}^*}_0:=\emptyset$.
        
        \item $\mathcal{V}^{\mathbb{P}^*}_{\alpha+1}:=(\mathbb{P}, \leq,  \delta, \Vdash,R^{\mathbb{P}^*}_{\alpha+1})$ where $R^\mathbb{P^*}_{\alpha+1}:=R^{\mathbb{P}^*}_{\alpha}\cup \mathcal{P}^{\mathbb{P}^*}(R^\mathbb{P^*}_{\alpha})$ and $\mathcal{V}^{\mathbb{P}^*}_{\alpha+1}\Vdash_p f\in g $ is defined as follows: \\
        If $p\in \mathbb{P}$ and $f, g\in R^{\mathbb{P}^*}_{\alpha+1}$ then we have the following cases:
        \begin{enumerate}[label=\alph*.]
            \item If $f, g\in R^{\mathbb{P}^*}_{\alpha}$, then 
            \begin{center}
                $\mathcal{V}^{\mathbb{P}^*}_{\alpha+1}\Vdash_p (f\in g)$, if and only if, $\mathcal{V}^{\mathbb{P}^*}_{\alpha}\Vdash_p (f\in g)$.
            \end{center}
            
            \item If $f\in R^{\mathbb{P}^*}_{\alpha}$ and $g\in R^{\mathbb{P}^*}_{\alpha+1}\setminus R^{\mathbb{P}^*}_{\alpha}=\mathcal{P}^{\mathbb{P}^*}(R^{\mathbb{P}^*}_{\alpha})$, then 
            \begin{center}
                $\mathcal{V}^{\mathbb{P}^*}_{\alpha+1}\Vdash_p (f\in g)$, if and only if, $p\in g(f)$.
            \end{center}
            
            \item If $f\in R^{\mathbb{P}^*}_{\alpha+1}\setminus R^{\mathbb{P}^*}_{\alpha}=\mathcal{P}^{\mathbb{P}^*}(R^{\mathbb{P}^*}_{\alpha})$, then $\mathcal{V}^{\mathbb{P}^*}_{\alpha+1}\Vdash_p (f\in g)$, if and only if,
            
            \begin{center}               
                $p\in \bigvee\limits_{h\in dom(g)}P_h$
            \end{center}
            where            
           $$P_h:=g(h)\cdot \left(P_{f\subseteq h}\cdot P_{h\subseteq f}\right)$$    
           and
            \begin{align*}
                P_{f\subseteq h}:=
                 \bigcap\limits_{x\in R^{\mathbb{P}^*}_{\alpha}}(f(x)\rightarrow \{q\in \mathbb{P}: \mathcal{V}^{\mathbb{P}^*}_{\alpha} \Vdash_q \sim\sim\Diamond (x\in h)\})\\
                 P_{h\subseteq f}:=
                 \bigcap\limits_{x\in R^{\mathbb{P}^*}_{\alpha}}(f(x)\leftarrow \{q\in \mathbb{P}: \mathcal{V}^{\mathbb{P}^*}_{\alpha} \Vdash_q \sim\sim\Diamond (x\in h)\}).
            \end{align*}
        \end{enumerate}
        \item If $\alpha\not=0$ is a limit ordinal, then let $R_{\alpha}^{\mathbb{P}^*}:=\bigcup\limits_{\beta<\alpha} R_{\beta}^{\mathbb{P^*}}$ and given $f, g\in R_{\alpha}^{\mathbb{P}^*}$ take any $\eta<\alpha$ such that $f, g\in R_{\eta}^{\mathbb{P}}$ and let 
        \begin{center}
            $\mathcal{V}^{\mathbb{P}^*}_{\alpha}\Vdash_p (f\in g)$, if and only if, $\mathcal{V}^{\mathbb{P}^*}_{\eta}\Vdash_p (f\in g)$.
        \end{center}
    \end{enumerate}
    For every $\alpha$ we also define
    \begin{center}
        $\mathcal{V}^{\mathbb{P}^*}_{\alpha}\Vdash_{p} \bot$, if and only if, $p=\infty$.
    \end{center}
\end{Def}

\begin{Obs}[\cite{Moncayo2023}, Remark 3.2.19, cf. \cite{Fitting1969}, Remark 4.2]
        The expression  
        \begin{center}
                $\scalemath{0.9}
                {\bigcap\limits_{x\in R^{\mathbb{P}^*}_{\alpha}}(f(x)\rightarrow \{q\in \mathbb{P}: \mathcal{V}^{\mathbb{P}^*}_{\alpha} \Vdash_q \sim \sim\Diamond (x\in h)\})\cdot\bigcap\limits_{x\in R^{\mathbb{P}^*}_{\alpha}}(f(x)\leftarrow \{q\in \mathbb{P}: \mathcal{V}^{\mathbb{P}^*}_{\alpha} \Vdash_q \sim \sim\Diamond (x\in h)\})}$
        \end{center}       
        
        is an element in the Residuated Lattice $\mathbb{P}^*$, where $\cdot, \rightarrow, \leftarrow $ and $\bigcap$ are the operations on $\mathbb{P}^*$ as a Residuated Lattice (see Definition \ref{operationsstronglyhereditaty}).
\end{Obs}

\begin{Def}[\cite{Moncayo2023}, Definition 3.2.20, cf. \cite{Fitting1969}, p. 166]\label{V^P}
    Consider the $MR$-Kripke (class) $\mathcal{L}_{\in}$-model 
    \begin{center}
        $\mathcal{V}^{\mathbb{P}^*}:=(\mathbb{P}, \leq,  \delta, \Vdash, R^{\mathbb{P}^*})$, where $R^{\mathbb{P}^*}:=\bigcup\limits_{\alpha\in ON} R_{\alpha}^{\mathbb{P}^*}$ 
    \end{center}    
    and given $f, g\in R^{\mathbb{P}^*}$, take any $\eta\in ON$ such that $f, g\in R_{\eta}^{\mathbb{P}}$ and define 
    \begin{center}
        $\mathcal{V}^{\mathbb{P}^*}\Vdash_p (f\in g)$, if and only if, $\mathcal{V}^{\mathbb{P}^*}_{\eta}\Vdash_p (f\in g)$.
    \end{center}
\end{Def}

We need to see that this definition provides indeed a Modal Residuated Kripke model, that is, it satisfies the Definition \ref{ModalResiduatedKripkeModel} item 3. sub-items a., b. and c.. By definition of $\Vdash$, the model $\mathcal{V}^{\mathbb{P}^*}_{\alpha}$ satisfies condition c. and since $\infty$ is an element of every strongly hereditary set (see Definition \ref{stronglyhereditary}) it is straightforward to see that condition b. also holds. Therefore, we just need to check that a. holds.

\begin{Lema}
    [\cite{Moncayo2023}, Theorem 3.2.21.]
    For every $\alpha\in ON$, we have that if $p_i, q\in \mathbb{P}$ with $i\in I$,
    \begin{center}
        if $\mathcal{V}^{\mathbb{P}^*}_{\alpha}\Vdash_{p_i} (f\in g)$ for every $i\in I$ and $\bigwedge\limits_{i\in I} p_i\leq q$, then $\mathcal{V}^{\mathbb{P}^*}_{\alpha}\Vdash_{q} (f\in g)$.
    \end{center}
\end{Lema}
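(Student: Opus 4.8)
The plan is to argue by transfinite induction on $\alpha$, proving the equivalent statement that the forcing set
\[
S_\alpha(f,g):=\{p\in\mathbb{P}:\mathcal{V}^{\mathbb{P}^*}_{\alpha}\Vdash_p (f\in g)\}
\]
is an element of $\mathbb{P}^*$ (i.e.\ is strongly hereditary) for all $f,g\in R^{\mathbb{P}^*}_\alpha$; by Definition \ref{stronglyhereditary} this membership is precisely the desired conclusion, since strong hereditariness says exactly that $\bigwedge_{i\in I}c_i\leq d$ with all $c_i$ in the set forces $d$ into the set. The induction follows the case split of Definition \ref{definitionresiduatedkripkehierarchy}, and in each case I reduce $S_\alpha(f,g)$ either to a forcing set at a strictly earlier stage (handled by the induction hypothesis) or to an explicit element of the residuated lattice $\mathbb{P}^*$ (Theorem \ref{P*isaresiduatedlattice}).

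The base case $\alpha=0$ is vacuous since $R^{\mathbb{P}^*}_0=\emptyset$. At a nonzero limit $\alpha$, for $f,g\in R^{\mathbb{P}^*}_\alpha$ one picks $\eta<\alpha$ with $f,g\in R^{\mathbb{P}^*}_\eta$, and since $\mathcal{V}^{\mathbb{P}^*}_\alpha\Vdash_p(f\in g)$ iff $\mathcal{V}^{\mathbb{P}^*}_\eta\Vdash_p(f\in g)$, we get $S_\alpha(f,g)=S_\eta(f,g)\in\mathbb{P}^*$ by the induction hypothesis. For the successor step $\alpha+1$, cases (a) and (b) of the definition are immediate: in case (a) forcing at $\alpha+1$ collapses to forcing at $\alpha$, so $S_{\alpha+1}(f,g)=S_\alpha(f,g)\in\mathbb{P}^*$ by hypothesis; in case (b) we have $S_{\alpha+1}(f,g)=g(f)$, which lies in $\mathbb{P}^*$ because $g$ is a $\gamma$-regular extensional $\mathbb{P}^*$-subset, so its values are elements of $\mathbb{P}^*$ (Definition \ref{residuatedPsubset}).

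The main obstacle is case (c), where $f\in\mathcal{P}^{\mathbb{P}^*}(R^{\mathbb{P}^*}_\alpha)$ and $S_{\alpha+1}(f,g)=\bigvee_{h\in dom(g)}P_h$ with $P_h=g(h)\cdot(P_{f\subseteq h}\cdot P_{h\subseteq f})$. Here I must verify that each $P_h$ is a genuine element of $\mathbb{P}^*$, so that, since $\mathbb{P}^*$ is closed under arbitrary joins (Theorem \ref{P*isaresiduatedlattice}), the whole join is in $\mathbb{P}^*$. The key sub-point is that the auxiliary sets $\{q\in\mathbb{P}:\mathcal{V}^{\mathbb{P}^*}_\alpha\Vdash_q\sim\sim\Diamond(x\in h)\}$ are strongly hereditary: this follows from the induction hypothesis (atomic forcing of $(x\in h)$ at stage $\alpha$ is strongly hereditary) together with the strong-hereditariness lemma for compound $MR$-formulas, the $MR$-Kripke analogue of Lemma \ref{sentencesstronglyhereditary}, which lifts strong hereditariness from atomic formulas to arbitrary ones built with $\sim$, $\Diamond$ and the other connectives. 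Granting this, $P_{f\subseteq h}$ and $P_{h\subseteq f}$ are obtained from the $\mathbb{P}^*$-elements $f(x)$ and these auxiliary sets via the operations $\rightarrow$, $\leftarrow$ and $\bigcap$ of Definition \ref{operationsstronglyhereditaty}, hence lie in $\mathbb{P}^*$; likewise $P_h$ lies in $\mathbb{P}^*$ after one more product with $g(h)\in\mathbb{P}^*$. Thus $S_{\alpha+1}(f,g)=\bigvee_{h\in dom(g)}P_h\in\mathbb{P}^*$, completing the induction.
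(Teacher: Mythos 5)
Your proposal is correct and follows essentially the same route as the paper: transfinite induction on $\alpha$, trivial base and limit cases, cases (a) and (b) handled by the induction hypothesis and by $g(f)\in\mathbb{P}^*$ respectively, and case (c) by showing each $P_h$ (hence the join $\bigvee_{h}P_h$) lies in $\mathbb{P}^*$ using the induction hypothesis for the atomic set $\{q:\mathcal{V}^{\mathbb{P}^*}_{\alpha}\Vdash_q (x\in h)\}$ and closure of $\mathbb{P}^*$ under the relevant operations. The only cosmetic difference is that the paper makes the compound set $\{q:\mathcal{V}^{\mathbb{P}^*}_{\alpha}\Vdash_q\sim\sim\Diamond(x\in h)\}$ explicit as $\sim\sim\gamma(\cdot)$ via Theorems \ref{gammadiamond} and \ref{kripkecongruence}, whereas you invoke the general strong-hereditariness lemma for compound $MR$-formulas; both are legitimate and amount to the same thing.
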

\begin{proof}
    We prove this by transfinite induction. Since the cases for $\alpha=0$ and $\alpha$ a limit ordinal are trivial, we only consider what happens at the successor step, so let us suppose the following holds at $\alpha$:
    \begin{center}
        if $\mathcal{V}^{\mathbb{P}^*}_{\alpha}\Vdash_{p_i} (f\in g)$ for $i\in I$ and $\bigwedge\limits_{i\in I} p_i\leq q$, then $\mathcal{V}^{\mathbb{P}^*}_{\alpha}\Vdash_{q} (f\in g)$ (induction hypothesis),
    \end{center}
    and let us prove it at $\alpha+1$. Assume that 
    \begin{center}        $\mathcal{V}^{\mathbb{P}^*}_{\alpha+1}\Vdash_{p_i} (f\in g)$ for $i\in I$ and $\bigwedge\limits_{i\in I} p_i\leq q$.
    \end{center}
    We have three cases:
    \begin{enumerate}
            \item If $f, g\in R^{\mathbb{P}^*}_{\alpha}$, then we have the result by the induction hypothesis.
            
            \item If $f\in R^{\mathbb{P}^*}_{\alpha}$ and $g\in R^{\mathbb{P}^*}_{\alpha+1}\setminus R^{\mathbb{P}^*}_{\alpha}=\mathcal{P}^{\mathbb{P}^*}(R^{\mathbb{P}^*}_{\alpha})$, then
            \begin{center}                $\mathcal{V}^{\mathbb{P}^*}_{\alpha+1}\Vdash_{p_i} (f\in g)$ means that $p_i\in g(f)$.
            \end{center}
            but by definition of $g\in\mathcal{P}^{\mathbb{P}^*}(R^{\mathbb{P}^*}_{\alpha})$ we know that the codomain of $g$ is $\mathbb{P}^*$ and so $g(f)$ is strongly hereditary. Therefore, $q\in g(f)$ and thus 
            \begin{center}                $\mathcal{V}^{\mathbb{P}^*}_{\alpha+1}\Vdash_{q} (f\in g)$
            \end{center}

            \item If $f\in R^{\mathbb{P}^*}_{\alpha+1}\setminus R^{\mathbb{P}^*}_{\alpha}=\mathcal{P}^{\mathbb{P}^*}(R^{\mathbb{P}^*}_{\alpha})$, then
            \begin{center}                $\mathcal{V}^{\mathbb{P}^*}_{\alpha+1}\Vdash_{p_i} (f\in g)$ means that $p_i\in \bigvee\limits_{h\in dom(g)}P_h=\bigvee\limits_{h\in dom(g)}g(h)\cdot \left(P_{f\subseteq h}\cdot P_{h\subseteq f}\right)$
            \end{center}

            but 

            \begin{align*}          
                P_{f\subseteq h}:=&\bigcap\limits_{x\in R^{\mathbb{P}^*}_{\alpha}}(f(x)\rightarrow \{q\in \mathbb{P}: \mathcal{V}^{\mathbb{P}^*}_{\alpha} \Vdash_q \sim \sim\Diamond (x\in h)\})\\
                &=\bigcap\limits_{x\in R^{\mathbb{P}^*}_{\alpha}}(f(x)\rightarrow \sim\sim\gamma\{q\in \mathbb{P}: \mathcal{V}^{\mathbb{P}^*}_{\alpha} \Vdash_q  (x\in h)\}) \hspace{0.3cm} \mbox{(by Theorems }  \ref{gammadiamond} \mbox{ and } \ref{kripkecongruence}).
            \end{align*}
            By the induction hypothesis, the set $\{q\in \mathbb{P}: \mathcal{V}^{\mathbb{P}^*}_{\alpha} \Vdash_q  (x\in h)\}$ is strongly hereditary, and since $f\in \mathcal{P}^{\mathbb{P}^*}(R^{\mathbb{P}^*}_{\alpha})$, $f(x)$ is also strongly hereditary for every $x\in dom(g)$. Therefore, since the operations $\sim, \rightarrow, \cdot, \gamma$ and $\bigcap$ are all closed in $\mathbb{P}^*$ (see Theorems \ref{P*isaresiduatedlattice} and \ref{gamma*}), we have that $P_{f\subseteq h}\in \mathbb{P}^*$. By using a similar argument, we can show that $P_{h\subseteq f}\in \mathbb{P}^*$  and since $g(h)\in \mathbb{P}^*$, we have that $\bigvee\limits_{h\in dom(g)}P_h$ is strongly hereditary. Therefore, $q\in\bigvee\limits_{h\in dom(g)}P_h$ and thus 
            \begin{center}                  $\mathcal{V}^{\mathbb{P}^*}_{\alpha+1}\Vdash_{q} (f\in g)$
            \end{center}
    \end{enumerate}
\end{proof}

We now construct a Heyting valued model $(R^{\mathbb{H}}, \ldbrack \cdot\rdbrack^\mathbb{H})$ that is related to\linebreak  $\mathcal{V}^{\mathbb{P}^*}=(\mathbb{P}, \leq, R^{\mathbb{P}^*})$.

\begin{Def}[\cite{Moncayo2023}, Definition 3.2.22, cf. \cite{Fitting1969}, p. 164]
    Given a Heyting valued model $(R, \ldbrack \cdot\rdbrack^\mathbb{H})$, we say that a function $f:R\rightarrow \mathbb{H}$ is \textit{extensional} if for all $g, h\in R$,
    \begin{center}
        $f(g)\wedge \ldbrack \lnot(\exists x)\lnot(x\in g \rightarrow x\in h)\rdbrack^\mathbb{H}\land \ldbrack \lnot(\exists x)\lnot(x\in g \leftarrow x\in h)\rdbrack^\mathbb{H}\leq f(h)$
    \end{center}
\end{Def}

and we say that a function $f:R\rightarrow \mathbb{H}$ is \textit{regular} if $\lnot \lnot f(x)=f(x)$ for every $x\in R$.

\begin{Def}[\cite{Moncayo2023}, Definition 3.2.23, cf. \cite{Fitting1969}, p. 165]\label{Heytingmodelsalafitting}
    We now define on induction over the ordinals a class of Heyting valued models $(R_{\alpha}^{\mathbb{H}}, \ldbrack \cdot\rdbrack_{\alpha}^\mathbb{H})$
    \begin{enumerate}
    
        \item $R_{0}^{\mathbb{H}}:=\emptyset$ with $\ldbrack \cdot\rdbrack_{0}^\mathbb{H}:=\emptyset$.
        
        \item
        $R_{\alpha+1}^{\mathbb{H}}:=R_{\alpha}^{\mathbb{H}}\cup\{f:R_{\alpha}^{\mathbb{H}}\rightarrow \mathbb{H}: f \text{ is extensional and regular}\}$ and given $f, g\in R^\mathbb{H}_{\alpha+1}$, we have, for the definition of  $\ldbrack f\in g\rdbrack_{\alpha}^\mathbb{H}$, the following cases:
        \begin{enumerate}[label=\alph*.]
            \item If $f, g\in R^\mathbb{H}_{\alpha}$, define  $\ldbrack f\in g\rdbrack_{\alpha+1}^\mathbb{H}:=\ldbrack f\in g\rdbrack_{\alpha}^\mathbb{H}$.
            
            \item If $f\in R^\mathbb{H}_{\alpha}$ and $g\in R^\mathbb{H}_{\alpha+1}\setminus R^{\mathbb{H}}_{\alpha}$, define $\ldbrack f\in g\rdbrack_{\alpha+1}^\mathbb{H}:=g(f)$.
            
            \item If $f\in R^{\mathbb{H}}_{\alpha+1}\setminus R^{\mathbb{H}}_{\alpha}$, define
            \begin{center}
                $\ldbrack f\in g\rdbrack_{\alpha+1}^\mathbb{H}:= \bigvee\limits_{h\in dom(g)}\{g(h)\wedge \bigwedge\limits_{x\in R_{\alpha}^{\mathbb{H}}}(f(x)\leftrightarrow \ldbrack \lnot\lnot(x\in h)\rdbrack_{\alpha}^\mathbb{H})\}$
            \end{center}
            
            \item If $\alpha\not=0$ is a limit ordinal, then let $R_{\alpha}^{\mathbb{H}}:=\bigcup\limits_{\beta<\alpha} R_{\beta}^{\mathbb{H}}$ and given $f, g\in R_{\alpha}^{\mathbb{H}}$ take any $\eta<\alpha$ such that $f, g\in R_{\eta}^{\mathbb{H}}$ and let $\ldbrack f\in g\rdbrack_{\alpha}^\mathbb{H}:=\ldbrack f\in g\rdbrack_{\eta}^\mathbb{H}$.
        \end{enumerate}
        Now let 
        \begin{center}            $R^{\mathbb{H}}:=\bigcup\limits_{\alpha\in ON} R_{\alpha}^{\mathbb{H}}$ 
        \end{center}
        and given $f, g\in R^{\mathbb{H}}$ take any $\eta<\alpha$ such that $f, g\in R_{\eta}^{\mathbb{H}}$ and let $\ldbrack f\in g\rdbrack^\mathbb{H}:=\ldbrack f\in g\rdbrack_{\eta}^\mathbb{H}$.
    \end{enumerate}
\end{Def}

\begin{Obs} 
    The construction given above mimics Fitting's Definition, but with two main differences:
    \begin{enumerate}
        \item Instead of using a Boolean algebra, we consider a Heyting algebra.
        \item In condition 2. c., we consider the term $\ldbrack \lnot\lnot(x\in h)\rdbrack_{\alpha}^\mathbb{H}$ rather than $\ldbrack x\in h\rdbrack_{\alpha}^\mathbb{H}$. Clearly, in the Classical (Boolean) case, these expressions are equivalent, but in the Intuitionistic case they are not.
    \end{enumerate}
\end{Obs}

\begin{Def}[\cite{Moncayo2023}, Definition 3.2.25.]\index{$\gamma$!-dense}
     Let $\mathbb{Q}$ be any complete residuated lattice and $\gamma$ be any quantic nucleus on $\mathbb{Q}$. An element of $x\in \mathbb{Q}$ is called \textit{$\gamma$-dense} if $\gamma(x)=1_{\mathbb{Q}}$. 
\end{Def}

\begin{Obs}
    The definition given above generalizes the notion of \textbf{dense} sets in a Heyting algebra. We focus on the case where $\mathbb{Q}=\mathbb{P}^*$ and $\gamma$ is the quantic nucleus determined by $\delta$. Let $\mathcal{F}_{\gamma}$ be the collection of all $\gamma$-dense elements of $\mathbb{P}^*$. By Theorems \ref{condensedisfilter} and \ref{FgammaisHeyting} $\mathcal{F}_{\gamma}$ is a filter such that $\mathbb{P}^*/\mathcal{F}_{\gamma}$ is a  Heyting  algebra.
\end{Obs}

    \begin{Obs}\label{RemarkoperationsonH}
         Recall (see Definition \ref{Q/F} and Theorem \ref{FgammaisHeyting}) that the relation $\approx_{\mathcal{F}_{\gamma}}$ on $\mathbb{P}^*$ given by 
    \begin{center}
        $A\approx_{\mathcal{F}_{\gamma}} B$, if and only if, $A\rightarrow B\in \mathcal{F}_{\gamma}$ and $B\rightarrow A\in \mathcal{F}_{\gamma}$
    \end{center}
    is an equivalence relation. Also, we have that
    \begin{center}        $\mathbb{H}:=\mathbb{P}^*/\mathcal{F}_{\gamma}=\mathbb{P}^*/_{\approx_{\mathcal{F}_{\gamma}}}=\{|A|: A\in \mathbb{P}^*\}$
    \end{center}
     is a complete Heyting algebra. Furthermore, if $|A|, |B|\in \mathbb{H}$
    \begin{center}
        $|A|\leq |B|$ iff $A\rightarrow B\in \mathcal{F}_{\gamma}$\\
        $|A|\land |B|=|A\land B|=|A\cdot B|=|A|\cdot |B|$\\
        $|A|\lor |B|=|A\lor B|$\\
        $|A|\rightarrow |B|=|A\rightarrow B|$\\
        $|\sim A|=\lnot|A|$\\
        $|A|=|\gamma(A)|$ (see Corollary \ref{clasedegammaxesclasedex})\\
        $|\bigvee\limits_{i\in I}A_i|=\bigvee\limits_{i\in I}|A_i|$ (see Theorem \ref{unionclass})
    \end{center}   
    \end{Obs}

    \begin{Obs}[cf. \cite{Fitting1969}, Remark 5.1]
        The equality $|\bigwedge\limits_{i\in I}A_i|=\bigwedge\limits_{i\in I}|A_i|$ is not true in general, and thus $MR$-formulas with universal quantifiers behave poorly (since, in valued models, we usually interpret universal quantifiers as meets). This explains why we do not consider formulas with universal quantifiers in the following theorem.
    \end{Obs}

   The Heyting algebra $\mathbb{H}:=\mathbb{P}^*/\mathcal{F}_{\gamma}$ determines a sequence of valued models $(R_{\alpha}^{\mathbb{H}}, \ldbrack \cdot\rdbrack_{\alpha}^\mathbb{H})$ that is isomorphic to the sequence $\mathcal{V}^{\mathbb{P}^*}_{\alpha}=(\mathbb{P}, \leq,  \delta, \Vdash, R_{\alpha}^{\mathbb{P}^*})$ in the following sense:
    
    \begin{Teo}[\cite{Moncayo2023}, Theorem 3.2.29, cf. \cite{Fitting1969}, Chapter 15, Theorem 5.5]\label{isomorphismresiduatedclassic}
        For every $\alpha\in ON$, there exist a bijection between $R_{\alpha}^{\mathbb{P}^*}$ and $R_{\alpha}^{\mathbb{H}}$ (where if $f\in R_{\alpha}^{\mathbb{P}^*}$, $f'$ denotes the image of $f$ via this bijection) such that for every MR-$\mathcal{L}_{\in}$-formula with no universal quantifiers $\varphi(x_1, ..., x_n)$ and every $a_1, ..., a_n\in R_{\alpha}^{\mathbb{P}^*}$,
        \begin{center}
            $|\{p\in \mathbb{P}: \mathcal{V}^{\mathbb{P}^*}_{\alpha}\Vdash_p \varphi(a_1, ..., a_n)\}|=\ldbrack \varphi(a'_1, ..., a'_n)\rdbrack_{\alpha}^\mathbb{H}$
        \end{center}
    \end{Teo}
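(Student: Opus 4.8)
The plan is to prove the statement by transfinite induction on $\alpha$, at each stage extending the bijection $f\mapsto f'$ and then verifying the value-equation by a nested induction on the complexity of $\varphi$. First I would build the bijection by recursion on $\alpha$. At $\alpha=0$ both $R_0^{\mathbb{P}^*}$ and $R_0^{\mathbb{H}}$ are empty, so the empty map works, and at limit stages I take the union of the bijections constructed below. The real work is the successor step: for $f\in R_\alpha^{\mathbb{P}^*}$ I keep the image $f'$ supplied by the inductive hypothesis, and for a newly appearing $\gamma$-regular extensional $f\in\mathcal{P}^{\mathbb{P}^*}(R_\alpha^{\mathbb{P}^*})$ I set $f'\colon R_\alpha^{\mathbb{H}}\to\mathbb{H}$ by $f'(x'):=|f(x)|$ (and $0_{\mathbb{H}}$ off the image of $dom(f)$). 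I would then check that $f'$ is well defined, that it is extensional and regular (transporting the defining inequalities of $f$ through $|\cdot|$ using the correspondences of Remark~\ref{RemarkoperationsonH} and Theorem~\ref{ABiffjAJB}), and that $f\mapsto f'$ is a bijection of $R_{\alpha+1}^{\mathbb{P}^*}$ onto $R_{\alpha+1}^{\mathbb{H}}$.

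With the bijection fixed, I would prove $|\{p:\mathcal{V}^{\mathbb{P}^*}_\alpha\Vdash_p\varphi\}|=\ldbrack\varphi'\rdbrack_\alpha^{\mathbb{H}}$ by induction on the structure of $\varphi$ (no universal quantifiers). The connective cases are bookkeeping: strong conjunction uses Theorem~\ref{kripkecongruence} together with $|A\cdot B|=|A|\cdot|B|=|A|\land|B|$; weak conjunction, disjunction, implication and negation use the matching clauses of Theorem~\ref{kripkecongruence} and the fact that $|\cdot|$ preserves $\cap$, $\lor$, $\rightarrow$ and sends $\sim$ to $\lnot$ (Remark~\ref{RemarkoperationsonH}); the existential case combines Theorem~\ref{kripkecongruence} with Theorem~\ref{unionclass} to move $|\cdot|$ through the join; and $\Diamond\varphi$ collapses, since $|\{p:\Vdash_p\Diamond\varphi\}|=|\gamma\{p:\Vdash_p\varphi\}|=|\{p:\Vdash_p\varphi\}|$ by Theorem~\ref{gammadiamond} and Corollary~\ref{clasedegammaxesclasedex}, matching the interpretation of $\Diamond$ as the identity on $\mathbb{H}$.

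The hard part is the atomic membership case, split along the three clauses of Definition~\ref{definitionresiduatedkripkehierarchy}; clauses (a) and (b) reduce immediately to the inductive hypothesis and to $f'(x')=|f(x)|$, but clause (c) is the genuine obstacle. There I must match $|\bigvee_{h\in dom(g)}g(h)\cdot(P_{f\subseteq h}\cdot P_{h\subseteq f})|$ with $\bigvee_{h'}\{g'(h')\land\bigwedge_{x'}(f'(x')\leftrightarrow\ldbrack\lnot\lnot(x'\in h')\rdbrack)\}$. Passing $|\cdot|$ through the outer join is fine by Theorem~\ref{unionclass}, and $g(h)\cdot(\cdots)$ becomes $g'(h')\land|\cdots|$; the crux is the identity $|P_{f\subseteq h}|=\bigwedge_{x'}(f'(x')\rightarrow\ldbrack\lnot\lnot(x'\in h')\rdbrack)$ (and its dual for $P_{h\subseteq f}$), i.e.\ that $|\cdot|$ commutes with the infinitary meet of implications defining $P_{f\subseteq h}$. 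This is exactly where the quotient fails to preserve meets in general, so it is the step I expect to demand real care.

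The way around it is to observe that each factor $f(x)\rightarrow\{q:\Vdash_q\sim\sim\Diamond(x\in h)\}$ is $\gamma$-closed: by Theorems~\ref{gammadiamond} and \ref{kripkecongruence} the set $\{q:\Vdash_q\sim\sim\Diamond(x\in h)\}$ equals $\sim\sim\gamma\{q:\Vdash_q(x\in h)\}$, which lies among the fixed points of $\gamma$, and implications into fixed points of $\gamma$ stay in $\mathbb{P}^*_\gamma$ by Theorem~\ref{Qgammaisclosedunder}. Since meets in $\mathbb{P}^*_\gamma$ are computed as ordinary meets (Theorem~\ref{fixedpointsonQ}), the quotient $|\cdot|$ does preserve this particular meet, which unlocks the desired equality. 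Rewriting each factor as $f'(x')\rightarrow\lnot\lnot\ldbrack x'\in h'\rdbrack=f'(x')\rightarrow\ldbrack\lnot\lnot(x'\in h')\rdbrack$ via the inductive hypothesis and $|\sim\sim\gamma A|=\lnot\lnot|A|$ then finishes the case, after which $P_{f\subseteq h}\cdot P_{h\subseteq f}$ reassembles into the biconditional appearing on the Heyting side. Everything else follows the algebraic dictionary between $\mathbb{P}^*$ and $\mathbb{H}$ already established in Remark~\ref{RemarkoperationsonH}.
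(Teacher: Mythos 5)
Your overall architecture matches the paper's proof: the same transfinite induction on $\alpha$, the same definition $g'(f'):=|g(f)|$ of the correspondence, the same reduction of the connective and quantifier cases to Theorem~\ref{kripkecongruence}, Theorem~\ref{unionclass} and Remark~\ref{RemarkoperationsonH}, and the same identification of clause (c) of the atomic case as the crux. Where you genuinely diverge is in how you push $|\cdot|$ through the infinitary meet defining $P_{f\subseteq h}$. The paper first rewrites each factor as $\sim\sim\gamma(f(x))\rightarrow\sim\sim\gamma\{q:\Vdash_q(x\in h)\}$ using the $\gamma$-regularity of $f$, notes that each such factor is fixed by the double-negation nucleus (Example~\ref{doublenegationisquanticnucleus}), applies De Morgan (Theorem~\ref{cuantales1.5} item 4) to convert $\bigcap_x\sim\sim(\cdots)$ into $\sim\bigvee_x\sim(\cdots)$, passes $|\cdot|$ through the join via Theorem~\ref{unionclass}, and then undoes the De Morgan step inside $\mathbb{H}$. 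You instead observe that each factor is a fixed point of $\gamma$ (since $\sim\sim$ of anything lies in the fixed points once $\gamma$ respects the bottom element, and implications into fixed points stay fixed by Theorem~\ref{Qgammaisclosedunder}), so the meet is computed in $\mathbb{P}^*_{\gamma}$ as a plain intersection (Theorem~\ref{fixedpointsonQ}) and is preserved by the quotient. Both arguments are sound; yours is arguably cleaner because it never leaves the meet and makes visible the structural reason the computation works, but it silently uses that $|\cdot|$ restricted to $\mathbb{P}^*_{\gamma}$ is an order isomorphism onto $\mathbb{H}$ --- true, but deserving an explicit line via Theorem~\ref{ABiffjAJB} and the fact that $\gamma$ respects implications.

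The one place where your sketch is materially thinner than the paper is the bijectivity of $g\mapsto g'$ at successor stages, which you assert rather than argue. Injectivity is precisely where $\gamma$-regularity earns its keep: $|g(f)|=|h(f)|$ forces $g(f)=h(f)$ only because both values are equal to $\sim\sim\gamma$ of themselves. Surjectivity requires the non-obvious regularization $g(f):=\sim\sim\gamma(s(f))$ of chosen representatives $s(f)$ of $h(f')$, together with Lemma~\ref{doublenegationgammaisidempotent} to see that the resulting $g$ is $\gamma$-regular and hence an admissible preimage. Neither step is automatic, so they need the few lines the paper gives them.
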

    
    \begin{proof}
        We show this by induction on $\alpha$. $R_{0}^{\mathbb{P}^*}$ and $ R_{0}^{\mathbb{H}}$ are the same, so it holds for $0$.
        
        Assume that there exists such a bijection $'$ between $R_{\alpha}^{\mathbb{P}^*}$ and $R_{\alpha}^{\mathbb{H}}$ (induction hypothesis 1).
        
        Take $g\in R_{\alpha+1}^{\mathbb{P}^*}\setminus R_{\alpha}^{\mathbb{P}^*}=\mathcal{P}^{\mathbb{P}^*}(R_{\alpha}^{\mathbb{P}^*})$ and define the function $g': R_{\alpha}^{\mathbb{H}}\rightarrow \mathbb{H}$ in the following way: Since $':R_{\alpha}^{\mathbb{P}^*}\rightarrow R_{\alpha}^{\mathbb{H}}$ is onto, every element $F\in R^{\mathbb{H}}_{\alpha}$ has the form $F=f'$ for some $f\in R^{\mathbb{P}^*}_{\alpha}$. Therefore, we can define $g'$ by
        \begin{center}
            $g'(f'):=|g(f)|\in \mathbb{H}=\mathbb{P}^*/\mathcal{F}_{\gamma}$ for every $f'\in R^{\mathbb{H}}_{\alpha} $.
        \end{center}

        For now we assume that $g$ is extensional if and only if $g'$ is extensional. This will be proved at the end of this theorem.
        
        Let us see that this map is injective. Take $g, h\in R_{\alpha+1}^{\mathbb{P}^*}\setminus R_{\alpha}^{\mathbb{P}^*}=\mathcal{P}^{\mathbb{P}^*}(R_{\alpha}^{\mathbb{P}^*})$ such that $|g(f)|=|h(f)|$ for every $f\in R_{\alpha}^{\mathbb{P}^*}$. Then, by definition of $=$ in $\mathbb{P}^*/\mathcal{F}_{\gamma}$, we have, in particular
        
        \begin{center}
            $g(f)\rightarrow h(f)\in \mathcal{F}_{\gamma}$
        \end{center}
        that is, by definition of $\mathcal{F}_{\gamma}$, 
        \begin{center}
            $\gamma(g(f)\rightarrow h(f))=1$
        \end{center}
        and since by Corollary \ref{gammaimplication}, $\gamma(g(f)\rightarrow h(f))\leq g(f)\rightarrow \gamma(h(f))$, we have that
        \begin{center}
            $g(f)\rightarrow \gamma(h(f))=1$
        \end{center}
        By Theorem \ref{cuantales2} item 2.,  $\gamma(h(f))\leq (\sim\sim \gamma(h(f)))$ and by Theorem \ref{cuantales1} item 9.,
        \begin{center}
            $g(f)\rightarrow (\sim\sim\gamma(h(f)))=1$
        \end{center}        
        But $h$ is a $\gamma$-regular function, so
        \begin{center}
            $g(f)\rightarrow h(f)=1$
        \end{center}
        which implies, by Theorem \ref{cuantales1} item 1., that $g(f)\subseteq h(f)$. In a similar fashion we can show that $h(f)\subseteq g(f)$ and therefore we conclude $g(f)=h(f)$ for every $f\in R^{\mathbb{P}^*}_{\alpha}$, that is, $g=h$.
        
        To see that the map is surjective, let us take $h\in R_{\alpha+1}^{\mathbb{H}}\setminus R_{\alpha}^{\mathbb{H}}=\mathcal{P}^{\mathbb{H}}(R_{\alpha}^{\mathbb{H}})$, that is, $h:R_{\alpha}^{\mathbb{H}}\rightarrow \mathbb{H}$ is a regular and extensional function. We will construct a function $g\in \mathcal{P}^{\mathbb{P}^*}(R_{\alpha}^{\mathbb{P}^*})$ such that $g'=h$, that is, $g'(f')=h(f')$ for every $f'\in R_{\alpha}^{\mathbb{H}}$. Let $s$ be any function from $R_{\alpha}^{\mathbb{P}^*}$ to $\mathbb{P}^*$ such that
        
        \begin{center}
            for $f\in R_{\alpha}^{\mathbb{P}^*}$, $s(f)$ is a representative of the class $h(f')\in \mathbb{H}=\mathbb{P}^*/\mathcal{F}_{\gamma}$,
        \end{center}
        
        that is, $h(f')=|s(f)|$. Let $g$ be the function defined by
        \begin{center}
            $g(f)=\sim\sim\gamma(s(f))$ for $f\in R_{\alpha}^{\mathbb{P}^*}$.
        \end{center} 
        Then, by Theorem \ref{doublenegationgammaisidempotent}, $g$ is $\gamma$-regular and since its domain is $R_{\alpha}^{\mathbb{P}^*}$, we have that $g\in R_{\alpha+1}^{\mathbb{P}^*}\setminus R_{\alpha}^{\mathbb{P}^*}=\mathcal{P}^{\mathbb{P}^*}(R_{\alpha}^{\mathbb{P}^*})$. We want to see that $g'=h$, so let us take $F\in R_{\alpha}^{\mathbb{H}}$,  by the surjectiveness of $'$ there exists $f\in R^{\mathcal{P}^*}_\alpha$ such that $F=f'$. So, 
        
        \begin{align*}
            g'(f')&= |g(f)| \hspace{0.5cm} &&\mbox{(by definition of $g'$)} \\
            &=|\sim\sim\gamma (s(f))|\hspace{0.5cm} &&\mbox{(by definition of $g$)} \\
            &= \lnot\lnot|\gamma s(f)| \hspace{0.5cm} &&\mbox{(by  Remark~\ref{RemarkoperationsonH}} 
            \\&= \lnot\lnot|s(f)| \hspace{0.5cm} &&\mbox{(by  Corollary \ref{clasedegammaxesclasedex})} \\
            &= \lnot\lnot h(f')\hspace{0.5cm} &&\mbox{($s(f)$ is a representative of the class $h(f')$) }\\
            &= h(f')\hspace{0.5cm} &&\mbox{($h$ is regular function)}.
        \end{align*}
        
        Then, $g'=h$ and the function $'$ is surjective.
        
        By the induction hypothesis 1, we may assume that for every $MR-\mathcal{L}_{\in}$-formula with no universal quantifiers $\varphi(x_1, ..., x_n)$ and every $a_1, ..., a_n\in R_{\alpha}^{\mathbb{P}^*}$,
        
        \begin{center}
            $|\{p\in \mathbb{P}: \mathcal{V}^{\mathbb{P}^*}_{\alpha}\Vdash_p \varphi(a_1, ..., a_n)\}|=\ldbrack \varphi(a'_1, ..., a'_n)\rdbrack_{\alpha}^\mathbb{H}$
        \end{center}
        
        We will show that this result also holds for $R_{\alpha+1}^{\mathbb{P}^*}$ by induction on formulas. We start with the atomic case. Let $f, g\in R_{\alpha+1}^{\mathbb{P}^*}$. We have three cases:
        
        \begin{enumerate}
            \item If $f, g\in R_{\alpha}^{\mathbb{P}^*}$, then we have the result by the induction hypothesis 1.
            
            \item If $f\in R_{\alpha}^{\mathbb{P}^*}$ and $g\in R_{\alpha+1}^{\mathbb{P}^*}\setminus R_{\alpha}^{\mathbb{P}^*}$, then            
            \begin{align*}
                \ldbrack f'\in g'\rdbrack_{\alpha+1}^\mathbb{H}&= g'(f') \hspace{0.5cm} &&\mbox{(by definition of $\ldbrack \cdot \in \cdot\rdbrack_{\alpha+1}^\mathbb{H}$)} \\
                 &=|g(f)|\hspace{0.5cm} &&\mbox{(by definition of $g'$)} \\
                &= |\{p\in\mathbb{P}: p\in g(f)\}| \hspace{0.5cm} &&\mbox{(by definition of $g(f)$)}\\
                &=|\{p\in\mathbb{P}:\mathcal{V}_{\alpha+1}^{\mathbb{P}}\Vdash_p f\in g\}| \hspace{0.5cm} \small &&\mbox{(by definition of $\mathcal{V}_{\alpha+1}^{\mathbb{P}^*}\Vdash_p f\in g$)}
            \end{align*}
            
            \item If $f\in R_{\alpha+1}^{\mathbb{P}^*}$, recall that we denote 
            
            \begin{align*}
                &P_h:=g(h)\cdot \left(P_{f\subseteq h}\cdot P_{h\subseteq f}\right)\\
                &\scalemath{0.85}{:=g(h)\cdot \left(\bigcap\limits_{x\in R^{\mathbb{P}^*}_{\alpha}}(f(x)\rightarrow \{q\in \mathbb{P}: \mathcal{V}^{\mathbb{P}^*}_{\alpha} \Vdash_q \sim\sim\Diamond (x\in h)\})\cdot\bigcap\limits_{x\in R^{\mathbb{P}^*}_{\alpha}}(f(x)\leftarrow \{q\in \mathbb{P}: \mathcal{V}^{\mathbb{P}^*}_{\alpha} \Vdash_q \sim\sim\Diamond (x\in h)\})\right)}
            \end{align*}
                        
            and notice that, by Definition~\ref{definitionresiduatedkripkehierarchy} 2. c., 
            \begin{center}
                $\{p\in \mathbb{P}: \mathcal{V}_{\alpha+1}^{\mathbb{P}^*}\Vdash_p (f\in g)\}=\bigvee\limits_{h\in dom(g)}P_h$.
            \end{center}

            Furthermore, 
            
            \begin{align*}          
                &P_{f\subseteq h}=\bigcap\limits_{x\in R^{\mathbb{P}^*}_{\alpha}}(f(x)\rightarrow \{q\in \mathbb{P}: \mathcal{V}^{\mathbb{P}^*}_{\alpha} \Vdash_q \sim \sim\Diamond (x\in h)\})\\
                &=\bigcap\limits_{x\in R^{\mathbb{P}^*}_{\alpha}}(f(x)\rightarrow \sim\sim\gamma\{q\in \mathbb{P}: \mathcal{V}^{\mathbb{P}^*}_{\alpha} \Vdash_q  (x\in h)\}) \hspace{0.5cm} &&\mbox{\small(by Theorems }  \ref{gammadiamond} \mbox{ and } \ref{kripkecongruence})\\
                &=\bigcap\limits_{x\in R^{\mathbb{P}^*}_{\alpha}}(\sim\sim\gamma(f(x))\rightarrow \sim\sim\gamma\{q\in \mathbb{P}: \mathcal{V}^{\mathbb{P}^*}_{\alpha} \Vdash_q  (x\in h)\}) \hspace{0.5cm} &&\mbox{(since $f$ is $\gamma$-regular) }\\
                &=\bigcap\limits_{x\in R^{\mathbb{P}^*}_{\alpha}}\sim\sim(\sim\sim\gamma(f(x))\rightarrow \sim\sim\gamma\{q\in \mathbb{P}: \mathcal{V}^{\mathbb{P}^*}_{\alpha} \Vdash_q  (x\in h)\}) \hspace{0.5cm} &&\mbox{(by Example \ref{doublenegationisquanticnucleus})}\\
                &=\sim\bigvee\limits_{x\in R^{\mathbb{P}^*}_{\alpha}}\sim(\sim\sim\gamma(f(x))\rightarrow \sim\sim\gamma\{q\in \mathbb{P}: \mathcal{V}^{\mathbb{P}^*}_{\alpha} \Vdash_q  (x\in h)\}) \hspace{0.5cm} &&\mbox{(by Theorem \ref{cuantales1.5} item 4.)}
            \end{align*}
            
            Thus,
            
            \begingroup
            \addtolength{\jot}{1em}    
            \begin{align*} 
                &|P_{f\subseteq h}|=\left|\bigcap\limits_{x\in R^{\mathbb{P}^*}_{\alpha}}(f(x)\rightarrow \{q\in \mathbb{P}: \mathcal{V}^{\mathbb{P}^*}_{\alpha} \Vdash_q \sim \sim\Diamond (x\in h)\})\right|\\ &=\left|\sim\bigvee\limits_{x\in R^{\mathbb{P}^*}_{\alpha}}\sim(\sim\sim\gamma(f(x))\rightarrow \sim\sim\gamma\{q\in \mathbb{P}: \mathcal{V}^{\mathbb{P}^*}_{\alpha} \Vdash_q  (x\in h)\})\right|\\ 
                &=\lnot\bigcup\limits_{x\in R^{\mathbb{P}^*}_{\alpha}}\lnot(\lnot\lnot|\gamma(f(x))|\rightarrow \lnot\lnot|\{q\in \mathbb{P}: \mathcal{V}^{\mathbb{P}^*}_{\alpha} \Vdash_q  (x\in h)\}|)\hspace{2cm}\mbox{(by Remark }\ref{RemarkoperationsonH}) \\  
                &=\bigcap\limits_{x\in R^{\mathbb{P}^*}_{\alpha}}\lnot\lnot(\lnot\lnot|\gamma(f(x))|\rightarrow \lnot\lnot|\{q\in \mathbb{P}: \mathcal{V}^{\mathbb{P}^*}_{\alpha} \Vdash_q  (x\in h)\}|) \hspace{.5cm} \mbox{(by Theorem \ref{cuantales1.5} item 4.)}                  \\ 
                &=\bigcap\limits_{x\in R^{\mathbb{P}^*}_{\alpha}}\lnot\lnot|\gamma(f(x))|\rightarrow \lnot\lnot|\{q\in \mathbb{P}: \mathcal{V}^{\mathbb{P}^*}_{\alpha} \Vdash_q  (x\in h)\}| \hspace{2.5cm} \mbox{(by Example \ref{doublenegationisquanticnucleus})} \\
                &=\bigcap\limits_{x\in R^{\mathbb{P}^*}_{\alpha}}|\sim\sim\gamma(f(x))|\rightarrow \lnot\lnot|\{q\in \mathbb{P}: \mathcal{V}^{\mathbb{P}^*}_{\alpha} \Vdash_q  (x\in h)\}| \hspace{2.4cm} \mbox{(by definition of $\lnot$)} 
                \\
                &=\bigcap\limits_{x\in R^{\mathbb{P}^*}_{\alpha}}|f(x)|\rightarrow \lnot\lnot|\{q\in \mathbb{P}: \mathcal{V}^{\mathbb{P}^*}_{\alpha} \Vdash_q  (x\in h)\}| \hspace{4.6cm} \mbox{($f$ is $\gamma$-regular)}\\
                &=\bigcap\limits_{x\in R^{\mathbb{P}^*}_{\alpha}}f'(x')\rightarrow \lnot\lnot\ldbrack  x'\in h'\rdbrack_{\alpha}^\mathbb{H} \hspace{0.5cm} \mbox{(by the induction hypothesis 1. and the definition of $f'$) }\\
                &=\bigcap\limits_{x\in R^{\mathbb{P}^*}_{\alpha}}f'(x')\rightarrow \ldbrack  \lnot\lnot (x'\in h')\rdbrack_{\alpha}^\mathbb{H} \hspace{6.2cm} \mbox{(by definition of $\lnot$) }
            \end{align*}
            \endgroup
            
            In a similar way we can prove that
            \begin{center}
                $\scalemath{0.95}{|P_{h\subseteq f}|=\left|\bigcap\limits_{x\in R^{\mathbb{P}^*}_{\alpha}}(f(x)\leftarrow \{q\in \mathbb{P}: \mathcal{V}^{\mathbb{P}^*}_{\alpha} \Vdash_q \sim \sim\Diamond (x\in h)\})\right|=\bigcap\limits_{x\in R^{\mathbb{P}^*}_{\alpha}}f'(x')\leftarrow \ldbrack  \lnot\lnot (x'\in h')\rdbrack_{\alpha}^\mathbb{H}}$
            \end{center}
            
            Thus,

            \begin{align*}            
                \scalemath{0.9}{|P_h|}&=\left|g(h)\cdot \left(P_{f\subseteq h}\cdot P_{h\subseteq f}\right)\right| =|g(h)|\cdot (|P_{f\subseteq h}|\cdot |P_{h\subseteq f}|)\hspace{3cm} \mbox{ (by Remark \ref{RemarkoperationsonH}) }\\
                &\scalemath{0.8}{=|g(h)|\cdot \left(\bigcap\limits_{x\in R^{\mathbb{P}^*}_{\alpha}}f'(x')\rightarrow \ldbrack   \lnot\lnot (x'\in h')\rdbrack_{\alpha}^\mathbb{H}\cdot \bigcap\limits_{x\in R^{\mathbb{P}^*}_{\alpha}}f'(x')\leftarrow \ldbrack   \lnot\lnot (x'\in h')\rdbrack_{\alpha}^\mathbb{H}\right) \hspace{0.3cm} \mbox{ (by the previous computations)}}\\
                &\scalemath{0.85}{=|g(h)|\cap \left(\bigcap\limits_{x\in R^{\mathbb{P}^*}_{\alpha}}f'(x')\rightarrow \ldbrack   \lnot\lnot (x'\in h')\rdbrack_{\alpha}^\mathbb{H}\cap \bigcap\limits_{x\in R^{\mathbb{P}^*}_{\alpha}}f'(x')\leftarrow \ldbrack   \lnot\lnot (x'\in h')\rdbrack_{\alpha}^\mathbb{H}\right) \hspace{0.3cm} \mbox{ (since $\cap=\cdot$ in  $\mathbb{P}^*/\mathcal{F}_{\gamma}$)}}\\
                &\scalemath{0.9}{=g'(h')\cap \bigcap\limits_{x\in R_{\alpha}^{\mathbb{P}^*}}(f'(x')\leftrightarrow \ldbrack   \lnot\lnot (x'\in h')\rdbrack_{\alpha}^\mathbb{H}) \hspace{3.2cm} \mbox{ (properties of $\land$ and definition of $g'$)}}
            \end{align*}
            
            Therefore, 
            
            \begin{align*}
                \ldbrack  f'\in g'\rdbrack_{\alpha+1}^\mathbb{H}&=\bigcup\limits_{h'\in dom(g')} (g'(h')\cap \bigcap\limits_{x\in R_{\alpha}^{\mathbb{P}^*}}(f'(x')\leftrightarrow \ldbrack   \lnot\lnot (x'\in h')\rdbrack_{\alpha}^\mathbb{H})\hspace{0.5cm} &&\mbox{ (by definition of $\ldbrack  \cdot\in \cdot\rdbrack_{\alpha+1}^\mathbb{H}$)}\\
                &=\bigcup\limits_{h'\in dom(g')}|P_h|\\
                &=\left| \ \bigvee\limits_{h\in dom(g)}P_h \ \right|\hspace{0.5cm} &&\mbox{(by Theorem \ref{unionclass})}\\
                &=|\{p\in \mathbb{P}: \mathcal{V}_{\alpha+1}^{\mathbb{P}^*}\Vdash_p (f\in g)\}|&&\mbox{by Definition~\ref{definitionresiduatedkripkehierarchy} 2. c.}\\
            \end{align*}
        \end{enumerate}

        Now we have the result for atomic $MR$-formulas. It is straightforward to prove the result for the rest of $MR$-formulas by induction on formulas and by using Theorems \ref{kripkecongruence} and \ref{gammadiamond}. For this reason, we show it only for the product and the existential quantifier: Assume that $\varphi(x_1, ..., x_n)$ and $\psi(x_1, ..., x_n)$ are $MR-\mathcal{L}_{\in}$-formulas such that for all $a_1, ..., a_n\in R_{\alpha}^{\mathbb{P}^*}$
        
        \begin{center}
            $|\{p\in \mathbb{P}: \mathcal{V}^{\mathbb{P}^*}_{\alpha+1}\Vdash_p \varphi(a_1, ..., a_n)\}|=\ldbrack \varphi(a'_1, ..., a'_n)\rdbrack_{\alpha+1}^\mathbb{H}$ and $|\{p\in \mathbb{P}: \mathcal{V}^{\mathbb{P}^*}_{\alpha+1}\Vdash_p \psi(a_1, ..., a_n)\}|=\ldbrack \psi(a'_1, ..., a'_n)\rdbrack_{\alpha+1}^\mathbb{H}$  (induction hypothesis 2).
        \end{center}
        
        Then,        
        \begin{align*}
            &\ldbrack (\varphi\&\psi)(a'_1, ..., a'_n)\rdbrack_{\alpha+1}^\mathbb{H}=\ldbrack \varphi(a'_1, ..., a'_n)\rdbrack_{\alpha+1}^\mathbb{H}\cdot \ldbrack \psi(a'_1, ..., a'_n)\rdbrack_{\alpha+1}^\mathbb{H}\\
            &=|\{p\in \mathbb{P}: \mathcal{V}^{\mathbb{P}^*}_{\alpha+1}\Vdash_p \varphi(a_1, ..., a_n)\}|\cdot |\{p\in \mathbb{P}: \mathcal{V}^{\mathbb{P}^*}_{\alpha+1}\Vdash_p \psi(a_1, ..., a_n)\}| \hspace{0.3cm} &&\mbox{ \small(by induction hypothesis 2)} \\
            &=|\{p\in \mathbb{P}: \mathcal{V}^{\mathbb{P}^*}_{\alpha+1}\Vdash_p (\varphi\&\psi)(a_1, ..., a_n)\}| \hspace{0.5cm} &&\mbox{ (by Theorem \ref{kripkecongruence}) and}\\ & &&\mbox{Remark~\ref{RemarkoperationsonH})} 
        \end{align*}

        Take an $MR-\mathcal{L}_{\in}$-formula $\varphi(x_1, ..., x_n)$ and $a_1, ..., a_n\in R_{\alpha+1}^{\mathbb{P}^*}$.
        Assume that for every $a\in R_{\alpha+1}^{\mathbb{P}^*}$, $\varphi(a_1, ..., a_n, a)$ satisfies
        \begin{center}
            $|\{p\in \mathbb{P}: \mathcal{V}^{\mathbb{P}^*}_{\alpha+1}\Vdash_p \varphi(a_1, ..., a_n, a)\}|=\ldbrack (\varphi(a'_1, ..., a'_n, a')\rdbrack_{\alpha+1}^\mathbb{H}$ (induction hypothesis 3).
        \end{center}        
        Then,
        \begin{align*}
            \ldbrack \exists x\varphi(a'_1, ..., a'_n, x)\rdbrack_{\alpha+1}^\mathbb{H}&=\bigvee\limits_{a'\in R^{\mathbb{H}}_{\alpha+1}}\ldbrack (\varphi(a'_1, ..., a'_n, a')\rdbrack_{\alpha+1}^\mathbb{H}\hspace{0.7cm} \mbox{(by definition of $\ldbrack \exists x\varphi(a'_1, ..., a'_n, x)\rdbrack_{\alpha+1}^\mathbb{H}$)} \\
            &=\bigvee\limits_{a'\in R^{\mathbb{H}}_{\alpha+1}}|\{p\in \mathbb{P}: \mathcal{V}^{\mathbb{P}^*}_{\alpha+1}\Vdash_p \varphi(a_1, ..., a_n, a)\}| \hspace{0.5cm} \mbox{(by induction hypothesis 3)} \\
            &=|\{p\in \mathbb{P}: \mathcal{V}^{\mathbb{P}^*}_{\alpha+1}\Vdash_p  \exists x\varphi(a_1, ..., a_n, x)\}| \hspace{2.5cm} \mbox{(by Theorem \ref{kripkecongruence}) } 
        \end{align*}
     \\ \\ \indent    
        Now, let us prove that $g$ is extensional if and only if $g'$ is extensional.
    \\ \\         
        \indent If $g$ is extensional, then, for every $f, h\in R^{\mathbb{P}^*}_{\alpha}$, we have that
        \begin{center}
        $g(f)\cdot \{p\in \mathbb{P}: \mathcal{V}^{\mathbb{P}^*}_{\alpha}\Vdash_p (f=h)\}\subseteq g(h)$,
        \end{center}
        and by Theorem \ref{cuantales1} item 1., 
        \begin{center}
        $(g(f)\cdot \{p\in \mathbb{P}: \mathcal{V}^{\mathbb{P}^*}_{\alpha}\Vdash_p (f=h)\})\rightarrow g(h)=1$.
        \end{center}
        Thus, 
        \begin{center}
            $(g(f)\cdot \{p\in \mathbb{P}: \mathcal{V}^{\mathbb{P}^*}_{\alpha}\Vdash_p (f=h)\})\rightarrow g(h)\in \mathcal{F}_{\gamma}$. 
        \end{center}      
        By definition of $\leq$ on $\mathbb{P}^*/\mathcal{F}_{\gamma}$, we get
        \begin{center}
        $|g(f)\cdot \{p\in \mathbb{P}: \mathcal{V}^{\mathbb{P}^*}_{\alpha}\Vdash_p (f=h)\}|\leq |g(h)|$
        \end{center}
        by Remark \ref{RemarkoperationsonH}, this implies that
        \begin{center}
        $|g(f)|\land |\{p\in \mathbb{P}: \mathcal{V}^{\mathbb{P}^*}_{\alpha}\Vdash_p (f=h)\}|\leq |g(h)|$.
        \end{center}
        Recall that        
        \begin{center}
            $(g=h):=\Diamond\sim(\exists x)\sim (x\in g\rightarrow x\in h))\& (\Diamond\sim(\exists x)\sim(x\in h\leftarrow x\in g))$.
        \end{center} 
        Thus, by Theorems \ref{kripkecongruence} and \ref{gammadiamond},
        \begin{align*}            
	    &|\{p\in \mathbb{P}: \mathcal{V}^{\mathbb{P}^*}_{\alpha}\Vdash_p (f=h)\}|\\        &=|\{p\in \mathbb{P}: \mathcal{V}^{\mathbb{P}^*}_{\alpha}\Vdash_p (\Diamond\sim(\exists x)\sim (x\in g\rightarrow x\in h))\& (\Diamond\sim(\exists x)\sim(x\in h\leftarrow x\in g)))\}| \hspace{0.5cm} \mbox{} \\
            &=|\gamma\{p\in \mathbb{P}: \mathcal{V}^{\mathbb{P}^*}_{\alpha}\Vdash_p (\sim(\exists x)\sim (x\in g\rightarrow x\in h))\}|\cdot |\gamma\{p\in \mathbb{P}: \mathcal{V}^{\mathbb{P}^*}_{\alpha}\Vdash_p (\sim(\exists x)\sim (x\in g\leftarrow x\in h))\}| \hspace{0.5cm} \mbox{} 
        \end{align*}
        Notice that
        \begin{align*}
            &|\gamma\{p\in \mathbb{P}: \mathcal{V}^{\mathbb{P}^*}_{\alpha}\Vdash_p (\sim(\exists x)\sim (x\in g\rightarrow x\in h))\}|\\
            &=|\{p\in \mathbb{P}: \mathcal{V}^{\mathbb{P}^*}_{\alpha}\Vdash_p (\sim(\exists x)\sim (x\in g\rightarrow x\in h))\}|\hspace{1cm} &&\mbox{(by Corollary \ref{clasedegammaxesclasedex})}\\
	    &=\ldbrack \sim(\exists x)\sim (x\in g\rightarrow x\in h))\rdbrack_{\alpha}^\mathbb{H} \hspace{0.5cm} &&\mbox{(by induction hypothesis 1.)} 
        \end{align*}
        In a similar way, we prove that 
        \begin{center}
            $|\gamma\{p\in \mathbb{P}: \mathcal{V}^{\mathbb{P}^*}_{\alpha}\Vdash_p (\sim(\exists x)\sim (x\in g\leftarrow x\in h))\}|=\ldbrack \sim(\exists x)\sim (x\in g\leftarrow x\in h))\rdbrack_{\alpha}^\mathbb{H}$
        \end{center}
        Therefore, 
        \begin{center}
            $|\{p\in \mathbb{P}: \mathcal{V}^{\mathbb{P}^*}_{\alpha}\Vdash_p (f=h)\}|=\ldbrack \sim(\exists x)\sim (x\in g\rightarrow x\in h))\rdbrack_{\alpha}^\mathbb{H}\land \ldbrack\sim(\exists x)\sim(x\in h\leftarrow x\in g)\rdbrack_{\alpha}^\mathbb{H}$
        \end{center}
        Thus, by definition of $g'$
        \begin{center}
        $g'(f')\land \ldbrack \sim(\exists x)\sim (x\in g\rightarrow x\in h))\rdbrack_{\alpha}^\mathbb{H}\land \ldbrack\sim(\exists x)\sim(x\in h\leftarrow x\in g)\rdbrack_{\alpha}^\mathbb{H}\leq g'(h')$
        \end{center}
        which proves that $g'$ is extensional. 

        On the other hand, if $g'$ is extensional, by using (backwards) the argument given above, we get that
        \begin{center}
            $(g(f)\cdot \{p\in \mathbb{P}: \mathcal{V}^{\mathbb{P}^*}_{\alpha}\Vdash_p (f=h)\})\rightarrow g(h)\in \mathcal{F}_{\gamma}$. 
        \end{center}
        which means
        \begin{center}
        $\gamma((g(f)\cdot \{p\in \mathbb{P}: \mathcal{V}^{\mathbb{P}^*}_{\alpha}\Vdash_p (f=h)\})\rightarrow g(h))=1$
        \end{center}
        but, since $\gamma$ respects implications, we get
        \begin{center}
        $(g(f)\cdot \{p\in \mathbb{P}: \mathcal{V}^{\mathbb{P}^*}_{\alpha}\Vdash_p (f=h)\})\rightarrow \gamma(g(h))=1$.
        \end{center}
        Hence, by Theorem \ref{cuantales2} item 2.,  $\gamma(g(h))\leq (\sim\sim \gamma(g(h)))$ and by Theorem \ref{cuantales1} item 9., we have
        \begin{center}
        $(g(f)\cdot \{p\in \mathbb{P}: \mathcal{V}^{\mathbb{P}^*}_{\alpha}\Vdash_p (f=h)\})\rightarrow (\sim\sim\gamma(g(h)))=1$,
        \end{center}       
        but $g$ is a $\gamma$-regular function,
        \begin{center}
        $(g(f)\cdot \{p\in \mathbb{P}: \mathcal{V}^{\mathbb{P}^*}_{\alpha}\Vdash_p (f=h)\})\rightarrow g(h)=1$
        \end{center}
        which implies, by Theorem \ref{cuantales1} item 1., that 
        \begin{center}
            $g(f)\cdot \{p\in \mathbb{P}: \mathcal{V}^{\mathbb{P}^*}_{\alpha}\Vdash_p (f=h)\}\subseteq g(h)$
        \end{center}
        That is, $g$ is extensional.

    \end{proof}

    \begin{Coro}[\cite{Moncayo2023}, Corollary 3.2.30.]\label{phiiffdiamondphionmodels}
        If $\varphi$ is an $\mathcal{L}_{\in}$-sentence with no universal quantifiers, then $\varphi$ is valid in the model $R_{\alpha}^{\mathbb{H}}$ (that is $\ldbrack \varphi\rdbrack_{\alpha}^\mathbb{H}=1_{\mathbb{H}}$), if and only if, $\Diamond \varphi$ is valid in $(\mathbb{P}, \leq,  \delta, \Vdash, R_{\alpha}^{\mathbb{P}^*})$ (that is $R_{\alpha}^{\mathbb{P}^*}\Vdash_p \Diamond\varphi$ for every $p\in \mathbb{P}$.)
    \end{Coro}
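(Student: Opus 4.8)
The plan is to read off this corollary from the ``isomorphism'' Theorem~\ref{isomorphismresiduatedclassic} together with the identity $\gamma(\{p:\mathcal{A}\Vdash_p\varphi\})=\{p:\mathcal{A}\Vdash_p\Diamond\varphi\}$ supplied by Theorem~\ref{gammadiamond}. Set $A_\varphi:=\{p\in\mathbb{P}:\mathcal{V}^{\mathbb{P}^*}_{\alpha}\Vdash_p\varphi\}$, which is strongly hereditary and hence an element of $\mathbb{P}^*$. Since $\varphi$ is a sentence, the case $n=0$ of Theorem~\ref{isomorphismresiduatedclassic} gives $\ldbrack\varphi\rdbrack_{\alpha}^{\mathbb{H}}=|A_\varphi|$ in $\mathbb{H}=\mathbb{P}^*/\mathcal{F}_\gamma$. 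Thus the left-hand condition $\ldbrack\varphi\rdbrack_{\alpha}^{\mathbb{H}}=1_{\mathbb{H}}$ becomes $|A_\varphi|=1_{\mathbb{H}}$.

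Next I would recast $|A_\varphi|=1_{\mathbb{H}}$ as $\gamma$-density. Because $1_{\mathbb{H}}=|1_{\mathbb{P}^*}|=|\mathbb{P}|$, the equality $|A_\varphi|=1_{\mathbb{H}}$ means $A_\varphi\approx_{\mathcal{F}_\gamma}\mathbb{P}$. Here $A_\varphi\rightarrow 1_{\mathbb{P}^*}=1_{\mathbb{P}^*}\in\mathcal{F}_\gamma$ holds automatically, while $1_{\mathbb{P}^*}\rightarrow A_\varphi=A_\varphi$ by Theorem~\ref{cuantales1} item~3; so the relation $\approx_{\mathcal{F}_\gamma}$ reduces to $A_\varphi\in\mathcal{F}_\gamma$. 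By the definition of $\mathcal{F}_\gamma$ (Theorem~\ref{condensedisfilter}) this is precisely $\gamma(A_\varphi)=1_{\mathbb{P}^*}=\mathbb{P}$, i.e. $A_\varphi$ is $\gamma$-dense. Hence $\ldbrack\varphi\rdbrack_{\alpha}^{\mathbb{H}}=1_{\mathbb{H}}$ iff $\gamma(A_\varphi)=\mathbb{P}$.

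Finally, Theorem~\ref{gammadiamond} identifies $\gamma(A_\varphi)$ with $\{p\in\mathbb{P}:\mathcal{V}^{\mathbb{P}^*}_{\alpha}\Vdash_p\Diamond\varphi\}$. Therefore $\gamma(A_\varphi)=\mathbb{P}$ says exactly that $R_{\alpha}^{\mathbb{P}^*}\Vdash_p\Diamond\varphi$ for every $p\in\mathbb{P}$, i.e. that $\Diamond\varphi$ is valid in $(\mathbb{P},\leq,\delta,\Vdash,R_{\alpha}^{\mathbb{P}^*})$. Chaining the three equivalences yields the statement.

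I expect the result to be essentially a bookkeeping corollary, with no serious obstacle once Theorem~\ref{isomorphismresiduatedclassic} is available. The only point demanding care is the middle step: one must observe that having value $1$ in the \emph{quotient} Heyting algebra $\mathbb{H}$ corresponds to $\gamma$-\emph{density} in $\mathbb{P}^*$, and not to being all of $\mathbb{P}$. This is exactly where the modality enters: validity in the Heyting model forgets the difference between a strongly hereditary set and its $\gamma$-closure, and on the Kripke side that closure is realized by prefixing $\Diamond$, via Theorem~\ref{gammadiamond}.
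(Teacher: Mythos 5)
Your proposal is correct and follows essentially the same route as the paper: apply Theorem~\ref{isomorphismresiduatedclassic} to identify $\ldbrack\varphi\rdbrack_{\alpha}^{\mathbb{H}}$ with $|A_\varphi|$, translate $|A_\varphi|=1_{\mathbb{H}}$ into $\gamma(A_\varphi)=\mathbb{P}$, and finish with Theorem~\ref{gammadiamond}. The only cosmetic difference is the middle step, where the paper invokes Theorem~\ref{ABiffjAJB} together with $\gamma(\mathbb{P})=\mathbb{P}$ while you unpack the definition of $\approx_{\mathcal{F}_\gamma}$ directly; both justifications are valid.
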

    \begin{proof}
        We have that 
        \begin{align*}
            \ldbrack \varphi\rdbrack_{\alpha}^\mathbb{H}=1_{\mathbb{H}}&\mbox{ iff }\ldbrack \varphi\rdbrack_{\alpha}^\mathbb{H}=|\mathbb{P}| \hspace{0.5cm} &&\mbox{(by definition of $1_{\mathbb{H}}=1_{\mathbb{P}^*/\mathcal{F}_{\gamma}}$)} \\
            &\mbox{ iff }|\{p\in \mathbb{P}: \mathcal{V}^{\mathbb{P}^*}_{\alpha}\Vdash_p \varphi\}|=|\mathbb{P}|\hspace{0.5cm} &&\mbox{(by Theorem \ref{isomorphismresiduatedclassic})} \\
            & \mbox{ iff }\gamma\{p\in \mathbb{P}: \mathcal{V}^{\mathbb{P}^*}_{\alpha}\Vdash_p \varphi\}=\gamma (\mathbb{P}) \hspace{0.5cm} &&\mbox{(by  Theorem \ref{ABiffjAJB})} \\
            & \mbox{ iff }\{p\in \mathbb{P}: \mathcal{V}^{\mathbb{P}^*}_{\alpha}\Vdash_p \Diamond\varphi\}]=\mathbb{P}  \hspace{0.5cm} &&\mbox{(by Theorem \ref{gammadiamond} and since $\gamma$ is expansive)} 
        \end{align*}
    \end{proof}
        
\bibliographystyle{amsalpha}

\begin{thebibliography}{99}
\bibitem{BuPi2014} Dumitru Bu\c{s}neag and Dana Piciu, \textit{Some types of filters in residuated lattices}, Soft Computing - A Fusion of Foundations, Methodologies and Applications 18 (2014),
no. 5, 825–837.
\bibitem{Dilworth1939} Robert P. Dilworth, \textit{Non-Commutative Residuated Lattices}, Transactions of the American Mathematical Society 46 (1939), no. 3, 426–444.

\bibitem{Fitting1969} Melvin Fitting, \textit{Intuitionistic Logic}, Model Theory and Forcing, 1 ed., North Holland Pub. Co, Amsterdam, 1969.

\bibitem{Hohle1994} Ulrich H\"ohle, \textit{Monoidal Logic}, Fuzzy-Systems in Computer Science,
Vieweg+Teubner Verlag, 1994, pp. 233–243.

\bibitem{Lano1992a} Kevin C. Lano, \textit{Fuzzy sets and residuated logic}, Fuzzy Sets and Systems 47 (1992),
no. 2, 203 – 220.

\bibitem{MacCaull1996} Wendy MacCaull,\textit{ A note on Kripke semantics for residuated logic}, Fuzzy Sets and
Systems 77 (1996), no. 2, 229–234.

\bibitem{Moncayo2023} Jose Moncayo, \textit{Constructible sets in lattice-valued models}, (Masters dissertation)
Universidad Nacional de Colombia, Bogotá. (2023).

\bibitem{MoncayoZambrano2X2} Jose Moncayo and Pedro H. Zambrano, \textit{Constructible sets in lattice-valued models: A negative result}, preprint (2023), arxiv:2306.01734 .

\bibitem{Mulvey1986} Christopher Mulvey, \textit{\&}, Rendiconti del Circolo Matematico di Palermo 12 (1986),
no. 2, 99–104.

\bibitem{OnoKomori} Hiroakira Ono and Yuichi Komori, \textit{Logics without the contraction rule}, The Journal
of Symbolic Logic 50 (1985), no. 1, 169–201.

\bibitem{Ono1985} Hiroakira Ono, \textit{Semantical analysis of predicate logics without the contraction rule}, Studia Logica 44 (1985), no. 2, 187–196.

\bibitem{Rosenthal1990} Kimmo I. Rosenthal, \textit{Quantales and their applications}, Longman Scientific \& Technical, Essex, England, 1990.

\bibitem{Ward1938} Morgan Ward, \textit{Structure Residuation}, Annals of Mathematics 39 (1938), no. 3,
558–568.

\bibitem{WardDilworth1938} Morgan Ward and Robert P. Dilworth, \textit{Residuated Lattices}, Proceedings of the
National Academy of Sciences of the United States of America 24 (1938), no. 3,
162–164.

\end{thebibliography}

\end{document}